\newtheorem{theorem}{Theorem}[section]
\newtheorem{lemma}[theorem]{Lemma}
\newtheorem{proposition}[theorem]{Proposition}
\newtheorem{definition}[theorem]{Definition}
\newtheorem{remark}[theorem]{Remark}
\newtheorem{example}[theorem]{Example}
\newtheorem{question}[theorem]{Question}
\newcommand{\Z}{\mathbb{Z}}
\renewcommand{\ker}{\operatorname{Ker}}
\newcommand{\id}{\operatorname{id}}
\newcommand{\Sym}{\operatorname{Sym}}
\newcommand{\aut}{\operatorname{Aut}}
\newcommand{\soc}{\operatorname{Soc}}
\newcommand{\Aut}{\operatorname{Aut}}
\newcommand{\Ret}{\operatorname{Ret}}
\newcommand{\gr}{\operatorname{gr}}
\newenvironment{proof}{\par\noindent{\bf Proof.}}{$\qed$\par\bigskip}
\newcommand{\qed}{\enspace\vrule  height6pt  width4pt  depth2pt}
\begin{document}
\title{Constructing finite simple solutions of the Yang-Baxter equation\thanks{The first author was partially
supported by the grants MINECO-FEDER  MTM2017-83487-P and AGAUR
2017SGR1725 (Spain).  The second author is supported by the
National Science Centre  grant. 2016/23/B/ST1/01045 (Poland). 2010
MSC: Primary 16T25, 20B15, 20F16. Keywords: Yang-Baxter equation,
set-theoretic solution, primitive group, brace.} }
\author{F. Ced\'o \and J. Okni\'{n}ski}
\date{}

\maketitle

\begin{abstract}
We study involutive non-degenerate set-theoretic solutions $(X,r)$
of the Yang-Baxter equation on a finite set $X$. The emphasis is
on the case where $(X,r)$ is indecomposable, so  the associated
permutation group ${\mathcal G}(X,r)$ acts transitively on $X$.
One of the major problems is to determine how such solutions are
built from the imprimitivity blocks; and also how to characterize
these blocks. We focus on the case of so called simple solutions,
which are of key importance. Several infinite families of such
solutions are constructed for the first time. In particular, a
broad class of simple solutions of order $p^2$, for any prime $p$,
is completely characterized.
\end{abstract}

\section{Introduction}

The Yang-Baxter equation appeared independently in a paper of Yang
\cite{Yang} an in a paper of Baxter \cite{Baxter} and then it
became one of the important equations of mathematical physics. It
also appears in the foundations of quantum groups and Hopf
algebras (see for example \cite{BrownGoodearl,K}). Recall that a
solution of the Yang-Baxter equation is an automorphism $R$ of a
vector space $V\otimes V$,  such that
$$R_{12}R_{23}R_{12} = R_{23}R_{12}R_{23},$$
where  $R_{ij}$ denotes the
map $V\otimes V\otimes V \rightarrow V\otimes V\otimes V$ acting as
$R$ on the $(i,j)$ tensor factors and as the identity on the
remaining factor. A difficult and important open problem is to find all
the solutions of the Yang-Baxter
equation. Drinfeld in
\cite{drinfeld} suggested the study of the set-theoretic solutions
of the Yang-Baxter equation, these are pairs $(X,r)$, where $X$ is a non-empty set and $r : X
\times  X \rightarrow X \times X$ is a bijective map
such that
$$r_{12}r_{23}r_{12} = r_{23}r_{12}r_{23},$$
where  $r_{ij}$ denotes the map $X \times X \times X \rightarrow X
\times X \times X$ acting as $r$ on the $(i,j)$ components and as
the identity on the remaining component.

The papers of Gateva-Ivanova and Van den Bergh \cite{GIVdB}, and
Etingof, Schedler and Soloviev \cite{ESS} introduced a special
class of so called involutive non-degenerate set-theoretic
solutions of the Yang-Baxter equation. And the study of this
important class of solutions has been exploding in the last twenty
years, see for example
\cite{BCV,CCP,CPR,CJO,CJOComm,GI18,GIC,Rump1,V}. Recall that a
set-theoretic solution $r : X \times  X \rightarrow X \times  X$
of the Yang-Baxter equation, written in the form $r(x,y) =
(\sigma_x(y),\gamma_y(x))$, for $x,y \in X$, is involutive if
$r^{2} = \id$, and it is non-degenerate if $\sigma_x$ and
$\gamma_x$ are bijective maps from $X$ to $X$, for all $ x \in X$.

In \cite{ESS} and \cite{GIVdB} a number of very fruitful algebraic
structures were introduced to study this class of solutions: the
structure group, the structure monoid  and the structure algebra
associated to a solution. More recently, in \cite{R07}, Rump
introduced a new algebraic structure, called (left) brace, in this
context. This attracted a lot of attention (see for example
\cite{B16,Brz19,CCS,DS,GI18,Smok} and the references in these
papers). And this allowed to construct several new families of
solutions and led to a discovery of several unexpected connections
to a variety of other areas \cite{CedoSurvey, RumpSurvey}.

In the study of involutive non-degenerate set-theoretic solutions
of the Yang-Baxter equation, a very fruitful approach is based on
the notions of indecomposable solutions and irretractable
solutions \cite{ESS}. Roughly speaking, the underlying idea is to
show that several classes of solutions come from solutions of
smaller cardinality.

Every (involutive non-degenerate set-theoretic) solution $(X,r)$
of the YBE is equipped with a permutation group ${\mathcal
G}(X,r)$ acting on the set $X$. It has been expected for a long
time that a possible classification of solutions $(X,r)$ of the
YBE would have to be based on the associated  groups $\mathcal G
(X,r)\subseteq \Sym_{X}$. Therefore, it is natural to anticipate
that some aspects of the theory of permutation groups will be
crucial in this context. However, so far, mainly transitivity of
the group $\mathcal G (X,r)$ (equivalent to indecomposability of
the solution $(X,r)$) has been explored. This has led to important
results on decomposability of solutions, but on the other hand it
turned out that in general indecomposable solutions are difficult
to construct and classify
\cite{CCP,CPR,Jedl-Pilit-Zam,rump2020,SmokSmok}. In particular, a
fundamental result of Rump \cite{Rump1} shows that all finite
square-free involutive non-degenerate set-theoretic solutions
$(X,r)$, with $|X|>1$, are decomposable. However, this is no
longer true in full generality.

The second approach, based on the retract relation, allowed to
introduce the class of multipermutation solutions and to define
the multipermutation level, which is a measure of their
complexity. It is related to the so called structure group
$G(X,r)$ (which is  a Bieberbach group if $X$ is finite)
associated to each solution $(X,r)$. Certain positive results in
this direction were obtained in \cite{CJO,CJOComm,GIC} and later
in \cite{BCV} it was shown that finite multipermutation solutions
coincide with solutions whose structure groups are poly-${\mathbb
Z}$-groups. However, this approach also fails in full generality
because there exist solutions that are not retractable.

In a talk \cite{BB} during a workshop in Oberwolfach
\cite{Ballesteros}, Ballester-Bolinches asked for a description of
all finite primitive solutions, i.e. solutions such that $\mathcal
G (X,r)$ acts on $X$ as a primitive permutation group. It was
recently shown in \cite{CJOprimit} that there is only one class of
such solutions of cardinality $>1$. Namely, every finite primitive
solution $(X,r)$ is of prime order, i.e. $|X|=p$ is a prime
number, and it is a so called permutation solution determined by a
cyclic permutation of length $p$. This opens a new perspective on
the classification problem of all finite solutions because,
roughly speaking, this shows that every finite solution $(X,r)$
which is not of the above form is built on an information coming
from its imprimitivity blocks, which are sets of smaller
cardinality. And now the challenge is to understand the structure
of possible imprimitivity blocks of an arbitrary solution and to
see how it can be built from its imprimitivity blocks. On the
other hand, recent results from \cite{CCP} show that every finite
indecomposable solution of the YBE is a so called dynamical
extension (introduced by Vendramin in \cite{V}) of a simple
solution. The latter are defined as solutions $(X,r)$ that do not
admit any nontrivial epimorphism of solutions $(X,r)\rightarrow
(Y,r')$ (meaning that $1<|Y|<|X|$). For example, all finite
primitive solutions are simple. This has been, till now, the only
known infinite family of finite simple solutions. So the challenge
is to construct and classify all simple solutions. This is our
main motivation in this paper.

The paper is organized as follows. In Section \ref{prelim} we
introduce the necessary background on involutive non-degenerate
set-theoretic solutions of the Yang-Baxter equation, left braces
and on cycle-sets, another structure introduced by Rump in
\cite{Rump1} to study this class of solutions. In Section
\ref{indecomp}, we expose the recent advances in the study of
indecomposable solutions and we introduce simple solutions. In
Section \ref{examples} some properties of simple solutions are
studied. For example, we prove that finite simple solutions are
indecomposable and irretractable (if their order is not a prime).
Then we focus on solutions of a special but quite general
type (Proposition~\ref{sec6newind}), which allows us to construct
infinite families of finite simple solutions of the Yang-Baxter
equation, in Theorems~\ref{newexample2}, \ref{newexample2bis} and
\ref{simplemn}. In particular, we show that for every positive
integer $n$, distinct primes $p_1,\dots ,p_n$ and positive
integers $m_1,\dots ,m_n$, such that $\sum_{i=1}^{n}m_i>n$, there
exists a simple solution of the Yang-Baxter equation of
cardinality $p_1^{m_1}\cdots p_n^{m_n}$. In Section \ref{sec5} a
wide class of simple solutions of cardinality $p^2$, for every
prime $p$, is characterized; see Theorems~\ref{newexample} and
\ref{thmmain}. In Section \ref{section6} we give an alternative
construction of a family of simple solutions of square cardinality
introduced in Section \ref{examples}, in terms of the asymmetric
product of left braces. Finally, in Section \ref{sec7} some open
questions related to simple solutions are proposed.

\section{Preliminaries} \label{prelim}
Let $X$ be a non-empty set and  let  $r:X\times X \rightarrow
X\times X$ be a map. For $x,y\in X$ we put $r(x,y) =(\sigma_x (y),
\gamma_y (x))$. Recall that $(X,r)$ is an involutive,
non-degenerate, set-theoretic solution of the Yang-Baxter equation
if $r^2=\id$, all the maps $\sigma_x$ and $\gamma_y$ are bijective
maps from $X$ to itself and
  $$r_{12} r_{23} r_{12} =r_{23} r_{12} r_{23},$$
where $r_{12}=r\times \id_X$ and $r_{23}=\id_X\times r$ are maps
from $X^3$ to itself. Because $r^{2}=\id$, one easily verifies that
$\gamma_y(x)=\sigma^{-1}_{\sigma_x(y)}(x)$, for all $x,y\in X$ (see
for example \cite[Proposition~1.6]{ESS}).

\bigskip
\noindent {\bf Convention.} Throughout the paper a solution of the
YBE will mean an involutive, non-degenerate, set-theoretic
solution
of the Yang-Baxter equation.\\

To study this class of solutions of the YBE, Rump \cite{R07}
introduced the algebraic structure called a left brace. We recall
some essential background (see \cite{CedoSurvey} for details). A
left brace is a set $B$ with two binary operations, $+$ and
$\circ$, such that $(B,+)$ is an abelian group (the additive group
of $B$), $(B,\circ)$ is a group (the multiplicative group of $B$),
and for every $a,b,c\in B$,
 \begin{eqnarray} \label{braceeq}
  a\circ (b+c)+a&=&a\circ b+a\circ c.
 \end{eqnarray}
Note that if we denote by $0$ the neutral element of $(B,+)$ and by
$1$ the neutral element of $(B,\circ)$, then
$$1=1\circ (0+0)+1=1\circ 0+1\circ 0=0.$$
In any left brace $B$ there is an action $\lambda\colon
(B,\circ)\rightarrow \aut(B,+)$, called the lambda map of $B$,
defined by $\lambda(a)=\lambda_a$ and $\lambda_{a}(b)=-a+a\circ b$,
for $a,b\in B$. We shall write $a\circ b=ab$, for all $a,b\in B$.
A trivial brace is a left brace $B$ such that $ab=a+b$, for all
$a,b\in B$, i.e. all $\lambda_a=\id$. The socle of a left brace
$B$ is
$$\soc(B)=\{ a\in B\mid ab=a+b, \mbox{ for all
}b\in B \}.$$ Note that $\soc(B)=\ker(\lambda)$, and thus it is a
normal subgroup of the multiplicative group of $B$. The solution of
the YBE associated to a left brace $B$ is $(B,r_B)$, where
$r_B(a,b)=(\lambda_a(b),\lambda_{\lambda_a(b)}^{-1}(a))$, for all
$a,b\in B$ (see \cite[Lemma~2]{CJOComm}).

A left ideal of a left brace $B$ is a subgroup $L$ of the additive
group of $B$ such that $\lambda_a(b)\in L$, for all $b\in L$ and
all $a\in B$. An ideal of a left brace $B$ is a normal subgroup
$I$ of the multiplicative group of $B$ such that $\lambda_a(b)\in
I$, for all $b\in I$ and all $a\in B$. Note that
\begin{eqnarray}\label{addmult1}
ab^{-1}&=&a-\lambda_{ab^{-1}}(b)
\end{eqnarray}
 for all $a,b\in B$, and
    \begin{eqnarray} \label{addmult2}
     &&a-b=a+\lambda_{b}(b^{-1})= a\lambda_{a^{-1}}(\lambda_b(b^{-1}))= a\lambda_{a^{-1}b}(b^{-1}),
     \end{eqnarray}
for all $a,b\in B$. Hence, every left ideal $L$ of $B$ also is a
subgroup of the multiplicative group of $B$, and every  ideal $I$ of
a left brace $B$ also is a subgroup of the additive group of $B$. For example, it
is known that $\soc(B)$ is an ideal of the left brace $B$ (see
\cite[Proposition~7]{R07}).  Note that $B/I$ inherits a natural left brace structure.

Let $B$ be a left brace. We define another binary operation $*$ on $B$ by
$$a*b\colon =-a+ab-b=\lambda_a(b)-b,$$
for all $a,b\in B$. By \cite[Corollary of Proposition 6]{R07},
the subgroup $B^2=B*B=\gr(a*b\mid a,b\in B)_+$ of $(B,+)$ generated by all the elements of the form $a*b$ is an ideal of $B$.

Recall that if $(X,r)$ is a solution of the YBE, with
$r(x,y)=(\sigma_x(y),\gamma_y(x))$, then its structure group
$G(X,r)=\gr(x\in X\mid xy=\sigma_x(y)\gamma_y(x),\mbox{ for all
}x,y\in X)$ has a natural structure of a left brace such that
$\lambda_x(y)=\sigma_x(y)$, for all $x,y\in X$. The additive group
of $G(X,r)$ is the free abelian group with basis $X$. The
permutation group $\mathcal{G}(X,r)=\gr(\sigma_x\mid x\in X)$ of
$(X,r)$ is a subgroup of the symmetric group $\Sym_X$ on $X$.  The
map $x\mapsto \sigma_x$, from $X$ to $\mathcal{G}(X,r)$ extends to a
group homomorphism $\phi: G(X,r)\longrightarrow \mathcal{G}(X,r)$
and $\ker(\phi)=\soc(G(X,r))$. Hence there is a unique structure of
a left brace on $\mathcal{G}(X,r)$ such that $\phi$ is a homomorphism
of left braces, this is the natural structure of a left brace on
$\mathcal{G}(X,r)$.

\begin{lemma}\label{lambda}
\cite[Lemma 2.1]{CJOprimit}
Let $(X,r)$ be a solution of the YBE. Then
$\lambda_g(\sigma_x)=\sigma_{g(x)}$, for all $g\in \mathcal{G}(X,r)$
and all $x\in X$.
\end{lemma}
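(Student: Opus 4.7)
The plan is to reduce the statement to the case of a single generator $g=\sigma_z$ by induction on the length of $g$ as a word in the generating set $\{\sigma_z : z\in X\}$ of $\mathcal{G}(X,r)$, and then settle that base case by transporting the identity $\lambda_z(x)=\sigma_z(x)$, which holds by construction in the structure brace $G(X,r)$, through the brace epimorphism $\phi: G(X,r)\to\mathcal{G}(X,r)$ described in the preliminaries.

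For the base case $g=\sigma_z$, I would argue as follows. Since $\phi$ is a homomorphism of left braces with $\phi(y)=\sigma_y$ for every $y\in X$, it intertwines the lambda actions, so
\[
\lambda_{\sigma_z}(\sigma_x)=\lambda_{\phi(z)}(\phi(x))=\phi(\lambda_z(x))=\phi(\sigma_z(x))=\sigma_{\sigma_z(x)},
\]
using that $\lambda_z(x)=\sigma_z(x)\in X$ inside $G(X,r)$. To cover inverse generators, I would invoke that $\lambda:(\mathcal{G}(X,r),\circ)\to\aut(\mathcal{G}(X,r),+)$ is a group homomorphism, so $\lambda_{\sigma_z^{-1}}=\lambda_{\sigma_z}^{-1}$; substituting $x=\sigma_z(y)$ in the identity just derived and then applying $\lambda_{\sigma_z}^{-1}$ yields $\lambda_{\sigma_z^{-1}}(\sigma_{\sigma_z(y)})=\sigma_y$, which is the desired identity for $g=\sigma_z^{-1}$ evaluated at $\sigma_z(y)$.

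The inductive step is then routine: writing $g=g_1\circ h$ with $h=\sigma_z^{\pm 1}$ a single generator, the homomorphism property $\lambda_g=\lambda_{g_1}\circ\lambda_h$ and the inductive hypothesis give
\[
\lambda_g(\sigma_x)=\lambda_{g_1}\bigl(\lambda_h(\sigma_x)\bigr)=\lambda_{g_1}(\sigma_{h(x)})=\sigma_{g_1(h(x))}=\sigma_{g(x)},
\]
where the second-to-last equality uses the inductive hypothesis on the shorter word $g_1$ applied at the point $h(x)\in X$.

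The only delicate point is the base case, and even there no real obstacle arises: it is essentially a bookkeeping exercise verifying that the natural brace structure on $\mathcal{G}(X,r)$ is compatible with its tautological action on $X$. Everything else is formal manipulation using that $\lambda$ is a multiplicative homomorphism and that $\phi$ is a brace morphism, both of which are recorded in the preliminaries.
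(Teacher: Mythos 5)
Your argument is correct: the base case follows from the fact that the brace morphism $\phi\colon G(X,r)\to\mathcal{G}(X,r)$ intertwines the lambda maps together with $\lambda_z(x)=\sigma_z(x)\in X$ in $G(X,r)$, and the extension to inverses and to arbitrary words uses only that $\lambda$ is a homomorphism from the multiplicative group to $\aut(\mathcal{G}(X,r),+)$. The paper itself offers no proof of this lemma (it is quoted from \cite[Lemma 2.1]{CJOprimit}), and your derivation is the standard one that the cited reference uses, so there is nothing to flag.
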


 The following result is well known (see \cite[Proposition
7]{R07}), but we give a new proof for the convenience of the
reader.
\begin{lemma}\label{known}
    Let $B$ be a left brace. Then $B/\soc(B)\cong \mathcal{G}(B,r_B)$ as left braces.
\end{lemma}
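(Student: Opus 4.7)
The plan is to produce a natural map $\mu\colon B\to \mathcal{G}(B,r_B)$, $a\mapsto \lambda_a$, prove it is a brace epimorphism with kernel $\soc(B)$, and then apply the first isomorphism theorem for left braces.

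The multiplicative and set-theoretic parts are essentially free. Since $\lambda$ is a group homomorphism from $(B,\circ)$ to $\aut(B,+)$, we get $\mu(ab)=\lambda_a\lambda_b=\mu(a)\mu(b)$. As $\mathcal{G}(B,r_B)$ is generated by the $\sigma_x=\lambda_x$ and $\mu(B)$ is already a subgroup, $\mu$ is surjective; and $\ker\mu=\{a\in B:\lambda_a=\id\}=\soc(B)$ holds by definition of the socle.

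The main step is to verify that $\mu$ respects addition. For this I would first establish the intertwining identity
\[
\mu(\lambda_a(b)) \;=\; \lambda_{\mu(a)}(\mu(b)) \qquad (a,b\in B),
\]
where on the right $\lambda$ denotes the lambda map of the brace $\mathcal{G}(B,r_B)$. This is exactly Lemma~\ref{lambda} applied to the solution $(B,r_B)$ with $g=\lambda_a$ and $x=b$, using that $\sigma_x=\lambda_x$ in this setting. Next, the relation $\lambda_a(b)=-a+ab$ rearranges to the identity $a+b=a\circ\lambda_a^{-1}(b)$, which is valid in any left brace (apply it both in $B$ and in $\mathcal{G}(B,r_B)$). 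Applying $\mu$ and combining multiplicativity with the intertwining just obtained,
\[
\mu(a+b) \;=\; \mu(a)\circ\mu(\lambda_a^{-1}(b)) \;=\; \mu(a)\circ\lambda_{\mu(a)}^{-1}(\mu(b)) \;=\; \mu(a)+\mu(b),
\]
the last equality being the same rearrangement carried out inside $\mathcal{G}(B,r_B)$. Hence $\mu$ is a brace homomorphism, and the first isomorphism theorem yields $B/\soc(B)\cong \mathcal{G}(B,r_B)$ as left braces.

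The one point requiring care is notational: the symbol $\lambda$ denotes two a priori different maps, one for $B$ and one for the brace $\mathcal{G}(B,r_B)$ (whose brace structure is the one inherited from the quotient $G(B,r_B)/\soc(G(B,r_B))$ via $\phi$). Once Lemma~\ref{lambda} is invoked to link the two, the rest reduces to manipulating a single brace identity, so no further obstacle remains.
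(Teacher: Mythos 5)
Your proof is correct, but it takes a genuinely different route from the paper's. The paper works ``from above'': using the presentation of the structure group $G(B,r_B)$ and the identity $\lambda_a(b)\lambda^{-1}_{\lambda_a(b)}(a)=ab$, it produces a brace homomorphism $\varphi\colon G(B,r_B)\to B$ that is the identity on generators, proves $\varphi^{-1}(\soc(B))=\soc(G(B,r_B))$ (the nontrivial inclusion uses that the additive group of $G(B,r_B)$ is free abelian on $B$, so that $\lambda_g$ fixing all of $B$ forces $\lambda_g=\id$ on $G(B,r_B)$), and then invokes the general isomorphism $\mathcal{G}(X,r)\cong G(X,r)/\soc(G(X,r))$. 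You instead work ``from below'' with the direct map $\mu\colon a\mapsto\lambda_a$, where the only nontrivial point is additivity; your reduction of that to the intertwining identity $\mu(\lambda_a(b))=\lambda_{\mu(a)}(\mu(b))$ via Lemma~\ref{lambda} (applied to $(B,r_B)$, where $\sigma_b=\lambda_b$) together with the rearrangement $a+b=a\circ\lambda_a^{-1}(b)$ is valid, and the identification of $\ker\mu$ with $\soc(B)=\ker(\lambda)$ and the surjectivity argument are both fine. Your approach avoids the structure group entirely and is more self-contained at the level of this lemma, at the cost of importing Lemma~\ref{lambda} (itself proved via the structure group in the cited reference) and of explicitly invoking the first isomorphism theorem for left braces, which is standard; the paper's approach buys the result as a corollary of facts about $G(X,r)$ already set up in the preliminaries. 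You correctly flag the one delicate point, namely that two different lambda maps are in play and that Lemma~\ref{lambda} is precisely what links them.
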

\begin{proof}  By the definition of the lambda map of $B$,
we have $$\lambda_a(b)\lambda^{-1}_{\lambda_a(b)}(a)=\lambda_a(b)+\lambda_{\lambda_a(b)}(\lambda^{-1}_{\lambda_a(b)}(a))=\lambda_a(b)+a=ab,$$
    for all $a,b\in B$.  Hence,
    by the definition of the group $G(B,r_B)$,
    there is a homomorphism of left braces
    $\varphi\colon G(B,r_B)\longrightarrow B$, such that $\varphi(a)=a$,
    for all $a\in B$.
    Let $g\in \varphi^{-1}(\soc(B))$. Then $\lambda_{\varphi
    (g)}(b) =b$ for $b\in B$. In $G(B,r_B)$ we
    have that $\lambda_g(b)\in B$. Hence
    $$b=\lambda_{\varphi
    (g)}(b)= \varphi(g)b-\varphi(g)=\varphi(g)\varphi(b)-\varphi(g)=\varphi(gb-g)=
    \varphi(\lambda_g(b))=\lambda_g(b),$$
    for all $b\in B$. Since the additive group of $G(B,r_B)$ is the free
    abelian group with basis $B$ and $\lambda_g\in \Aut(G(B,r_B),+)$, we get that
    $\lambda_g(h)=h$, for all $h\in G(B,r_B)$. Therefore
    $g \in \soc(G(B,r_B))$  and hence
    $\varphi^{-1}(\soc(B))=\soc(G(B,r_B))$ follows.
    Thus
    $$\mathcal{G}(B,r_B)\cong G(B,r_B)/\soc(G(B,r_B))\cong B/\soc(B),$$
    as left braces.
\end{proof}

Let $(X,r)$ and $(Y,s)$ be solutions of the YBE. We write
$r(x,y)=(\sigma_x(y),\gamma_y(x))$ and
$s(t,z)=(\sigma'_t(z),\gamma'_z(t))$, for all $x,y\in X$ and $t,z\in
Y$. A homomorphism of solutions $f\colon (X,r)\longrightarrow (Y,s)$
is a map $f\colon X\longrightarrow Y$ such that
$f(\sigma_x(y))=\sigma'_{f(x)}(f(y))$ and
$f(\gamma_y(x))=\gamma'_{f(y)}(f(x))$, for all $x,y\in X$. Since
$\gamma_y(x)=\sigma^{-1}_{\sigma_x(y)}(x)$ and
$\gamma'_z(t)=(\sigma')^{-1}_{\sigma'_t(z)}(t)$, it is clear that
$f$ is a homomorphism of solutions if and only if
$f(\sigma_x(y))=\sigma'_{f(x)}(f(y))$, for all $x,y\in X$.

Note that every homomorphism of solutions $f\colon (X,r)\longrightarrow (Y,s)$ extends to a homomorphism of left braces
$f\colon G(X,r)\longrightarrow G(Y,s)$ that we also denote by $f$, and induces a homomorphism of left braces $\bar f\colon \mathcal{G}(X,r)\longrightarrow\mathcal{G}(Y,s)$.

In \cite{ESS}, Etingof, Schedler and Soloviev introduced the retract
relation on solutions  $(X,r)$ of the YBE. This is the binary
relation $\sim$ on $X$ defined by $x\sim y$ if and only if
$\sigma_x=\sigma_y$. Then, $\sim$ is an equivalence relation and $r$
induces a solution $\overline{r}$ on $\overline{X}=X/{\sim}$. The
retract of the solution $(X,r)$ is
$\Ret(X,r)=(\overline{X},\overline{r})$. Note that the natural map
$f\colon X\longrightarrow \overline{X}:x\mapsto \bar x$ is an
epimorphism of solutions from $(X,r)$ onto $\Ret(X,r)$.

Define $\Ret^{1}(X,r)=\Ret(X,r)$ and for every positive integer $n$, $\Ret^{n+1}(X,r)=\Ret(\Ret^{n}(X,r))$.
The solution $(X,r)$ is   said to be  a multipermutation solution of level $n$ if $n$ is the smaller positive integer such that $\Ret^{n}(X,r)$ has cardinality $1$.
Recall that a solution $(X,r)$ is  said to be irretractable if
$\sigma_x\neq \sigma_y$ for all distinct elements $x,y\in X$, that is $(X,r)=\Ret(X,r)$,
otherwise the solution $(X,r)$ is retractable.

Another algebraic structure introduced by Rump \cite{Rump1} to study solutions of the YBE is
the structure of a left cycle set. Recall that a left cycle set is a pair $(X,\cdot)$ of a non-empty set $X$ and a binary operation $\cdot$ such that
$$(x\cdot y)\cdot(x\cdot z)=(y\cdot x)\cdot(y\cdot z),$$
for all $x,y,z\in X$, and the map $y\mapsto x\cdot y$ is a
bijective map from $X$ to itself for every $x\in X$. A left cycle
set $(X,\cdot)$ is non-degenerate if the map $x\mapsto x\cdot x$
is a bijective map from $X$ to itself. In \cite[Propositions 1 and
2]{Rump1} it is proven that there is a bijective correspondence
between non-degenerate left cycle sets and solutions of the YBE.
That is, if $(X,\cdot)$ is a non-degenerate left cycle set, and
$\sigma_x$ is the inverse of the map $y\mapsto x\cdot y$, then
$(X,r)$ is the corresponding solution of the YBE, where
$r(x,y)=(\sigma_x(y), \sigma_x(y)\cdot x)$, for all $x,y\in X$.

In \cite{V} Vendramin introduced the dynamical extension of a
cycle set. Let $I$ be a left cycle set and let $S$ be a non-empty
set. A map $\alpha\colon I\times I\times S \longrightarrow \Sym_S$
is a dynamical cocycle of $I$ with values in $S$ if, for all $i, j, k\in I$ and $r, s, t \in S$,
$$\alpha_{(i\cdot j,i\cdot k)}(\alpha_{(i,j)}(r, s),\alpha_{(i,k)}(r, t)) = \alpha_{(j\cdot i,j\cdot k)}(\alpha_{(j,i)}(s, r),\alpha_{(j,k)}(s, t)),$$
where $\alpha_{(i,j)}(r,s)\colon\!\!\! =\alpha(i,j,r)(s)$. The
dynamical extension of $I$ by $\alpha$ is the left cycle set
$S\times_{\alpha} I\colon\!\!\! = (S \times I,\cdot)$, where
$$(s, i) \cdot (t, j) \colon\!\!\! = (\alpha_{(i,j)}(s, t), i \cdot  j),$$
for all $i, j\in I$ and $s, t \in S$.

\section{Indecomposable and simple solutions} \label{indecomp}

Let $(X,r)$ be a solution of the YBE. We say that $(X,r)$ is indecomposable if $\mathcal{G}(X,r)$ acts transitively on $X$.

The following definition is due to Adolfo Ballester-Bolinches
\cite{BB}.
\begin{definition}  \label{primitive}
A finite solution $(X,r)$ of the YBE is said to be primitive if its
permutation group $\mathcal{G}(X,r)$ acts primitively on $X$.
\end{definition}

By \cite[Theorem 2.13]{ESS}, for each prime $p$, there is, up to isomorphism,  a unique
indecomposable solution $(X,r)$ of the YBE of cardinality $p$. In
this case,  $X=\Z/(p)$ and $\sigma_i(j)=j+1$, for all $i,j\in
\Z/(p)$. Thus $\mathcal{G}(X,r)\cong \Z/(p)$, and $(X,r)$ is
primitive and it is a multipermutation solution of level $1$. In
\cite{CJOprimit} it is proven that these are all the finite
primitive solutions of the YBE of
cardinality $>1$.

We say that a finite indecomposable solution $(X,r)$ of the
YBE has {\em primitive level} $k$ if $k$ is the biggest positive
integer such that there exist solutions $(X_1,r_1)=(X,r),\,
(X_2,r_2),\dots ,(X_k,r_k)$ and epimorphisms of solutions $p_{i+1}\colon
(X_i,r_i)\longrightarrow (X_{i+1},r_{i+1})$, with $|X_i|>|X_{i+1}|>1$, for $1\leq
i\leq k-1$, and $(X_k,r_k)$ is primitive.

\begin{question} Describe solutions of primitive level $2$. As said above, solutions of primitive level $1$ (in other words, primitive solutions) admit a very simple description.
\end{question}

In \cite[Theorems 5.3 and 5.4]{SmokSmok} Agata Smoktunowicz and
Alicja Smoktunowicz gave  a method to construct any finite
indecomposable solution of the YBE, assuming that one is able to
construct all finite left braces. Indeed, let $B$ be a finite left
brace and let $x\in B$. Consider the left subbrace $B(x)$
generated by $x$. Let $X=\{\lambda_a(x)\mid a\in B(x)\}$. Then
$(X,r)$ is indecomposable, where $r$ is the restriction of $r_B$
to $X\times X$, and $(B,r_B)$ is the solution associated to the
left brace $B$. Furthermore, every finite indecomposable solution
of the YBE is constructed in this way.
However, it is difficult to describe the
structure of $B(x)$ and the properties of the indecomposable
solution $(X,r)$. For example, it is unclear how one can choose
$B$ and $x\in B$ such that $(X,r)$ is irretractable.   In
\cite[Section 6]{CSV} it is proven that this method also is valid
for infinite indecomposable solutions.

In \cite{rump2020} Rump gave another method to construct
indecomposable solutions of the YBE. Let $(X,r)$ be an
indecomposable finite solution of the YBE. Let
$\mathcal{G}=\mathcal{G}(X,r)$ and let $x\in X$. Let
$c_{x,\mathcal{G}}\colon \mathcal{G}\times
\mathcal{G}\longrightarrow \mathcal{G}\times \mathcal{G}$ be the map
defined by
$$c_{x,\mathcal{G}}(g,h)=(h\sigma^{-1}_{g^{-1}(x)}, g\sigma_{\sigma_{g^{-1}(x)}h^{-1}(x)}),$$
for all $g,h\in \mathcal{G}$. Then $(\mathcal{G},c_{x,\mathcal{G}})$
is an indecomposable finite solution of the YBE. Furthermore, the
map $p\colon \mathcal{G}\longrightarrow X$ defined by
$p(g)=g^{-1}(x)$ is an epimorphism of solutions,
$\mathcal{G}(\mathcal{G},c_{x,\mathcal{G}})\cong \mathcal{G}$ and
for every indecomposable finite solution $(Y,s)$ of the YBE and
every epimorphism $q\colon Y\longrightarrow X$ of solutions such
that the induced homomorphism of groups
$\mathcal{G}(Y,s)\longrightarrow \mathcal{G}$ is an isomorphism,
there exists a unique epimorphism of solutions $q'\colon
\mathcal{G}\longrightarrow Y$ such that $p= qq'$. Rump uses cycle
sets and $p$ is called the universal covering. The fundamental group
of the cycle set corresponding to the solution $(X,r)$ is the
stabilizer of $x$, i.e.
$$\pi_1(X)=\pi_1(X,x)=\{g\in \mathcal{G}\mid g(x)=x\}.$$
Then Rump presents a method to construct all the finite
indecomposable solutions $(Z,t)$ of the YBE such that the natural
structure of left brace of $\mathcal{G}(Z,t)$ is isomorphic to the
left brace $\mathcal{G}$. Thus this yields another method to
construct all the finite indecomposable solutions, but again
assuming that one is able to construct all finite left braces.

We now introduce the key notion studied in this paper.
\begin{definition}
A solution $(X,r)$ of the YBE is simple if $|X|>1$ and for every epimorphism of $f:(X,r) \rightarrow (Y,s)$ of solutions either $f$ is an isomorphism or $|Y|=1$.
\end{definition}
\begin{remark}\label{remsimple}
In \cite{V}, Vendramin introduced finite simple cycle sets. His
definition does not coincide with the above definition of
simplicity, but for finite indecomposable cycle sets both
definitions coincide by \cite[Proposition 2]{CCP}.
\end{remark}

Suppose that $(X,r)$ is an indecomposable finite solution of the
YBE which is not simple. Thus there exists a solution $(Y,s)$  of
the YBE and an epimorphism $p\colon X\longrightarrow Y$ of
solutions such that $1<|Y|<|X|$. Note that $(Y,s)$ also is
indecomposable.  In
    \cite[Proposition 2]{CCP} Castelli, Catino and Pinto proved that
    the cycle set corresponding to $(X,r)$ is a dynamical extension of
    the cycle  set $(Y,\cdot)$ corresponding to $(Y,s)$  by a
    dynamical cocycle $\alpha$ of the cycle set $(Y,\cdot)$ with
    values in $p^{-1}(y)$, for some $y\in Y$.  We also say that the solution
    $(X,r)$ is a dynamical extension of the solution $(Y,s)$. By
    \cite[Lemma 1]{CCP}, $|p^{-1}(y)|=|p^{-1}(y')|$ for all $y,y'\in Y$,
    i.e. $p$ is a covering of the corresponding cycle sets.
 Furthermore, in the proof of \cite[Lemma 1]{CCP} it is
proved that $g(p^{-1}(p(x)))=p^{-1}(p(g(x)))$, for all $x\in X$  and
$g\in \mathcal{G}(X,r)$. Hence $\{ p^{-1}(y)\mid y\in Y\}$ is a set
of imprimitivity blocks of $X$ under the action of
$\mathcal{G}(X,r)$. For every $y\in Y$, let $H_y$ be the  following
subgroup of $\mathcal{G}(X,r)$
$$H_y=\{g\in \mathcal{G}(X,r)\mid g(p^{-1}(y))=p^{-1}(y)\}.$$
Since $(X,r)$ is indecomposable, $H_y$ acts transitively on
$p^{-1}(y)$ (see \cite[Theorem 7]{CCP}). In \cite{CCP} there are
some concrete examples of indecomposable solutions constructed in
this way (using dynamical extensions of indecomposable cycle sets).
\cite[Proposition 10]{CCP} gives a criterion to check whether a
dynamical extension of a cycle set is irretractable.

Since every finite indecomposable solution of the
YBE is a dynamical extension of a simple solution of the YBE,
another strategy to construct all the finite indecomposable
solutions is to construct all the finite simple solutions of the YBE
and to determine all the dynamical cocycles on every cycle set
corresponding to a finite simple solution of the YBE.

We define the composition
    length of a finite indecomposable solution $(X,r)$  as the biggest
    positive integer $k$ such that there exist solutions
    $(X_1,r_1)=(X,r),\, (X_2,r_2),\dots ,(X_k,r_k)$ and epimorphisms of solutions
    $p_{i+1}\colon (X_i,r_i)\longrightarrow (X_{i+1},r_{i+1})$, with
    $|X_i|>|X_{i+1}|$, for $1\leq i\leq k-1$, and $(X_k,r_k)$ is
    simple.

From a computer calculation one obtains that the
following two examples are the only  solutions on a set of
cardinality four that are indecomposable and irretractable.

\begin{example}  \label{examp1}
Let $X=\{1,2,3,4\}$. Define permutations
$$\sigma_1 = (2,3),\ \sigma_2= (1,4) , \ \sigma_3 = (1,2,4,3) ,\
\sigma_4 =(1,3,4,2)\in \Sym_X.$$ Then $(X,r)$ is a solution of the
YBE, with $r(x,y)=(\sigma_x(y),\sigma_{\sigma_x(y)}^{-1}(x))$, for
all $x,y\in X$. Notice that $\mathcal{G}(X,r)= \langle \sigma_1 ,
\sigma_2 , \sigma_3 , \sigma_4 \rangle $ is isomorphic to the
dihedral group of order $8$. This is an example of an
indecomposable and irretractable solution, see \cite{JO} or
Example~8.2.14 in \cite{JObook}; actually the first known example
of a solution whose structure group is not a poly-$\mathbb{Z}$
group. It is clear that $\mathcal{G}(X,r)$ acts transitively on
$X$ and $X_{1} =\{ 1,4\}$ and $X_{2} = \{2,3\}$ form imprimitivity
blocks for the action of the group $\mathcal{G}(X,r)$ on $X$.
\end{example}

 The second example is as follows.

\begin{example}  \label{examp2}
Let $X=\{ 1,2,3,4\}$. Let
$$\sigma_1 = (1,2), \  \sigma_2 = (3,1,4,2), \  \sigma_3 =
(2,4,1,3), \  \sigma_4 =(3,4).$$ It is easy to check that  $(X,r)$
is an indecomposable and irretractable solution of the YBE, with
$r(x,y)=(\sigma_x(y),\sigma_{\sigma_x(y)}^{-1}(x))$, for all
$x,y\in X$. Also in this case
$\mathcal{G}(X,r)\cong D_4$. The imprimitivity blocks for the action of $\mathcal{G}(X,r)$ on $X$ are
$\{1,2\}$ and $\{3,4\}$.   This solution is not isomorphic to the
first one because we have $\sigma_i(i)\neq i$, for all $i\in\{
1,2,3,4\}$, while this is not true in Example~\ref{examp1}.
\end{example}

In fact, we will see that these two solutions of cardinality four
are the only simple solutions of the YBE of this cardinality (see Remark \ref{4simple}).

\section{Examples of simple solutions}  \label{examples}

 Note that every indecomposable solution of the YBE of prime
cardinality is simple. And these are the finite primitive
solutions of cardinality bigger than $1$.

In this section we construct new examples of finite simple solutions of the YBE.
First, we study some properties of this class of solutions.

\begin{lemma}\label{1}
    Let $(X,r)$ be  a simple solution of the YBE. If $|X|>2$ then $(X,r)$ is indecomposable.
\end{lemma}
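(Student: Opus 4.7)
The plan is to argue by contradiction: assume $(X,r)$ is simple with $|X|>2$ but that $\mathcal{G}(X,r)$ is not transitive on $X$. Then $X$ decomposes into at least two $\mathcal{G}(X,r)$-orbits, and I would try to produce from this decomposition a proper nontrivial epimorphism of solutions, contradicting simplicity. The natural candidate is the quotient $f:X\to Y:=X/\mathcal{G}(X,r)$ together with the flip solution $s(a,b)=(b,a)$ on $Y$. To check that $f$ is a morphism of solutions, I need $f(\sigma_x(y))=f(y)$ and $f(\gamma_y(x))=f(x)$ for all $x,y\in X$. The first holds because $\sigma_x\in\mathcal{G}(X,r)$, so $\sigma_x$ preserves every orbit. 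For the second, use the standard identity $\gamma_y(x)=\sigma_{\sigma_x(y)}^{-1}(x)$ recalled in the preliminaries; this exhibits $\gamma_y(x)$ as the image of $x$ under an element of $\mathcal{G}(X,r)$, so it lies in the orbit of $x$. Hence $s(f(x),f(y))=(f(y),f(x))$ defines a bona fide (flip) solution on $Y$, and $f$ is an epimorphism of solutions.

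The first case I would handle is when at least one orbit has size $\geq 2$. Then $1<|Y|<|X|$, so $f$ is neither an isomorphism nor a map to a one-point set, directly contradicting simplicity. The remaining case is when every orbit is a singleton, i.e.\ every $\sigma_x=\id$, which forces $r$ to be the flip on $X$. Here the orbit projection is just the identity, so it does not yield a contradiction by itself, and this is the one place where the hypothesis $|X|>2$ is genuinely used. Since any map between two flip solutions is automatically a morphism of solutions, I can just choose any surjection $f:X\to Y$ with $|Y|=2$ onto the flip solution on a two-element set; because $|X|>2$ this $f$ is not injective, and $|Y|=2\neq 1$, again contradicting simplicity.

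The only real subtlety is remembering that the orbit-quotient construction collapses to a trivial quotient precisely when $\mathcal{G}(X,r)$ acts as the identity, and treating that flip-solution case separately (which is also where the hypothesis $|X|>2$ is unavoidable, cf.\ the simple two-element flip solution). Everything else is a direct verification using only the facts that $\sigma_x\in\mathcal{G}(X,r)$ and that $\gamma_y(x)$ is obtained from $x$ by an element of $\mathcal{G}(X,r)$.
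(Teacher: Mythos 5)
Your proof is correct, and it rests on the same basic mechanism as the paper's: a partition of $X$ into $\mathcal{G}(X,r)$-invariant blocks induces an epimorphism onto a trivial (flip) solution on the set of blocks. The difference is in the choice of partition. The paper simply splits $X$ into \emph{two} non-empty invariant subsets $X_1\cup X_2$ (possible whenever the action is intransitive) and maps onto the two-element flip solution; this makes the argument uniform, since the target always has cardinality exactly $2$ and non-injectivity is immediate from $|X|>2$. You instead pass to the full orbit quotient $X/\mathcal{G}(X,r)$, which is conceptually the finest choice but degenerates precisely when every orbit is a singleton, forcing your separate case for the trivial action --- and in that case you fall back on exactly the paper's device, a surjection onto a two-element flip solution. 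So your route costs one extra case split but buys nothing the coarser two-block partition does not already give; both correctly locate the role of the hypothesis $|X|>2$ in ruling out the two-element flip solution, which is genuinely simple.
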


\begin{proof}
    Suppose that $(X,r)$ is decomposable.
Then $X$ is a disjoint union $X=X_1 \cup X_2$ of two non-empty
subsets $X_1$ and $X_2$  of $X$ such that $r(X_i \times X_i) =X_i
\times X_i$, for $i=1,2$. Note that then
    $r(X_1 \times X_2) = X_2 \times X_1$ and
$r(X_2 \times X_1) =X_1 \times X_2$. Let $Y=\{ 1,2\}$ and $s:Y\times
Y \rightarrow Y\times Y$ be defined by $s(i,j)=(j,i)$, for all
$i,j\in Y$. Thus $(Y,s)$ is the trivial solution. Let
$f:X\rightarrow Y$ be the map defined by $f(x_i)=i$ if $x_i\in X_i$.
Clearly $f$ is an epimorphism of solutions and it is not an
isomorphism because $|X|>2$. But, by assumption, $(X,r)$ is simple,
a contradiction. Therefore $(X,r)$ is indecomposable.
\end{proof}

As mentioned earlier (see \cite{ESS}),
 if $(X,r)$ is an indecomposable solution  of the YBE and  $|X|$  is a prime, then
it is a multipermutation solution of level 1 (in particular it is retractable).

\begin{proposition}\label{irretract}
    Let $(X,r)$ be a finite simple solution of the YBE. If $|X|$ is not prime, then $(X,r)$ is irretractable.
\end{proposition}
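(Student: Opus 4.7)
The plan is to argue by contradiction: assume $(X,r)$ is retractable and derive a contradiction with simplicity, using the composite-ness of $|X|$ to produce a proper epimorphism.

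First, I would apply simplicity directly to the canonical retraction. The map $f\colon (X,r)\to \Ret(X,r)$, $x\mapsto \bar x$, is an epimorphism of solutions. Since $(X,r)$ is simple, $f$ is either an isomorphism or satisfies $|\Ret(X,r)|=1$. The first alternative would mean $(X,r)=\Ret(X,r)$, contradicting the retractability assumption; hence $|\Ret(X,r)|=1$ and all the maps $\sigma_x$ agree with a common permutation $\sigma\in\Sym_X$.

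Second, I would reduce to a purely permutation-theoretic picture. Since $|X|>1$ is not prime, we have $n:=|X|\geq 4>2$, so Lemma~\ref{1} gives that $(X,r)$ is indecomposable, i.e., $\mathcal{G}(X,r)=\langle\sigma\rangle$ acts transitively on $X$. Thus $\sigma$ is a single $n$-cycle on $X$. Moreover, $\gamma_y(x)=\sigma^{-1}_{\sigma_x(y)}(x)=\sigma^{-1}(x)$, so the solution is the permutation solution $r(x,y)=(\sigma(y),\sigma^{-1}(x))$.

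Third, I would exploit the composite structure of $n$ to construct the offending quotient. Pick a proper divisor $d$ of $n$ with $1<d<n$. The subgroup $\langle\sigma^d\rangle$ partitions $X$ into exactly $d$ orbits of size $n/d$; let $\bar X$ be the set of these orbits, let $\bar\sigma\in \Sym_{\bar X}$ be the permutation induced by $\sigma$, and set $\bar r(\bar x,\bar y):=(\bar\sigma(\bar y),\bar\sigma^{-1}(\bar x))$. Then $(\bar X,\bar r)$ is again a (permutation) solution of the YBE and the quotient map $X\to\bar X$ is an epimorphism of solutions, with $1<|\bar X|=d<n=|X|$, contradicting simplicity.

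The main obstacle is really the very first step of pinning down that the only way simplicity is compatible with retractability is the collapse to a single $\sigma$; once the solution is in the explicit permutation form $r(x,y)=(\sigma(y),\sigma^{-1}(x))$ with $\sigma$ an $n$-cycle, the orbit decomposition of $\langle\sigma^d\rangle$ immediately supplies a nontrivial block system, and the verification that the quotient inherits a YBE solution structure and that the projection is a morphism of solutions is routine because every datum on both sides is governed solely by the single permutation $\sigma$ and its quotient $\bar\sigma$.
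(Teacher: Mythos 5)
Your proposal is correct and follows essentially the same route as the paper: collapse to a permutation solution via simplicity applied to the retraction, use Lemma~\ref{1} to get that $\sigma$ is an $n$-cycle, and then quotient by a proper divisor $d$ of $n$ to produce a permutation solution on $d$ elements (your orbit description of $\langle\sigma^d\rangle$ is just the paper's map $x_i\mapsto i\in\Z/(d)$ in different clothing). No gaps.
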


\begin{proof}
    Suppose that $(X,r)$ is retractable. Consider the natural projection $\pi : (X,r) \rightarrow \Ret (X,r)$.
    Since, $\pi$ is an epimorphism of solutions and because it is not an isomorphism and $(X,r)$ is simple,
    we have that the cardinality of $\Ret (X,r)$ is 1.  Again write $r(x,y)=(\sigma_x (y), \sigma^{-1}_{\sigma_x (y)} (x))$ for $x,y\in X$.
    Then we have that
    $$\sigma_x =\sigma_y$$
    for all $x,y\in X$. Thus $r(x,y)=(\sigma(y),\sigma^{-1}(x))$, for some permutation $\sigma\in \Sym_{X}$,
    that is $(X,r)$ is a permutation solution, introduced by Lyubashenko; see \cite{drinfeld}. By Lemma~\ref{1}, since $|X|>2$, $(X,r)$ is indecomposable.
    By \cite[p. 184]{ESS}, $\sigma$ is a cycle of length $|X|$, that is $\sigma=( x_1, \ldots , x_n)$ with $n=|X|$.
    Since $n$ is not prime,
    there exist integers $1<d,m<n$ such that $n=dm$. Let $Y=\Z/(d)$  and let
    $s:Y\times Y \rightarrow Y\times Y$ be the map defined by $s(i,j)=(j+1,i-1)$ for all $i,j\in Y$.
    Then $(Y,s)$ is a solution of the YBE (a permutation solution). Let $f:X\rightarrow Y$ be the map defined by $f(x_i) =i \; (\in \Z/(d))$, for all $i=1, \ldots , n$.
    Note that
    $$f(\sigma_{x_{i}}(x_j)) =\left\{ \begin{array}{ll}
    f(x_{j+1}) & \; \mbox{if } j<n\\
    1 & \mbox{ if } j=n
    \end{array} \right. .
    $$
Hence $f$ is an epimorphism of solutions and $1<|Y|<|X|$, a
contradiction. Therefore $(X,r)$ is irretractable.
\end{proof}

Let $(X,r)$ be a finite simple solution such that $|X|$ is
not prime. By Lemma
\ref{1} and Proposition \ref{irretract}, we know  that $(X,r)$ is
indecomposable and irretractable. Consider the permutation group
$\mathcal{G}=\mathcal{G} (X,r)$. Then the map $x\mapsto \sigma_x$
is an injective morphism of solutions from $(X,r)$ to
$(\mathcal{G},r_{\mathcal{G}})$. Furthermore, by Lemma
\ref{lambda}, $\sigma(X)=\{\sigma_x\mid x\in X\}$ is an orbit by
the action of the lambda map in the left brace $\mathcal{G}$ that
generates the  multiplicative (and the additive) group of the left
brace $\mathcal{G}$.

\begin{proposition}   \label{idealI}
    With the above conditions, if $I$ is a minimal nonzero ideal of $\mathcal{G}$, then $\mathcal{G}/I$ is a trivial cyclic brace,
   and $I=\mathcal{G}^2=\mathcal{G}*\mathcal{G}=\langle \sigma_x-\sigma_y\mid x,y\in X\rangle_+$. Furthermore $\soc(\mathcal{G})=\{ 0\}$.
\end{proposition}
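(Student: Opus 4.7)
The plan is to exploit simplicity through a well-chosen epimorphism of solutions built from $I$, after first collecting two preliminary observations. Since $|X|$ is not prime, Lemma~\ref{1} and Proposition~\ref{irretract} give that $(X,r)$ is indecomposable and irretractable, and the vanishing $\soc(\mathcal G) = \{0\}$ is then immediate from Lemma~\ref{lambda}: if $a \in \soc(\mathcal G)$ then $\lambda_a = \id$ forces $\sigma_{a(x)} = \lambda_a(\sigma_x) = \sigma_x$ for every $x \in X$, so $a(x) = x$ by irretractability, whence $a$ is the identity, i.e.\ $a = 0$ in $\mathcal G$.

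Next I would identify $J := \langle \sigma_x - \sigma_y \mid x,y \in X\rangle_+$ with $\mathcal G^2$. The inclusion $J \subseteq \mathcal G^2$ uses transitivity: picking $g \in \mathcal G$ with $g(x) = y$, one has $\sigma_y - \sigma_x = \lambda_g(\sigma_x) - \sigma_x = g * \sigma_x \in \mathcal G^2$. For the reverse inclusion, any $a, b \in \mathcal G$ admits an expansion $b = \sum_j n_j \sigma_{y_j}$ by additive generation of $\mathcal G$ by $\sigma(X)$, and additivity of $\lambda_a$ together with Lemma~\ref{lambda} gives $a * b = \sum_j n_j (\sigma_{a(y_j)} - \sigma_{y_j}) \in J$. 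Hence $J = \mathcal G^2$ is an ideal, and it is nonzero since irretractability makes the $\sigma_x$ pairwise distinct.

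The crux is the map $f\colon X \to \mathcal G/I$, $x \mapsto \bar{\sigma_x}$, which is a morphism of solutions as the composite of the embedding $X \hookrightarrow \mathcal G$ with the quotient brace homomorphism. By Lemma~\ref{lambda} combined with transitivity, its image $Y = \{\bar{\sigma_x}\}$ is a $\lambda$-orbit in $\mathcal G/I$, hence a subsolution of $(\mathcal G/I, r_{\mathcal G/I})$, and $f\colon (X,r) \to (Y, r_{\mathcal G/I}|_Y)$ is an epimorphism. Simplicity leaves only two possibilities: $|Y| = 1$, or $f$ is an isomorphism. The hard part is excluding the second alternative. If $f$ were an isomorphism, then $\mathcal G(X,r) \cong \mathcal G(Y, r_{\mathcal G/I}|_Y)$ as groups; since $\sigma(X)$ generates $\mathcal G$ multiplicatively, $Y$ generates $\mathcal G/I$ multiplicatively, so the $\circ$-group homomorphism $\mathcal G/I \to \Sym(Y)$, $\bar g \mapsto \lambda_{\bar g}|_Y$, has image equal to $\mathcal G(Y, r_{\mathcal G/I}|_Y) = \langle \lambda_{\bar y}|_Y \mid \bar y \in Y\rangle$; its kernel is exactly $\soc(\mathcal G/I)$ because $Y$ also additively generates $\mathcal G/I$ and each $\lambda_{\bar g}$ is additive. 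This yields $|\mathcal G| \leq |\mathcal G/I| < |\mathcal G|$, a contradiction since $I \neq 0$.

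Once that case is excluded, $|Y| = 1$ gives $J \subseteq I$, and the minimality of $I$ among nonzero ideals forces $I = J = \mathcal G^2$. The quotient $\mathcal G/I = \mathcal G/\mathcal G^2$ is a trivial brace by the definition of $\mathcal G^2$, and is additively cyclic because the additive generating set $\sigma(X)$ of $\mathcal G$ collapses to a single element modulo $\mathcal G^2$.
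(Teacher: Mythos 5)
Your proof is correct, and its overall architecture coincides with the paper's: pass to the epimorphism $x\mapsto \overline{\sigma_x}$ onto the $\lambda$-orbit $Y=\{\overline{\sigma_x}\}\subseteq \mathcal{G}/I$, use simplicity to force $|Y|=1$, deduce $\langle \sigma_x-\sigma_y\rangle_+\subseteq I$, identify this subgroup with $\mathcal{G}^2$ exactly as you do (via $\sigma_{g(x)}-\sigma_x=g*\sigma_x$ and additive generation by $\sigma(X)$), and finish by minimality; the socle computation is also identical. The one place you genuinely diverge is the step you call the hard part, excluding the alternative that $f$ is an isomorphism. The paper does this in one line: take a nonzero $g\in I$; it moves some $x\in X$, and $\overline{\sigma_x}=\lambda_{\overline{g}}(\overline{\sigma_x})=\overline{\sigma_{g(x)}}$ shows $x\mapsto\overline{\sigma_x}$ identifies the distinct points $x$ and $g(x)$, so it is not injective and simplicity immediately gives $|Y|=1$. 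Your counting argument --- $\mathcal{G}(Y,s)\cong(\mathcal{G}/I)/\soc(\mathcal{G}/I)$ has order at most $|\mathcal{G}/I|<|\mathcal{G}|$, contradicting $\mathcal{G}(X,r)\cong\mathcal{G}(Y,s)$ --- is valid (the kernel identification uses that $Y$ generates $(\mathcal{G}/I,+)$ and that $\lambda$-maps are additive, both of which you correctly invoke), but it is a detour: the non-injectivity of $f$ is already visible pointwise from any nonzero element of $I$, without comparing permutation groups.
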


\begin{proof}
       Let $I$ be a minimal nonzero ideal of $\mathcal{G}$. Let $\pi\colon \mathcal{G}\longrightarrow \mathcal{G}/I$ be the natural map.
       Let $g\in I$ be a nonzero element.  By Lemma \ref{lambda}, $\lambda_g(\sigma_x)=\sigma_{g(x)}$, for all $x\in X$. Hence
    $$\pi(\sigma_{g(x)})=\pi(\lambda_g(\sigma_x))=\lambda_{\pi(g)}(\pi(\sigma_x))=\lambda_0(\pi(\sigma_x))=\pi(\sigma_x).$$
    Thus the restriction $\pi|_{\sigma(X)}$ of $\pi$ induces an epimorphism $x\mapsto\pi(\sigma_x)$ of solutions from $(X,r)$ to $(\pi(\sigma(X)),s)$, where
    $$s(\pi(\sigma_x),\pi(\sigma_y))=(\pi(\sigma_{\sigma_x(y)}),\pi(\sigma_{\sigma^{-1}_{\sigma_x(y)}(x)})).$$
    Since $g\neq 0$, there exists $x\in X$ such that $x\neq g(x)$. Furthermore,
    since $\pi(\sigma_x)=\pi(\sigma_{g(x)})$ and $(X,r)$ is simple, we have that $\pi(\sigma_x)=\pi(\sigma_y)$, for all $x,y\in X$. Hence
    $$\langle \sigma_x-\sigma_y\mid x,y\in X\rangle_+\subseteq I.$$
    Note that
    \begin{eqnarray*}
        \mathcal{G}^2&=&\langle g*h \mid g,h\in \mathcal{G}\rangle_+\\
        &=&\langle \lambda_g(h)-h \mid g,h\in \mathcal{G}\rangle_+\\
        &=&\langle \lambda_g(\sigma_x)-\sigma_x \mid g\in \mathcal{G},\, x\in X\rangle_+\quad\mbox{ (since $\mathcal{G}=\langle\sigma_x\mid x\in X\rangle_+$)}\\
        &=&\langle \sigma_{g(x)}-\sigma_x \mid g\in \mathcal{G},\, x\in X\rangle_+ \quad\mbox{ (by the above)}\\
        &=&\langle \sigma_{y}-\sigma_x \mid  x,y\in X\rangle_+, \quad\mbox{ (since $(X,r)$ is indecomposable)}
    \end{eqnarray*}
    By the minimality of $I$ we get $I=\mathcal{G}^2=\langle \sigma_x-\sigma_y\mid x,y\in X\rangle_+$ .
    Now it is easy to see that if $x\in X$,  $\mathcal{G}/I=\langle \pi(\sigma_x)\rangle_+=\langle \pi(\sigma_x)\rangle$ is the trivial brace of the cyclic group $\langle \pi(\sigma_x)\rangle$.
    Finally let $g\in \soc(\mathcal{G})$. By Lemma \ref{lambda}, we have that
    $$\sigma_{g(x)}=\lambda_g(\sigma_x)=\sigma_x,$$
    for all $x\in X$. Since $(X,r)$ is irretractable, we get that $g(x)=x$ for all $x\in X$. This proves that $\soc(\mathcal{G})=\{ 0\}$, and the result follows.
\end{proof}

Of course, nontrivial finite simple left braces $B$ satisfy $B=B^2$
and $\soc(B)=\{ 0\}$.  There are several constructions of finite simple left braces,
see for example \cite{CJOabund} and the references therein, but we do not know the answer to the following questions.

\begin{question}\label{questionorbit}
Is there a nontrivial finite simple left brace $B$ with an orbit $X$ by the action of the lambda map such that $B=\langle X\rangle_+$?
\end{question}

\begin{question}  \label{question_simple}
Is there a finite simple solution $(X,r)$ of the YBE such that $\mathcal{G}(X,r)$ is a nontrivial simple left brace?
\end{question}

Note that an affirmative answer to Question \ref{question_simple}
implies an affirmative answer to Question \ref{questionorbit}.

We will study the simple solutions $(X,r)$ of the YBE, where
$X=Y\times Z$ and
$r(x,y)=(\sigma_x(y),\sigma^{-1}_{\sigma_x(y)}(x))$, for all
$x,y\in X$ and
$$\sigma_{(i,j)}(k,l) =(\sigma_{j}(k),d_{i,\sigma_j(k)}(l)),$$
for all $i,k\in Y$ and $j,l\in Z$. Note that in this case the sets
$X_i=\{ (i,j)\mid j\in Z\}$, for $i\in Y$, are imprimitivity
blocks for the action of $\mathcal{G}(X,r)$ on $X$, and we are
assuming that $\sigma_{i,j}X_k=X_{\sigma_j(k)}$ is independent of
$i$. We also simplify the second component of
$\sigma_{(i,j)}(k,l)$, imposing a link between $j$ and $k$, which
is $\sigma_j(k)$.

\begin{proposition}\label{sec6newind}
    Let $Y,Z$ be finite non-empty sets such that $|Y|,|Z|>1$. Let $X=Y\times Z$. Let $r\colon X\times X\longrightarrow X\times X$ be a map and write $r(x,y)=(\sigma_x(y),\gamma_y(x))$.
    Assume that $\sigma_{(i,j)} (k,l)=(\sigma_j(k),d_{i,\sigma_j(k)}(l))$,  for all $i,k\in Y$ and all $j,l\in Z$.
    Then $(X,r)$ is an indecomposable and irretractable solution of the YBE  if and only if $\sigma_j\in \Sym_Y$, $d_{i,k}\in \Sym_Z$,
    the permutation  subgroups
    $F=\langle\sigma_j\mid j\in Z\rangle\subseteq\Sym_Y$ and $W=\langle d_{i,k}\mid i,k\in Y\rangle\subseteq\Sym_Z$ are   transitive,
    \begin{enumerate}
        \item   $ r((i,j),(k,l)) =
        ((\sigma_j (k),d_{i,\sigma_j(k)}(l)) , (\sigma^{-1}_{d_{i,\sigma_j(k)}(l)}(i),d^{-1}_{\sigma_j(k),i}(j)) )$,
        \item $\sigma_j\circ\sigma_{d^{-1}_{i,k}(l)}=\sigma_l\circ\sigma_{d^{-1}_{k,i}(j)}$,
        \item $d_{i,k}=d_{k,i}$,
        \item $d_{i,w} \circ  d_{ \sigma^{-1}_{j}(k),\sigma^{-1}_j(w)}=d_{k,w } \circ  d_{\sigma^{-1}_{l}(i), \sigma^{-1}_l(w)},$
    \end{enumerate}
    for all $i,k,w\in Y$ and $j,l\in Z$, and
    \begin{itemize}
        \item[(i)] $\sigma_{j}\neq \sigma_{l}$ for $j\neq l$,
        \item[(ii)] if $d_{i,k}=    d_{i',k}$ for every $k$, then $i=i'$.
    \end{itemize}
\end{proposition}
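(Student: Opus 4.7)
The plan is to unpack each property of $(X,r)$---non-degeneracy, involutivity, the YBE, irretractability, and indecomposability---as an explicit condition on the data $(\sigma_j)_{j\in Z}$ and $(d_{i,k})_{i,k\in Y}$, working componentwise on the product $X=Y\times Z$ and using the given form of $\sigma_{(i,j)}$. A direct inversion gives $\sigma_{(i,j)}^{-1}(k,l)=(\sigma_j^{-1}(k),d_{i,k}^{-1}(l))$. From this one sees at once that $\sigma_{(i,j)}$ is a bijection of $X$ iff $\sigma_j\in\Sym_Y$ and $d_{i,m}\in\Sym_Z$ for all $i,m\in Y$; moreover substituting into $\gamma_{(k,l)}((i,j))=\sigma^{-1}_{\sigma_{(i,j)}(k,l)}((i,j))$ produces precisely the second component of~(1), so that~(1) records involutivity of $r$.

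For the YBE I would use the cycle set reformulation: for an involutive non-degenerate $r$, the YBE is equivalent to $\sigma_x\sigma_{\sigma_x^{-1}(y)}=\sigma_y\sigma_{\sigma_y^{-1}(x)}$ for all $x,y\in X$. Substituting $x=(i,j)$, $y=(k,l)$ and evaluating both sides at an auxiliary point $(a,b)\in Y\times Z$, the equality of first coordinates is exactly
\[
\sigma_j\sigma_{d_{i,k}^{-1}(l)}(a)=\sigma_l\sigma_{d_{k,i}^{-1}(j)}(a),
\]
i.e.\ condition~(2). Setting $w=\sigma_j\sigma_{d_{i,k}^{-1}(l)}(a)$, so that $\sigma_{d_{i,k}^{-1}(l)}(a)=\sigma_j^{-1}(w)$ and, by~(2), $\sigma_{d_{k,i}^{-1}(j)}(a)=\sigma_l^{-1}(w)$, the equality of second coordinates collapses exactly to condition~(4) (as $a$, hence $w$, and $b$ range freely). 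Conversely~(2) and~(4) together reassemble the cycle set axiom. Condition~(3) is not an independent requirement: specialising $l=j$ in~(2) gives $\sigma_{d_{i,k}^{-1}(j)}=\sigma_{d_{k,i}^{-1}(j)}$, and~(i) forces $d_{i,k}^{-1}(j)=d_{k,i}^{-1}(j)$ for every $j$, whence $d_{i,k}=d_{k,i}$.

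The remaining two properties are direct. Irretractability is injectivity of $(i,j)\mapsto\sigma_{(i,j)}$; comparing the two coordinates of $\sigma_{(i,j)}=\sigma_{(i',j')}$ gives $\sigma_j=\sigma_{j'}$ from the first coordinate and $d_{i,m}=d_{i',m}$ for all $m\in Y$ from the second, so irretractability is equivalent to~(i) combined with~(ii). For indecomposability, the partition $\{\{i\}\times Z:i\in Y\}$ is preserved by $\mathcal{G}(X,r)$ and the induced action on $Y$ is generated by the $\sigma_j$'s, while each $\sigma_{(i,j)}$ acts on the target block by some $d_{i,m}\in W$. A short block-action argument then shows that $\mathcal{G}$ is transitive on $X$ iff $F$ is transitive on $Y$ and $W$ is transitive on $Z$: the forward direction uses that the fiber stabiliser acts through a subgroup of $W$, and the backward direction uses $F$-transitivity to freely prescribe the first coordinate at each step so that any product of generators $d_{a,b}$ is realised as the vertical effect of a suitable word in the $\sigma_{(i,j)}$'s and their inverses.

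The main difficulty, as I see it, is purely the bookkeeping: organising the cycle set computation so that the two coordinate equations produce~(2) and~(4) in the stated symmetric form, and in the indecomposability equivalence choosing the first-coordinate shifts so that an arbitrary product of $d_{a,b}$'s appears exactly as a fiber action. Once these are done, the remaining verifications are immediate from the definitions.
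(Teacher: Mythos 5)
Your reduction of the statement to componentwise conditions follows the paper's proof almost step for step: bijectivity of $\sigma_{(i,j)}$ amounts to $\sigma_j\in\Sym_Y$ and $d_{i,m}\in\Sym_Z$; condition (1) is exactly the involutivity identity $\gamma_y(x)=\sigma^{-1}_{\sigma_x(y)}(x)$ written out; the YBE is handled through $\sigma_x\sigma_{\sigma_x^{-1}(y)}=\sigma_y\sigma_{\sigma_y^{-1}(x)}$, whose first coordinate gives (2) and whose second coordinate, after the substitution $w=\sigma_j\sigma_{d_{i,k}^{-1}(l)}(a)$ (legitimate once (2) is in force), gives (4); condition (3) comes from (2) with $j=l$ together with (i); and irretractability is (i) plus (ii). The paper does the same, merely routing the second-coordinate computation through an intermediate identity before performing your substitution. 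All of this part of your proposal is correct.

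The gap is in the backward implication of your indecomposability equivalence, namely the claim that ``any product of generators $d_{a,b}$ is realised as the vertical effect of a suitable word in the $\sigma_{(i,j)}$'s and their inverses.'' The vertical effect of $\sigma_{(i,j)}$ applied on the block $\{k\}\times Z$ is $d_{i,\sigma_j(k)}$: the first index is free, but the second index of every factor is forced to be the block reached at that step, so for a word stabilising a block the second indices must trace a closed $F$-path. Hence the group of vertical effects of block-stabilising words can be a proper, even intransitive, subgroup of $W$. Concretely, take $Y=Z=\Z/(2)$ with every $\sigma_j$ the transposition of $Y$ and every $d_{i,k}$ the transposition of $Z$: this has the prescribed form, $(X,r)$ is a (permutation) solution, $F$ and $W$ are transitive, yet every $\sigma_{(i,j)}$ is the single permutation $(k,l)\mapsto(k+1,l+1)$ and $\mathcal{G}(X,r)$ has two orbits on $X$. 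So ``$F$ and $W$ transitive implies $\mathcal{G}(X,r)$ transitive'' is false at the level of generality at which you argue it; your argument nowhere uses (i), (ii) or conditions (2)--(4) for this step, and some such input is genuinely needed (for instance, length-two loops show that the block stabiliser induces on $Z$ a subgroup containing all $d_{i',m}^{-1}d_{i,m}$ with $m\in\{\sigma_j(k)\mid j\in Z\}$, and one must then use the remaining hypotheses to get transitivity of that subgroup, as the paper effectively does in its later applications and in the proof of Theorem \ref{thmmain}). To be fair, the paper itself dismisses this step as ``clear'', so you are no less detailed than the source --- but the justification you supply is the one step of your proposal that, as written, would fail.
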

\begin{proof}  First, we shall see that $(X,r)$ is a solution of the YBE if and only if
    $\sigma_j\in \Sym_Y$, $d_{i,k}\in \Sym_Z$, conditions 1. and 2. are satisfied
    and
    \begin{itemize}
        \item[(4')] $d_{i,\sigma_j\sigma_{d^{-1}_{i,k}(l)}(u)} \circ  d_{ \sigma^{-1}_{j}(k),\sigma_{d^{-1}_{i,k}(l)}(u)}
            =d_{k,\sigma_l\sigma_{d^{-1}_{k,i}(j)}(u) } \circ  d_{\sigma^{-1}_{l}(i), \sigma_{d^{-1}_{k,i}(l)}(u)},$
    \end{itemize}
    for all $i,k,u\in Y$ and $j,l\in Z$.

 We know from \cite{CJOComm} that $(X,r)$ is a solution of the YBE if and only if $\sigma_{(i,j)}$ is bijective for all $(i,j)\in X$ and the following conditions hold:
\begin{itemize}
    \item[(a)] $\gamma_{y}(x)=\sigma^{-1}_{\sigma_x(y)}(x)$ for all $x,y\in X$.
    \item[(b)] $\sigma_{x}\sigma_{\sigma^{-1}_{x}(y)}=\sigma_{y}\sigma_{\sigma^{-1}_{y}(x)}$, for all $x,y\in X$.
\end{itemize}
Clearly $\sigma_{(i,j)}$ is bijective if and only if $\sigma_j$ and $d_{i,\sigma_j(k)}$ are bijective for all $k\in Y$.
Condition (a) is equivalent to  $r((i,j),(k,l))=(\sigma_{(i,j)}(k,l), \sigma^{-1}_{\sigma_{(i,j)}(k,l)}(i,j))$, and by the definition of the maps $\sigma_j$ and $d_{i,k}$, we have
\begin{eqnarray*}r((i,j), (k,l))&=&(\sigma_{(i,j)}(k,l), \sigma^{-1}_{\sigma_{(i,j)}(k,l)}(i,j))\\
    &=& ((\sigma_j (k), d_{i,\sigma_j(k)}(l)) , (\sigma^{-1}_{(\sigma_j (k), d_{i,\sigma_j(k)}(l))}(i,j))\\
    &=&((\sigma_j (k),d_{i,\sigma_j(k)}(l)) , (\sigma^{-1}_{d_{i,\sigma_j(k)}(l)}(i),d^{-1}_{\sigma_j(k),i}(j)) ).
\end{eqnarray*}
Hence, condition (a) is equivalent to condition 1.

Note that
\begin{eqnarray*}
    \sigma_{(i,j)} \sigma_{\sigma^{-1}_{(i,j)}(k,l)} (u,v) &=&
    \sigma_{(i,j)} \sigma_{(\sigma^{-1}_{j}(k), d^{-1}_{i,k}(l))} (u,v)\\
    &=& \sigma_{(i,j)} \left(
    \sigma_{d^{-1}_{i,k}(l)}(u),
    d_{\sigma^{-1}_{j}(k), \; \sigma_{d^{-1}_{i,k }(l)}(u)}  (v)    \right)\\
    &=& (\sigma_{j} (\sigma_{d^{-1}_{i,k}(l)}(u)),
    d_{i,\sigma_j \sigma_{d^{-1}_{i,k}(l)}(u) }
    (d_{\sigma^{-1}_{j}(k) \; \sigma_{d^{-1}_{i,k }(l)}(u)}  (v))).
\end{eqnarray*}
Now condition (b) means that
\begin{eqnarray*}
    &&\sigma_j\circ\sigma_{d^{-1}_{i,k}(l)}=\sigma_l\circ\sigma_{d^{-1}_{k,i}(j)}
\end{eqnarray*}
and
\begin{eqnarray*}\lefteqn{d_{i\; \sigma_j \sigma_{d^{-1}_{i,k}(l)}(u) } \circ  d_{ \sigma^{-1}_{j}(k), \; \sigma_{d^{-1}_{i,k}(l)}(u)}}\\
    && =d_{k\; \sigma_l\sigma_{d^{-1}_{k,i}(j)}(u) } \circ  d_{\sigma^{-1}_{l}(i), \; \sigma_{d^{-1}_{k,i}(j)}(u)},
\end{eqnarray*}
for all $i,k,u\in Y$ and $j,l\in Z$. Hence condition (b) holds if and only if conditions 2. and (4') hold. Therefore the claim follows.

    Suppose that $(X,r)$ is a solution of the YBE. It is clear that $(X,r)$ is indecomposable if and only if the permutation groups $F$ and $W$ are transitive.

    Let $i,i'\in Y$ and $j,j'\in Z$ be such that
    $\sigma_{(i,j)}=\sigma_{(i',j')}$. This is equivalent to
    $$(\sigma_j(k),d_{i,\sigma_j(k)}(l))=(\sigma_{j'}(k),d_{i',\sigma_{j'}(k)}(l)),$$
    for all $k\in Y$ and $l\in Z$. Hence $(X,r)$ is irretractable if and only if
    conditions (i) and (ii) are satisfied.

    Suppose that $(X,r)$ is an irretractable solution of the YBE. By condition 2. with $j=l$, we have that $\sigma_{d^{-1}_{i,k}(j)}=\sigma_{d^{-1}_{k,i}(j)}$,
    for all $j\in Z$. Therefore $\sigma_{(u,d^{-1}_{i,k}(j))}=\sigma_{(u,d^{-1}_{k,i}(j))}$, for all $i,k,u\in Y$ and all $j\in Z$. Since $(X,r)$ is irretractable,
    we have that $d_{i,k}=d_{k,i}$ for all $i,k\in Y$, which is condition
    3. Then, by condition (4'), for
    $w=\sigma_j\sigma_{d^{-1}_{i,k}(l)}(u)$, we get
    $$d_{i,w} \circ  d_{ \sigma^{-1}_{j}(k),\sigma^{-1}_{j}(w)}
        =d_{k,w} \circ  d_{\sigma^{-1}_{l}(i), \sigma^{-1}_{l}(w)}.$$
    Thus condition 4. is satisfied.

    Hence, if $(X,r)$ is an irretractable solution of the YBE,
    conditions 2. and (4') are equivalent to conditions 2. and
    4.

    Therefore the result follows.
\end{proof}

Now, we shall construct concrete examples of indecomposable and
irretractable solutions $(X,r)$, of the form described in
Proposition~\ref{sec6newind}, with $|X|=n^2$ for every integer
$n>1$.

\begin{theorem}\label{newexample2}
    Let $n>1$ be an integer. Let $t\in\Z/(n)$ be an invertible element. Let $j_0,\dots, j_{n-1}\in
    \Z/(n)$ be elements such that $j_i=j_{-i}$ and
    \begin{equation}\label{condition} j_{t^si}=t^sj_i-(t^s-1)j_0, \end{equation}
    for all $i\in\Z/(n)$ and all $s\in\Z$. Suppose that for every
    nonzero $i\in \Z/(n)$ there exists $k\in \Z/(n)$ such that
    $j_{i+k}- j_k$ is invertible.
    Let $r\colon (\Z/(n))^2\times (\Z/(n))^2\longrightarrow
    (\Z/(n))^2\times (\Z/(n))^2$ be the map defined by
    $$r((i,j),(k,l))=(\sigma_{(i,j)}(k,l),\sigma^{-1}_{\sigma_{(i,j)}(k,l)}(i,j)),$$
    where $\sigma_{(i,j)}(k,l)=(tk+j,t(l-j_{tk+j-i}))$, for all
    $i,j,k,l\in\Z/(n)$. Then $((\Z/(n))^2,r)$ is an indecomposable and irretractable solution of
    the YBE.

    Assume, moreover, that
    \begin{itemize}
        \item[(i)] $j_0-j_i$ is invertible for every nonzero $i\in\Z/(n)$, and
        \item[(ii)] $j_i-j_k$ is invertible for all $j_i\neq j_k$.
    \end{itemize} Then  $((\Z/(n))^2,r)$ is a simple solution of
    the YBE.
\end{theorem}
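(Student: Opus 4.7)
The first part of the theorem follows by applying Proposition~\ref{sec6newind}. Comparing the given formula for $\sigma_{(i,j)}$ with the general shape $(\sigma_j(k),d_{i,\sigma_j(k)}(l))$ forces $\sigma_j(k)=tk+j$ and $d_{i,k}(l)=t(l-j_{k-i})$; both are bijections because $t$ is invertible, and transitivity of $F=\langle\sigma_j\rangle$ is immediate. For $W=\langle d_{i,k}\rangle$, one computes that $d_{0,k}^{-1}d_{0,k'}$ is translation by $j_k-j_{k'}$, so the hypothesis that for every nonzero $i$ some $j_{i+k}-j_k$ is invertible produces a generating translation of $\Z/(n)$.

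Next I would verify the four identities and conditions (i)--(ii) of Proposition~\ref{sec6newind}. Condition~1 is the definition of $r$, and condition~3 reduces to $j_{k-i}=j_{i-k}$, i.e.\ the symmetry $j_u=j_{-u}$. Condition~2 unfolds directly to an identity that again uses only this symmetry. Condition~4 is where the scaling relation~\eqref{condition} genuinely enters: since $\sigma_j^{-1}(w)-\sigma_j^{-1}(k)=(w-k)/t$, the quantity $j_{t^{-1}(w-k)}$ appears, and \eqref{condition} with $s=-1$ rewrites it as $t^{-1}(j_{w-k}+(t-1)j_0)$; after substitution both sides of condition~4 collapse to $t^2 m - t(j_{w-i}+j_{w-k}+(t-1)j_0)$, symmetric in $i$ and $k$. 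Condition~(i) is clear, and (ii) says $j_c=j_{c+\delta}$ for all $c$, where $\delta=i'-i$; the invertibility hypothesis applied to $\delta$ rules this out unless $\delta=0$. This establishes indecomposability and irretractability.

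For the simplicity statement, assume (i), (ii), and suppose for contradiction that $f\colon(X,r)\to(Y,s)$ is an epimorphism with $1<|Y|<n^2$. Let $\sim$ be the induced $\mathcal{G}$-invariant and $\sigma$-compatible equivalence on $X$. Fix distinct $\sim$-equivalent points $(i,j)$ and $(i',j')$ and set $\alpha=i'-i$, $\beta=j'-j$. Propagation via $\sigma_{(k,l)}$ for arbitrary $(k,l)$ yields, after the substitution $a=ti+l$, the family of equivalences
\[(a,\,t(j-j_{a-k}))\ \sim\ (a+t\alpha,\,t(j'-j_{a-k+t\alpha})),\qquad a,k\in\Z/(n),\]
while applying the pair $\sigma_{(i,j)},\sigma_{(i',j')}$ to a common $(k,l)$ produces an analogous family in which the first coordinate shifts by $\beta$. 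I would then run a two-case analysis. If $\alpha=0$ (so $\beta\neq 0$), the first family forces second-coordinate shifts by $t\beta$ inside each $X_a$; $\mathcal{G}$-invariance, combined with hypothesis~(ii) applied to the differences $j_c-j_{c'}$ generated by iterating the second family, forces the subgroup of admissible second-coordinate shifts to be all of $\Z/(n)$ in every block, so each $X_a$ collapses to one $\sim$-class, and a further application of the propagation then collapses the block system. If $\alpha\neq 0$, setting $a-k=0$ exhibits an equivalence with first-coordinate shift $t\alpha\neq 0$ and second-coordinate shift involving $j_0-j_{t\alpha}$, which by~(i) is invertible; iterating through $\mathcal{G}$ then identifies all points of $(\Z/(n))^2$. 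Either way $\sim$ is the total relation, contradicting $|Y|>1$. The main obstacle is executing this propagation cleanly: hypotheses (i) and (ii) are exactly what prevents the generated subgroup of admissible shifts from stabilizing at a nontrivial proper subgroup of $\Z/(n)$, and the bookkeeping across the two families of equivalences must be tracked carefully.
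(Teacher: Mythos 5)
The first half of your proposal (indecomposability and irretractability via Proposition~\ref{sec6newind}) is correct and coincides with the paper's argument, including the identification $\sigma_j(k)=tk+j$, $d_{i,k}(l)=t(l-j_{k-i})$, the use of the translation $d_{0,k}^{-1}d_{0,k'}$ for transitivity of $W$, and the role of (\ref{condition}) in condition~4.

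The simplicity half, however, has a genuine gap. In your case $\alpha\neq 0$ the equivalence you produce, $(a,\,t(j-j_0))\sim(a+t\alpha,\,t(j'-j_{t\alpha}))$, joins points in \emph{different} blocks, and its second-coordinate shift is $t(\beta+j_0-j_{t\alpha})$, not $t(j_0-j_{t\alpha})$; hypothesis (i) does not make this quantity invertible (it can even vanish), and in any case a single cross-block equivalence with an invertible shift does not, upon ``iterating through $\mathcal{G}$'', identify all points --- the iteration only produces a chain of equivalences cycling through the blocks $X_{a+st\alpha}$, and whether it closes up into a nontrivial within-block shift requires exactly the bookkeeping you defer. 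The collapse mechanism the paper actually uses needs a pair with $\beta=0$: applying $\sigma_{(i,j)}$ and $\sigma_{(i',j)}$ to a common point lands in the \emph{same} block (the first coordinate of $\sigma_{(i,j)}(u,v)$ depends only on $j$), and the resulting within-block shift $t(j_0-j_{i-i'})$ is invertible by (i), so an entire block collapses and then everything does. For fibers in which no two points share a second coordinate, the paper does not propagate a single pair at all; it analyzes a whole fiber $f^{-1}(y)=\{(i_1,k_1),\dots,(i_m,k_m)\}$, shows via $f^{-1}(z)$ for $z=f(\sigma_{(0,0)}(0,0))$ that $\{k_l\}=\{i_l\}$ is a subgroup of $\Z/(n)$, uses (ii) to show this subgroup contains an invertible element and hence equals $\Z/(n)$, and then obtains $n=|\{j_{k_l-i_l}\}|$, which contradicts $j_i=j_{-i}$ unless $n=2$ --- a case settled by a separate two-element computation. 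None of this counting appears in your sketch, and your concluding claim that ``either way $\sim$ is the total relation'' overstates what can be extracted from a single equivalent pair; Remark~\ref{exnonsimple} in the paper shows that without (i) and (ii) such propagation genuinely stalls at a proper congruence, so the passage from ``some nonzero shift is admissible'' to ``all shifts are admissible'' is precisely the step that must be, and is not, proved.
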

\begin{proof}
    Consider the permutations $\sigma_j,d_{i,k}\in\Sym_{\Z/(n)}$ defined by
    $$\sigma_j(k)=tk+j \quad \mbox{ and }\quad d_{i,k}(l)=t(l-j_{k-i}),$$
    for all $i,j,k,l\in\Z/(n)$. Then
    $$\sigma_{(i,j)}(k,l)=(\sigma_j(k),d_{i,\sigma_j(k)}(l)),$$
    for all $i,j,k,l\in \Z/(n)$. Note that $\sigma_1\sigma^{-1}_0(k)=\sigma_1(t^{-1}k)=k+1$, for  all
    $k\in\Z/(n)$.
    Therefore the group $F=\langle\sigma_j\mid j\in\Z/(n)\rangle$ is
    transitive on $\Z/(n)$. Note also that
    $d^{-1}_{0,1+k}d_{0,k}(l)=d^{-1}_{0,1+k}(t(l-j_k))=l-j_k+j_{1+k}$ for all $k,l\in\Z/(n)$. Since
    there exists $k\in\Z/(n)$ such that $j_{1+k}-j_k$ is invertible,  the group $W=\langle d_{i,j}\mid
    i,j\in\Z/(n)\rangle$ is transitive on $\Z/(n)$. It is clear that
    condition (i) of Proposition \ref{sec6newind} is satisfied. Suppose
    that $d_{i,u}=d_{i',u}$, for all $u\in\Z/(n)$. Then
    $d_{i,u}(l)=t(l-j_{u-i})=d_{i',u}(l)=t(l-j_{u-i'})$, for all
    $u,l\in \Z/(n)$. Therefore $i=i'$ and condition (ii) of Proposition
    \ref{sec6newind} is satisfied.
Obviously condition 1. of Proposition \ref{sec6newind} is satisfied. Since $j_i=j_{-i}$, we have that
    $d_{i,k}=d_{k,i}$ and thus condition 3. of Proposition
    \ref{sec6newind} is satisfied.
    We shall check conditions 2. and 4. Note that
    \begin{eqnarray*}
            \sigma_j\sigma_{d^{-1}_{i,k}(l)}(u)&=&\sigma_j(tu+t^{-1}l+j_{i-k})\\
            &=&t(tu+t^{-1}l+j_{i-k})+j=t^2u+l+tj_{i-k}+j
        \end{eqnarray*}
        and
        \begin{eqnarray*}
            \sigma_l\sigma_{d^{-1}_{k,i}(j)}(u)&=&\sigma_l(tu+t^{-1}j+j_{k-i})\\
            &=&t(tu+t^{-1}j+j_{k-i})+l=t^2u+j+tj_{k-i}+l.
    \end{eqnarray*}
    Since $j_i=j_{-i}$, we have that $\sigma_j\circ\sigma_{d^{-1}_{i,k}(l)}=\sigma_l\circ\sigma_{d^{-1}_{k,i}(j)}$, thus condition 2. is satisfied.
    Now, (using assumption (\ref{condition}) in the fifth
    equality below), we have
  \begin{eqnarray*}
            d_{i,w}d_{\sigma^{-1}_j(k),\sigma^{-1}_j(w)}(u)&=&d_{i,w}(t(u-j_{\sigma^{-1}_j(k)-\sigma^{-1}_j(w)}))\\
            &=&d_{i,w}(t(u-j_{t^{-1}(k-w)}))\\
            &=&t(t(u-j_{t^{-1}(k-w)})-j_{i-w})\\
            &=&t^2u-t^2j_{t^{-1}(k-w)}-tj_{i-w}\\
            &=&t^2u-t^2(t^{-1}j_{k-w}-(t^{-1}-1)j_0)-tj_{i-w}\\
            &=&t^2u-tj_{k-w}+t^2(t^{-1}-1)j_0-tj_{i-w}
    \end{eqnarray*}
and thus
\begin{eqnarray*}
    d_{k,w}d_{\sigma^{-1}_l(i),\sigma^{-1}_l(w)}(u)&=&t^2u-tj_{i-w}+t^2(t^{-1}-1)j_0-tj_{k-w}.
 \end{eqnarray*}
     Hence,
     $d_{i,w}\circ
        d_{\sigma^{-1}_j(k),\sigma^{-1}_j(w)}=d_{k,w}\circ
        d_{\sigma^{-1}_l(i),\sigma^{-1}_l(w)}$,
        for all $i,j,k,l,w\in\Z/(n)$,    and thus condition 4. of
    Proposition \ref{sec6newind} is satisfied. Therefore, by
    Proposition \ref{sec6newind} $((\Z/(n))^2,r)$ is an indecomposable
    and irretractable solution of the YBE.

Assume now that conditions $(i)$ and $(ii)$ are satisfied.

Let $f: ((\Z/(n))^2,r) \rightarrow (Y,s)$ be an
epimorphism of solutions. Suppose that $f$ is not an isomorphism,
so that $|Y|<n^2$. Since $((\Z/(n))^2,r)$ is indecomposable, $(Y,s)$ also is indecomposable, and
by \cite[Lemma 1]{CCP},
$|f^{-1}(y)|=|f^{-1}(y')|$, for all $y,y'\in Y$. We write $s(y,z)=(\sigma'_y(z),\gamma'_z(y))$.

Suppose first that there exists $j\in\Z/(n)$
such that $f(i,j)=f(k,j)$ for some $i\neq k$. Then
$$f(\sigma_{(i,j)}(u,v))=f(\sigma_{(k,j)}(u,v)),$$
that is
$$f(tu+j,t(v-j_{tu+j-i}))=f(tu+j,t(v-j_{tu+j-k})),$$
for all $u,v\in\Z/(n)$. In particular, for $u=t^{-1}(i-j)$ and $v=w+j_{0}$, we get
$$f(i,tw)=f(i,t(w+j_{0}-j_{i-k})),$$
for all $w\in\Z/(n)$. Since $t$ and $j_0-j_{i-k}$ are invertible,
repeating this argument several
times, we obtain that
$$f(i,0)=f(i,w),$$
for all $w\in\Z/(n)$.  This implies that
\begin{eqnarray*}f(\sigma_{(i,0)}(i,v+j_{ti-i}))&=& \sigma'_{f(i,0)}(f(i,v+j_{ti-i}))\\
    &=&
    \sigma'_{f(i,u)}(f(i,v+j_{ti+u-i}))\\
    &=&f(\sigma_{(i,u)}(i,v+j_{ti+u-i})),\end{eqnarray*}
that is
$$f(ti,tv)=f(ti+u,tv),$$
for all $u\in\Z/(n)$,
and by the above argument,  we get that
$$f(x_{i,w})=f(x_{u,v}),$$
for all $u,v,w\in \Z/(n)$. Hence $|Y|=1$,  as desired.

Suppose that $|Y|>1$.   We have seen above that then for
every $y\in Y$, $f^{-1}(y)=\{ (i_1,k_1),\dots ,(i_m,k_m)\}$ with
$|\{k_1,\dots, k_m\}|=m$. Since $|f^{-1}(y)|=|f^{-1}(y')|=m$, for
all $y,y'\in Y$, we may assume, for a particular choice of $y$,
that $(i_1,k_1)=(0,0)$. Let $z=f(\sigma_{(0,0)}(0,0))\in Y$. Now
we have that
    $$f(\sigma_{(i_l,k_l)}(0,0))=f(\sigma_{(0,0)}(0,0))=z,$$
    for all $l=1,\dots ,m$. That is
    $$f^{-1}(z)\supseteq\{(k_l,-tj_{k_l-i_l})\mid l\in\{ 1,\dots ,m\}\}.$$
    Since $|\{k_l\mid l\in\{ 1,\dots ,m\} \}|=m=|f^{-1}(z)|$, we have that
    \begin{equation}\label{y1}f^{-1}(z)=\{(k_l,-tj_{k_l-i_l})\mid l\in\{ 1,\dots ,m\}\}.
    \end{equation}
    We also have that, for every $u\in\{ 1,\dots ,m\}$,
    $$f(\sigma_{(i_u,k_u)}(i_l,k_l))=f(\sigma_{(i_u,k_u)}(0,0))=z,$$
    for all $l=1,\dots ,m$. That is, for every $u\in\{ 1,\dots ,m\}$,
    \begin{equation}\label{y2}f^{-1}(z)=\{(ti_l+k_u,t(k_l-j_{ti_l+k_u-i_l}))\mid l\in\{ 1,\dots ,m\}\}.
    \end{equation}
    Hence
    \begin{equation}\label{x}\{k_l \mid l\in\{ 1,\dots ,m\}  \}=\{ ti_l+k_u \mid l\in\{ 1,\dots ,m\} \},
    \end{equation}
    for all $u\in\{ 1,\dots ,m\}$. In particular, for $u=1$, we have $k_1=0$ and
        $$\{k_l \mid l\in\{ 1,\dots ,m\}  \}=\{ ti_l \mid l\in\{ 1,\dots ,m\} \}.$$
Thus there  exists a permutation $\tau\in\Sym_m$ such that $ti_l=k_{\tau(l)}$, for all $ l\in\{ 1,\dots ,m\}$.
Now it is clear from (\ref{x}) that $H=\{k_l \mid l\in\{ 1,\dots ,m\}  \}$ is a subgroup of $\Z/(n)$. Furthermore the map
$$\eta:\{ i_l\mid l\in\{1,\dots ,m\}\}\longrightarrow\{k_l\mid l\in\{1,\dots, m\}\},$$
defined by $\eta(i_l)=ti_l=k_{\tau(l)}$ is an isomorphism, thus in fact
$$\{ i_l\mid l\in\{1,\dots ,m\}\}=\{ k_l\mid l\in\{1,\dots ,m\}\}$$
is the unique subgroup of $\Z/(n)$ of order $m$. Moreover, by (\ref{y1}) and (\ref{y2}) for $u=1$, we have that
$$(ti_l,t(k_l-j_{ti_l-i_l}))=(k_{\tau(l)},-tj_{k_{\tau(l)}-i_{\tau(l)}}).$$
Hence, since $t$ is invertible,
$k_l-j_{ti_l-i_l}=-j_{k_{\tau(l)}-i_{\tau(l)}}$. In particular
$$0=k_1\neq k_2=j_{ti_2-i_2}-j_{k_{\tau(2)}-i_{\tau(2)}}.$$
By condition $(ii)$, $k_2$ is invertible in $\Z/(n)$, thus its additive order is $n$.
Since $\{ k_l\mid l\in\{1,\dots ,m\}\}$ is a subgroup of order $m$ of $\Z/(n)$,
we have that $n=m$.

By (\ref{y1}), $f^{-1}(z)=\{(k_l,-tj_{k_l-i_l})\mid l\in\{ 1,\dots
,n\}\}$, and by  the description of $f^{-1}(z)$ obtained in the
first part of the proof
$$n=|\{ -tj_{k_l-i_l}\mid l\in\{ 1,\dots ,n\}\}|=|\{ j_{k_l-i_l}\mid l\in\{ 1,\dots ,n\}\}|.$$
Since $j_i=j_{-i}$, this implies that $n=2$. In this case, $t=1$, $j_0\neq j_1$ and
$$f(0,0)=f(1,1) \quad \mbox{and}\quad f(0,1)=f(1,0).$$
Furthermore, $|Y|=2$. Thus $Y=\{y_1,y_2\}$. Since $(Y,s)$ is
indecomposable, it is known that $\sigma'_{y_1}=\sigma'_{y_2}$.
Hence
$$f(1,j_1)=f(\sigma_{(0,0)}(1,0))=f(\sigma_{(1,0)}(1,0))=f(1,j_0).$$
Since $j_0\neq j_1$, we get $f(1,0)=f(1,1)$, in contradiction with
$|Y|=2$. This contradicts the assumption that $|Y|>1$. Therefore
$|Y|=1$ and the result follows.
\end{proof}

\begin{remark}\label{4simple}
Note that for $n=2$ there are two indecomposable and irretractable
solutions of the YBE constructed as in Theorem \ref{newexample2},
with $t=1$, and corresponding to either $(j_0,j_1)=(1,0)$ or
$(j_0,j_1)=(0,1)$. In this case, also conditions (i) and (ii) are
satisfied. Hence these two solutions are in fact simple. The
solution corresponding to $(j_0,j_1)=(0,1)$ is isomorphic to the
solution of Example \ref{examp1}. The solution corresponding to
$(j_0,j_1)=(1,0)$ is isomorphic to the solution of Example
\ref{examp2}.
    \end{remark}

In Theorem \ref{newexample2}, the assumptions on the elements
$j_i$ are very strong. Note that for $t=1$, condition
(\ref{condition}) is empty. In the following result we see that
for $t=1$ the other assumptions on the $j_i$ in Theorem
\ref{newexample2} can be relaxed to obtain indecomposable and
irretractable solutions and condition $(ii)$ is not needed to
obtain simple solutions.

\begin{theorem}\label{newexample2bis}
    Let $n>1$ be an integer.  Let $j_0,\dots, j_{n-1}\in
    \Z/(n)$ be elements such that $j_i=j_{-i}$,
    for all $i\in\Z/(n)$. Suppose that for every
    nonzero $i\in \Z/(n)$ there exists $k\in \Z/(n)$ such that
    $j_{i+k}\neq j_k$. Assume also that $\langle j_i\mid i\in\Z/(n)\rangle=\Z/(n)$.
    Let $r\colon (\Z/(n))^2\times (\Z/(n))^2\longrightarrow
    (\Z/(n))^2\times (\Z/(n))^2$ be the map defined by
    $r((i,j),(k,l))=(\sigma_{(i,j)}(k,l),\sigma^{-1}_{\sigma_{(i,j)}(k,l)}(i,j))$,
    where $\sigma_{(i,j)}(k,l)=(k+j,l-j_{k+j-i})$, for all
    $i,j,k,l\in\Z/(n)$. Then $((\Z/(n))^2,r)$ is an indecomposable and irretractable solution of
    the YBE.

    Assume, moreover, that
    $j_0-j_i$ is invertible for every nonzero $i\in\Z/(n)$. Then  $((\Z/(n))^2,r)$ is a simple solution of
    the YBE.
\end{theorem}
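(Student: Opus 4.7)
The proof has two parts: verifying that $((\mathbb{Z}/(n))^2,r)$ is indecomposable and irretractable, and establishing simplicity under the extra hypothesis.

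For the first part, I would invoke Proposition~\ref{sec6newind} with $Y=Z=\mathbb{Z}/(n)$, $\sigma_j(k)=k+j$ and $d_{i,k}(l)=l-j_{k-i}$. Condition~3 ($d_{i,k}=d_{k,i}$) follows immediately from $j_i=j_{-i}$; conditions~2 and~4 reduce to the commutativity of translations, since each $d_{i,k}$ is a translation by $-j_{k-i}$. Condition~1 is a direct unwinding, while conditions~(i) and~(ii) of the proposition correspond respectively to the obvious distinctness of the $\sigma_j$ and to the hypothesis ``for every nonzero $i$ some $k$ has $j_{i+k}\neq j_k$''. Transitivity of $F=\langle\sigma_j\rangle$ is clear, and transitivity of $W=\langle d_{i,k}\rangle$, which is the subgroup of translations by elements of $\langle j_i:i\in\mathbb{Z}/(n)\rangle$, uses precisely the assumption $\langle j_i\rangle=\mathbb{Z}/(n)$.

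For simplicity, let $f:((\mathbb{Z}/(n))^2,r)\to(Y,s)$ be a surjective morphism; I aim to show $|Y|\in\{1,n^2\}$. I follow the two-case scheme of the proof of Theorem~\ref{newexample2}. If some fiber contains two elements with the same second coordinate, $f(i,j)=f(k,j)$ with $i\neq k$, then $\sigma'_{f(i,j)}=\sigma'_{f(k,j)}$ specialises (using the explicit form of $\sigma$, with $u=i-j$ and $v=w+j_0$) to $f(i,w)=f(i,w+j_0-j_{i-k})$. Since $j_0-j_{i-k}$ is a unit by hypothesis, iteration makes $f(i,\cdot)$ constant, and the standard secondary iteration comparing $\sigma_{(i,0)}$ with $\sigma_{(i,u)}$ on suitable inputs propagates constancy to all rows, giving $f$ constant and so $|Y|=1$. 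Otherwise, every fiber has pairwise distinct second coordinates. Writing $f^{-1}(y)=\{(i_l,k_l):l=1,\dots,m\}$ for $y=f(0,0)$ with $(i_1,k_1)=(0,0)$ and $z=f(\sigma_{(0,0)}(0,0))$, computing $f^{-1}(z)$ as $\{(k_l,-j_{k_l-i_l})\}_l$ and, for each fixed $u$, as $\{(i_l+k_u,k_l-j_{i_l+k_u-i_u})\}_l$, matching the two descriptions shows that $\{i_l\}=\{k_l\}=:H$ is a subgroup of $\mathbb{Z}/(n)$ of order $m$, and that the bijection $\phi\colon H\to H$ with $\phi(k_l)=i_l$ satisfies $\phi(0)=0$ together with
\begin{equation*}
h=j_{\phi(h)+h'-\phi(h')}-j_{\phi(h)+h'-\phi(\phi(h)+h')}\qquad\text{for all }h,h'\in H.
\end{equation*}

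The key new step, replacing the use of condition~(ii) in Theorem~\ref{newexample2}, is the specialisation $h'=-\phi(h)\in H$: using $\phi(0)=0$ and $j_i=j_{-i}$, the two subscripts collapse to $-\phi(-\phi(h))$ and $0$ respectively, yielding $h=j_{\phi(-\phi(h))}-j_0$ for every $h\in H$. For $h\neq 0$, the bijectivity of $\phi$ forces $\phi(-\phi(h))\neq 0$, so by the simplicity hypothesis $j_0-j_{\phi(-\phi(h))}$ is a unit in $\mathbb{Z}/(n)$, hence so is $h$; therefore $\langle h\rangle=\mathbb{Z}/(n)\subseteq H$ and $H=\mathbb{Z}/(n)$, i.e.\ $m=n$. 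From here the argument finishes exactly as in Theorem~\ref{newexample2}: the $n$ distinct second coordinates $-j_{k_l-i_l}$ of $f^{-1}(z)$ force $|j(\mathbb{Z}/(n))|=n$, but $j_i=j_{-i}$ caps the image size at $\lceil(n+2)/2\rceil$, so $n=2$; then indecomposability of $(Y,s)$ with $|Y|=2$ forces $\sigma'_{y_1}=\sigma'_{y_2}$, producing $f(1,0)=f(1,1)$ and contradicting $m=2$. I expect the specialisation $h'=-\phi(h)$ to be the only step requiring a genuine new idea; everything else is either routine or parallels Theorem~\ref{newexample2}.
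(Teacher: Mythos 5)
Your proof is correct. The first part (via Proposition~\ref{sec6newind}) and Case~1 of the simplicity argument coincide with the paper's; where you genuinely diverge is in Case~2, in how you force the fibre size $m$ to equal $n$. The paper exploits indecomposability of $(X,r)$ directly: it chooses a word $\sigma_{(u_1,v_1)}\cdots\sigma_{(u_l,v_l)}$ sending $(0,-j_0)$ to $(k_2,-j_{k_2-i_2})$, computes that this word acts on all of $X$ as the translation $(v,w)\mapsto(v+k_2,\,w+j_0-j_{k_2-i_2})$, and iterates it inside the fibre $f^{-1}(z)$; invertibility of $j_0-j_{k_2-i_2}$ then yields $m\geq n$, hence $m=n$. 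You instead stay entirely within the combinatorial framework of Theorem~\ref{newexample2} (the subgroup $H=\{i_l\}=\{k_l\}$ and the bijection $\phi$ with $\phi(0)=0$) and extract the identity $h=j_{\phi(-\phi(h))}-j_0$ by the specialisation $h'=-\phi(h)$, so that every nonzero $h\in H$ is a unit and $H=\Z/(n)$. I checked the specialisation: both subscripts collapse as you claim, $\phi(-\phi(h))\neq 0$ for $h\neq 0$ by bijectivity, and $-\phi(h)\in H$ because $H$ is a subgroup, so the step is sound. Both routes are valid and of comparable length; yours keeps closer to the template of Theorem~\ref{newexample2} and avoids the global translation computation, while the paper's makes more visible how transitivity of $\mathcal{G}(X,r)$ moves points within a block. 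Two small remarks: your displayed relation uses the subscript $i_l+k_u-i_u$, which is the correct one (the paper's equation (\ref{y2}) has a typo $i_l$ in place of $i_u$ there); and the bound $\lceil(n+2)/2\rceil$ on the number of values of $j$ is not the right count for odd $n$ (it would let $n=3$ survive) --- the clean statement is that injectivity of $v\mapsto j_v$ together with $j_1=j_{-1}$ forces $1=-1$ in $\Z/(n)$, i.e.\ $n=2$, after which your endgame (and the paper's) goes through.
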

\begin{proof}
As in the proof of Theorem \ref{newexample2}, for $t=1$,
$\sigma_j(l)=l+j$ and $d_{i,k}(l)=l-j_{k-i}$,  for all $i,j,k,l\in \Z/(n)$, one can
see that $((\Z/(n))^2,r)$ is a solution of the YBE. Note that
$\langle \sigma_j \mid j\in \Z/(n)\rangle$ is a transitive
subgroup of $\Sym_{\Z/(n)}$ and, since $\langle j_i\mid
i\in\Z/(n)\rangle=\Z/(n)$, it is clear that $\langle d_{i,k}\mid
i,k\in\Z/(n)\rangle$ also is a transitive subgroup of
$\Sym_{\Z/(n)}$. Hence the solution $((\Z/(n))^2,r)$ is
indecomposable. Note that
    $$\sigma_{(i,j)}(u,v)=\sigma_{(i',j')}(u,v),$$
    for all $u,v\in\Z/(n)$, if and only if $(u+j,v-j_{u+j-i})=(u+j',v-j_{u+j'-i'})$, for all $u,v\in\Z/(n)$. This is equivalent to
    $j=j'$ and $j_{u+j-i}=j_{u+j-i'}$, for all $u\in\Z/(n)$. Since for every
    nonzero $i\in \Z/(n)$ there exists $k\in \Z/(n)$ such that
    $j_{i+k}\neq j_k$,  choosing $u=i'-j+k$ we get that $j_{u+j-i}=j_{u+j-i'}$, for all $u\in\Z/(n)$
     if and only if $i=i'$.
    Therefore the solution $((\Z/(n))^2,r)$ is irretractable.

    Assume now that
    $j_0-j_i$ is invertible for every nonzero $i\in\Z/(n)$.
    Let $f: ((\Z/(n))^2,r) \rightarrow (Y,s)$ be an
    epimorphism of solutions. Suppose that $f$ is not an isomorphism,
    so that $|Y|<n^2$. Since $((\Z/(n))^2,r)$ is indecomposable, $(Y,s)$ also is indecomposable, and
    by \cite[Lemma 1]{CCP},
    $|f^{-1}(y)|=|f^{-1}(y')|$, for all $y,y'\in Y$. We write $s(y,z)=(\sigma'_y(z),\gamma'_z(y))$.

    Suppose first that there exists $j\in\Z/(n)$
    such that $f(i,j)=f(k,j)$ for some $i\neq k$. As in the proof of Theorem \ref{newexample2}, one prove that $|Y|=1$.

    Suppose that $|Y|>1$. Hence for every $y\in Y$, $f^{-1}(y)=\{ (i_1,k_1),\dots ,(i_m,k_m)\}$
with $|\{k_1,\dots, k_m\}|=m$. Since $|f^{-1}(y)|=|f^{-1}(y')|=m$,
for all $y,y'\in Y$, we may assume, for a particular choice of $y$,
that $(i_1,k_1)=(0,0)$. Let $z=f(\sigma_{(0,0)}(0,0))\in Y$. Now we
have that
    $$f(\sigma_{(i_l,k_l)}(0,0))=f(\sigma_{(0,0)}(0,0))=z,$$
    for all $l=1,\dots ,m$. That is
    $$f^{-1}(z)\supseteq\{(k_l,-j_{k_l-i_l})\mid l\in\{ 1,\dots ,m\}\}.$$
    Since $|\{k_l\mid l\in\{ 1,\dots ,m\} \}|=m=|f^{-1}(z)|$, we have that
    \begin{equation}\label{y1bis}f^{-1}(z)=\{(k_l,-j_{k_l-i_l})\mid l\in\{ 1,\dots ,m\}\}.
    \end{equation}
In particular, by   the form of $f^{-1}(z)$ explained before,
we get that $j_0=j_{k_1-i_1}\neq j_{k_2-i_2}$. Since
$((\Z/(n))^2,r)$ is
    indecomposable, there exist $(u_1,v_1),\dots ,(u_l,v_l)\in (\Z/(n))^2$ such that
    \begin{equation}\label{ZZ}\sigma_{(u_1,v_1)}\cdots \sigma_{(u_l,v_l)}(0,-j_0)=(k_2,-j_{k_2-i_2}).
    \end{equation}
    Thus
    \begin{equation}\label{y2bis} k_2=v_1+\dots +v_l\quad \mbox{and}\quad -j_{k_2-i_2}=-j_0-j_{v_l-u_l}-j_{v_l+v_{l-1}-u_{l-1}}-\dots -j_{v_l+\dots +v_1-u_1}.
    \end{equation}
    Note that by (\ref{ZZ}) and (\ref{y2bis}),
    \begin{eqnarray}\label{lambdaY}
        \lefteqn{\sigma_{(u_1,v_1)}\cdots \sigma_{(u_l,v_l)}(v,w)}\notag\\
        &=&(v+v_1+\dots +v_l,w-j_{v_l-u_l}-j_{v_l+v_{l-1}-u_{l-1}}-\dots -j_{v_l+\dots +v_1-u_1})\notag\\
        &=&(v+k_2, w+j_0-j_{k_2-i_2})
    \end{eqnarray}
    for all $v,w\in\Z/(n)$. Since, by (\ref{y1bis}),
$f(k_2,-j_{k_2-i_2})=f(k_1,-j_{k_1-i_1})=f(0,-j_0)=z$,
 we have by (\ref{lambdaY}),
\begin{eqnarray*}
 z&=& f((\sigma_{(u_1,v_1)}\cdots \sigma_{(u_l,v_l)})^h(0,-j_0))\\
 &=&f(hk_2,-j_0 +h(j_{0}-j_{k_2-i_2}))
\end{eqnarray*}
for all positive integers $h$. Since $0=k_1-i_1\neq k_2-i_2$, by our assuption
    $j_0-j_{k_2-i_2}$ is invertible and therefore
    $$|\{-j_0+ h(j_0-j_{k_2-i_2})\mid
    h\in\Z/(n)\}|=n.$$ Hence,  again using the properties of $f^{-1}(y')$, we
have that $m=|f^{-1}(y')|\geq n$, for every $y'\in Y$. In particular, by (\ref{y1bis})
$$|\{(k_l,-j_{k_l-i_l})\mid l\in\{ 1,\dots ,n\}\}|=n=
|\{ j_{k_l-i_l}\mid l\in\{ 1,\dots ,n\}\}|.$$ Since $j_i=j_{-i}$,
this implies that $n=2$. Now the result follows as in the proof of
Theorem~\ref{newexample2}.
\end{proof}

\begin{remark}\label{exnonsimple}
    Let $n$ be an integer greater than $1$. Let $j_0,\dots ,j_{n-1}\in\Z/(n)$ be elements such that $j_i=j_{-i}$ for all $i\in\Z/(n)$.
    Suppose that the solution $(X,r)$, with $X=(\Z/(n))^2$ and $r((i,j),(k,l))=(\sigma_{(i,j)}(k,l),\sigma^{-1}_{\sigma_{(i,j)}(k,l)}(i,j))$,
    where $\sigma_{(i,j)}(k,l)=(k+j,l-j_{k+j-i})$, for all
    $i,j,k,l\in\Z/(n)$, is indecomposable and irretractable. Is $(X,r)$ a simple solution?
    The answer is negative. For example, take $n=6$, $j_0=1$ and $j_i=3$ for $i\in\Z/(6)\setminus\{ 0\}$. In this case
$$\sigma_{(i,j)}(k,l)=(k+j,l-3+2\delta_{k+j-i,0}).$$ By Theorem \ref{newexample2bis}, $((\Z/(6))^2,r)$, with
$r((i,j),(k,l))=(\sigma_{(i,j)}(k,l),\sigma^{-1}_{\sigma_{(i,j)}(k,l)}(i,j))$,
where $\sigma_{(i,j)}(k,l)=(k+j,l-j_{k+j-i})$, for all
$i,j,k,l\in\Z/(6)$, is an indecomposable and irretractable solution of the YBE. Let $Y=\{ 1,2\}$ and $s\colon
Y^2\longrightarrow Y^2$ be the map
$s(y_1,y_2)=(\sigma(y_2),\sigma(y_1))$, for all $y_1,y_2\in Y$,
where $\sigma=(1,2)\in\Sym_Y$. Thus $(Y,s)$ is a solution of the
YBE. Let $f\colon X\longrightarrow Y$ be the map defined by
$$f(x_{i,k})=\left\{ \begin{array}{ll}
1&\quad\mbox{if }k\in (2\Z)/(6),\\
2&\quad\mbox{otherwise}.
\end{array}\right.$$
It is easy to see that $f$ is an epimorphism of solutions. Hence $((\Z/(6))^2,r)$ is not simple.
\end{remark}

Leandro Vendramin has calculated all 685 irretractable solutions
of the YBE of cardinality $9$ and up to isomorphism there are only
$3$ indecomposable irretractable solutions, $(X,r_1)$ $(X,r_2)$
and $(X,r_3)$, where $X=\{1,2,3,4,5,6,7,8,9\}$ and
$r_1(x,y)=(\sigma_x(y),\sigma^{-1}_{\sigma_x(y)}(x))$, with

\begin{tabular}{ll}
    $\sigma_1=(1,6,7,9,2,5,4,8,3),\quad$ &$\sigma_2=(1,2,5,9,8,3,4,6,7)$,\\
    $\sigma_3=(1,6,5,9,2,3,4,8,7),\quad$ &$\sigma_4=(1,5,8,9,3,6,4,7,2)$,\\
    $\sigma_5=(1,3,6,9,7,2,4,5,8),\quad$ &$\sigma_6=(1,5,6,9,3,2,4,7,8)$,\\
    $\sigma_7=(1,4,9)(2,6,8),\quad$ &$\sigma_8=(1,4,9)(3,5,7)$,\\
    $\sigma_9=(2,6,8)(3,5,7)$,&\\
\end{tabular}

\noindent$r_2(x,y)=(\sigma_x(y),\sigma^{-1}_{\sigma_x(y)}(x))$, with

\begin{tabular}{ll}
    $\sigma_1=(1,6,3),\quad$ &$\sigma_2=(1,2,7,6,8,4,3,9,5)$,\\
    $\sigma_3=(1,5,9,6,7,2,3,4,8),\quad$ &$\sigma_4=(1,7,2,6,4,8,3,5,9)$,\\
    $\sigma_5=(4,5,7),\quad$ &$\sigma_6=(1,9,5,6,2,7,3,8,4)$,\\
    $\sigma_7=(1,9,7,6,2,4,3,8,5),\quad$ &$\sigma_8=(1,5,2,6,7,8,3,4,9)$,\\
    $\sigma_9=(2,8,9)$,&
\end{tabular}

\noindent and $r_3(x,y)=(\sigma_x(y),\sigma^{-1}_{\sigma_x(y)}(x))$, with

\begin{tabular}{ll}
    $\sigma_1=(1,6,3)(2,9,8)(4,7,5),\quad$ &$\sigma_2=(1,2,5,3,9,4,6,8,7)$,\\
    $\sigma_3=(1,4,2,3,7,9,6,5,8),\quad$ &$\sigma_4=(1,7,9,3,5,8,6,4,2)$,\\
    $\sigma_5=(1,3,6)(2,9,8)(4,5,7),\quad$ &$\sigma_6=(1,8,7,3,2,5,6,9,4)$,\\
    $\sigma_7=(1,8,5,3,2,4,6,9,7),\quad$ &$\sigma_8=(1,4,9,3,7,8,6,5,2)$,\\
    $\sigma_9=(1,3,6)(2,8,9)(4,7,5)$.&
\end{tabular}

\noindent There are 5 irretractable square-free solutions of the YBE of cardinality $9$.

\begin{remark}
    One can check that the three indecomposable and irretractable solutions of the YBE of cardinality $9$ are simple solutions.
    The solution $(X,r_1)$ is isomorphic to $((\Z/(3))^2,r_1')$ with $r_1'((i,j),(k,l))=(\sigma_{(i,j)}(k,l),\sigma^{-1}_{\sigma_{(i,j)}(k,l)}(i,j))$,
    where $\sigma_{(i,j)}(k,l)=(k+j,l-j_{k+j-i})$, for all
    $i,j,k,l\in\Z/(3)$, and $j_0=0$, $j_1=j_2=1$.

    The solution $(X,r_2)$ is isomorphic to $((\Z/(3))^2,r_2')$ with $r_2'((i,j),(k,l))=(\sigma_{(i,j)}(k,l),\sigma^{-1}_{\sigma_{(i,j)}(k,l)}(i,j))$,
    where $\sigma_{(i,j)}(k,l)=(k+j,l-j_{k+j-i})$, for all
    $i,j,k,l\in\Z/(3)$, and $j_0=1$, $j_1=j_2=0$.

    The solution $(X,r_3)$ is isomorphic to $((\Z/(3))^2,r_3')$ with $r_3'((i,j),(k,l))=(\sigma_{(i,j)}(k,l),\sigma^{-1}_{\sigma_{(i,j)}(k,l)}(i,j))$,
    where $\sigma_{(i,j)}(k,l)=(k+j,l-j_{k+j-i})$, for all
    $i,j,k,l\in\Z/(3)$, and $j_0=1$, $j_1=j_2=2$.
    \end{remark}

Note that all the finite simple solutions of the YBE
constructed so far have cardinality $n^2$ for some integer $n>1$.
We shall construct now finite simple solutions of the YBE of
non-square cardinality.

\begin{theorem}\label{simplemn}
Let $n,m>1$ be  integers.  Let
    $$X=\Z/(mn)\times (n\Z)/(mn)\cong \Z/(mn)\times \Z/(m).$$
Consider $\sigma_{(i,j)}\in \Sym_{X}$ defined by
$$\sigma_{(i,j)}(k,l) = \left\{ \begin{array}{ll}(k+j/n,l) &  \mbox{ if } i\neq k+j/n \\
(k+j/n,l+n ) & \mbox{ if } i=k+j/n  \end{array} \right. ,$$
for all $(i,j),(k,l)\in X$,
where $j/n$ means the unique element of the subset $\{ 0,1,\dots, m-1\}$ of $\Z/(mn)$ such that $n\cdot (j/n)=j\in\Z/(mn)$.
Let $r\colon X\times X\longrightarrow X\times X$ be the map defined by $r((i,j),(k,l))=(\sigma_{(i,j)}(k,l),\sigma^{-1}_{\sigma_{(i,j)}(k,l)}(i,j))$,
for all $(i,j),(k,l)\in X$. Then $(X,r)$ is a simple solution of the YBE.
\end{theorem}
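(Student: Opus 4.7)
My plan is first to recognise the solution as fitting the framework of Proposition~\ref{sec6newind} and then to prove simplicity by a fiber-by-fiber analysis modelled on Theorems~\ref{newexample2} and \ref{newexample2bis}. Writing $J := (n\Z)/(mn)$, I identify $\sigma_j(k) := k + j/n$ on $\Z/(mn)$ and $d_{i,k}(l) := l + n\delta_{i,k}$ on $J$, so that $\sigma_{(i,j)}(k,l) = (\sigma_j(k), d_{i,\sigma_j(k)}(l))$. Conditions 1--4 of Proposition~\ref{sec6newind} reduce to the symmetry of the Kronecker $\delta_{i,k}$ (giving condition 3) and to the fact that the translation $\sigma_j$ preserves equalities, so $\delta_{\sigma^{-1}_j(k), \sigma^{-1}_j(w)} = \delta_{k,w}$ (giving conditions 2 and 4). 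Conditions (i) and (ii) are immediate from distinct $j$'s producing distinct translations and from $\delta_{i,k}$ detecting $i$. Transitivity of $F = \langle \sigma_j \mid j\in J\rangle$ on $\Z/(mn)$ holds because $\{j/n : j\in J\} = \{0, 1, \dots, m-1\}$ contains the generator $1$, and transitivity of $W = \langle d_{i,k}\rangle$ on $J$ follows from $d_{k,k}(l) = l+n$ together with $n$ generating $J$. Thus $(X,r)$ is an indecomposable irretractable solution of the YBE.

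For simplicity, let $f\colon (X,r)\to (Y,s)$ be an epimorphism with $|Y|>1$, and suppose for contradiction that $f(i_1, j_1) = f(i_2, j_2)$ for some $(i_1, j_1) \neq (i_2, j_2)$. I split into three cases. The pivotal case is $j_1 = j_2 =: j$ with $i_1 \neq i_2$: applying the homomorphism property to $\sigma_{(u,v)}$ at $u = i_1 + v/n$ yields $f(i_1 + v/n, j+n) = f(i_2 + v/n, j)$, and specialising to $v = 0$ combined with the original equality gives $f(i_1, j+n) = f(i_1, j)$. A short iteration then forces the columns $\{i_1\} \times J$ and $\{i_2\} \times J$ to each lie inside a single fiber, and propagation through the remaining $\sigma_{(u,v)}$ shows that every column $\{a\} \times J$ lies in a single fiber. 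Finally, applying the congruence to $(0,0) \sim (0,n)$ via $\sigma_{(0,0)}$ and $\sigma_{(0,n)}$ yields $(0, n) \sim (1, 0)$, merging the columns indexed by $0$ and $1$; since $1$ generates $\Z/(mn)$, all columns collapse into a single fiber, forcing $|Y| = 1$, a contradiction.

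Suppose instead that $i_1 = i_2 = i$ and $j_1 \neq j_2$, and set $\delta := j_2 - j_1 \neq 0$ in $J$. Iterated applications of $f\circ\sigma_{(u,v)}$ promote the initial equality to the global relation $f(a,c) = f(a, c+\delta)$ for every $(a,c) \in X$. The congruence applied to $(0,0) \sim (0, \delta)$ via $\sigma_{(0,0)}$ and $\sigma_{(0,\delta)}$ gives $(0, n) \sim (\delta/n, 0)$, and a further congruence step with $\sigma_{(u,v)}$ at the precise choice $u = v/n + \delta/n$ converts this into $(a, n) \sim (a + \delta/n, n)$ for every $a$. Since $\delta/n \in \{1, \dots, m-1\}$ is nonzero in $\Z/(mn)$, this is a coincidence of the pivotal type, already excluded. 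The remaining case $i_1 \neq i_2$ and $j_1 \neq j_2$ reduces directly: applying $\sigma_{(i_1, 0)}$ to $f(i_1, j_1) = f(i_2, j_2)$ gives $f(i_1, j_1 + n) = f(i_2, j_2) = f(i_1, j_1)$, a same-first-coordinate coincidence handled by the second case.

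Therefore $f$ must be injective, hence bijective, hence an isomorphism, so $(X,r)$ is simple. The main obstacle is the second case: one has to combine the global shift relation with the congruence at exactly the right $\sigma$-translate --- the choice $u = v/n + \delta/n$ annihilates the Kronecker delta on one side of the congruence while activating it on the other --- in order to conjure a fresh coincidence with equal second coordinates that reduces to the pivotal case. The remaining steps are routine given the explicit description of $\sigma_{(i,j)}$.
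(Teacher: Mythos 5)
Your proof is correct, but your simplicity argument follows a genuinely different route from the paper's. The first half is identical: both verify the hypotheses of Proposition~\ref{sec6newind} for $\sigma_j(k)=k+j/n$ and $d_{i,k}(l)=l+n\delta_{i,k}$, with every condition reducing to the symmetry and translation-invariance of the Kronecker delta. For simplicity, the paper instead observes that $\sigma_{(1,n)}$ is a single cycle of length $nm^2=|X|$ on $X$, so the fibres of a nontrivial epimorphism, being imprimitivity blocks, must consist of the points at mutual distance a multiple of some $q>1$ along that cycle; it then kills the three possibilities $q\mid mn$, $q<mn$ with $q\nmid mn$, and $q>mn$ by exhibiting elements $\sigma_{(i,j)}$ incompatible with such a block system. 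You never invoke the block structure at all: you take a single coincidence $f(x)=f(x')$ and propagate it through $f\circ\sigma_z=\sigma'_{f(z)}\circ f$, both by acting on the coincident pair and by letting the pair act, until all of $X$ collapses, with the three cases ordered so that each reduces to the previous one. This is the collapse-propagation technique the paper itself uses inside Theorems~\ref{newexample2} and~\ref{newexample2bis}, and it is more elementary (no input about blocks of a group containing a full cycle is needed) at the price of a longer case analysis. Two harmless imprecisions: in Case B the relation $(a,n)\sim(a+\delta/n,n)$ is only obtained for $a=v/n\in\{0,\dots,m-1\}$ rather than for every $a\in\Z/(mn)$, but a single instance (say $a=0$) already yields the pivotal coincidence; and the ``short iteration'' in Case A is most quickly closed by noting that $f(i_1,j)=f(i_2,j)$ forces $f(\sigma_{(i_1,j)}(k,l))=f(\sigma_{(i_2,j)}(k,l))$ for all $(k,l)$, which at $k=i_1-j/n$ gives $f(i_1,l)=f(i_1,l+n)$ for all $l$ simultaneously.
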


\begin{proof} For every $j\in (n\Z)/(mn)$, let $\sigma_j\in \Sym_{\Z/(mn)}$ be the permutation defined by
    $\sigma_j(k)=k+j/n$, for all $k\in\Z/(mn)$. For $i,k\in\Z/(mn)$, we define
    $d_{i,k}\in \Sym_{(n\Z)/(mn)}$ by
    $$d_{i,k}(l)=l+n\delta_{i,k},$$
    for all $l\in (n\Z)/(mn)$. Note that $\langle\sigma_n\rangle$ is a transitive subgroup of $\Sym_{\Z/(mn)}$,
    and $\langle d_{0,0}\rangle$ is a transitive subgroup of $\Sym_{(n\Z)/(mn)}$. Also it is easy to check that
conditions of Proposition~\ref{sec6newind} are satisfied. Hence $(X,r)$ is an indecomposable and irretractable solution of the YBE.

Note that $\sigma_{(1,n)}$ is a cycle of length $nm^2$
of the form:
\begin{eqnarray} &&  (0,0)\mapsto (1,n)\mapsto  (2,n) \mapsto
    \cdots \mapsto (mn-1,n)\mapsto \nonumber\\
    &&  (0,n)\mapsto (1,2n)\mapsto (2,2n) \mapsto
    \cdots \mapsto (mn-1,2n)\mapsto \nonumber\\
    &&\cdots \mapsto \nonumber\\
    &&(0,(m-1)n)\mapsto (1,0)\mapsto (2,0) \mapsto
    \cdots \mapsto \nonumber\\
    &&(mn-1,0)\mapsto (0,0)  \label{full cycle}
\end{eqnarray}
 We know that, if
$f:(X,r)\rightarrow (T,s)$ is an epimorphism of solutions, and
$1<|T|<nm^2$, then $f^{-1}(t), t\in T$, form a system of
imprimitivity blocks in $X$ (with respect to ${\mathcal
    G}(X,r)$). By the comment after Remark~\ref{remsimple}, $(X,r)$ is a dynamical
extension of the solution $(T,s)$. So,
elements from the same block permute the set of blocks in
the same way. And there exists an integer $q>1$ such that $q| nm^2$ and each block consists of all
elements in the full cycle (\ref{full cycle}) which are in distance being a multiple
of $q$. If $q|mn$, then $mn=dq$ for some positive integer $d$ and the blocks are of the form $B_{i}=
\{ (i+qk, j) \mid k=0,1,\ldots, d-1;\; j\in (n\Z)/(mn)\}$, for $i\in\{ 0,1,\dots ,q-1\}\subseteq\Z/(mn)$. But then $\sigma_{(1,0)}(B_i)\subseteq B_i$ for every block
$B_i$ and $\sigma_{(1,n)}(B_i)\subseteq B_{i+1}$, thus
 $\sigma_{(1,0)}$ and $\sigma_{(1,n)}$ determine different permutations of the set of blocks, while
$(1,0),(1,n)\in B_1$. So $X$ cannot come from a dynamical
cocycle, a contradiction. Hence $q$ is not a divisor of $mn$. Suppose that $q<mn$. In this case, $mn=dq+t$ for some positive integers $d,t$ with $t<q$. Then the block containing $(0,0)$ is
$$B_{(0,0)}=\{ (0,0), (q,n), \dots, (dq,n), ((d+1)q,2n),  \dots\}.$$
Now $\sigma_{(0,0)}(0,0)=(0,n)\notin B_{(0,0)}$, but $\sigma_{(0,0)}(q,n)=(q,n)\in B_{(0,0)}$, a contradiction. Therefore $q>mn$.
Hence $q=dmn+t$ for some positive integers $d,t$ with $t<mn$. Then
$$B_{(0,0)}=\{ (0,0), (t,(d+1)n), \dots\}.$$
Now $\sigma_{(0,0)}(0,0)=(0,n)\notin B_{(0,0)}$, but $\sigma_{(0,0)}(t,(d+1)n)=(t,(d+1)n)\in B_{(0,0)}$, a contradiction.
It follows that
$(X,r)$ is a simple solution.
\end{proof}

\begin{remark} \label{413}
    Note that if $p_1, \dots p_k$ are distinct primes and $m_1,\dots ,m_k$ are positive integers and $m_1>1$,
    then we can take $m=p_1$ and $n=p_1^{m_1-2}p_2^{m_2}\cdots p_k^{m_k}$ in Theorem \ref{simplemn} to obtain a simple solution
    $(X,r)$ of the YBE of cardinality $m^2n=p_1^{m_1}\cdots p_k^{m_k}$.
\end{remark}

\section{Simple solutions of order $p^2$}\label{sec5}

Let $p$ be a prime number. Let $X=\Z/(p)\times\Z/(p)$. We
will study the simple solutions $(X,r)$ of the YBE, where
$r(x,y)=(\sigma_x(y),\sigma^{-1}_{\sigma_x(y)}(x))$, for all
$x,y\in X$ and
$$\sigma_{(i,j)}(k,l) =(\sigma_{j}(k),d_{i,\sigma_j(k)}(l)),$$
for all $i,j,k,l\in \Z/(p)$.  The main result in this section
shows that all the simple solutions of this form are isomorphic to
the simple solutions of cardinality $p^2$ constructed as in
Theorem \ref{newexample2}, but with weaker conditions on the
parameters $j_0,\dots ,j_{p-1}$.

\begin{theorem}\label{newexample}
Let $p$ be a prime number. Let $t\in\Z/(p)$
be a nonzero element. Let $j_0,\dots, j_{p-1}\in
\Z/(p)$ be elements such that $j_i=j_{-i}$ and
$$j_{t^si}=t^sj_i-(t^s-1)j_0,$$
for all $i\in\Z/(p)$ and all $s\in\Z$.  Suppose that $j_{i}\neq j_k$ for some $i\neq k$.
Let $r\colon (\Z/(p))^2\times (\Z/(p))^2\longrightarrow
(\Z/(p))^2\times (\Z/(p))^2$ be the map defined by
$r((i,j),(k,l))=(\sigma_{(i,j)}(k,l),\sigma^{-1}_{\sigma_{(i,j)}(k,l)}(i,j))$,
where $\sigma_{(i,j)}(k,l)=(tk+j,t(l-j_{tk+j-i}))$, for all
$i,j,k,l\in\Z/(p)$. Then $((\Z/(p))^2,r)$ is a simple solution of
the YBE.
\end{theorem}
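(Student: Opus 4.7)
The plan is to follow the overall scheme of the proof of Theorem \ref{newexample2}, exploiting the primality of $p$ to weaken the needed nondegeneracy of the $j_i$'s. I would first check that $((\Z/(p))^2, r)$ is indecomposable and irretractable by verifying the conditions of Proposition \ref{sec6newind}. Conditions 1--4 and (i) are verified by direct computation, formally identical to what is done in Theorem \ref{newexample2} (they only use $j_i = j_{-i}$ and the twisting identity). For the two places where condition (i) of Theorem \ref{newexample2} was invoked --- namely transitivity of $W=\langle d_{i,k}\rangle$ and condition (ii) of Proposition \ref{sec6newind} --- I would use the observation that $\Z/(p)$ is a prime field: every nonzero element generates $\Z/(p)$ additively, and any function $\Z/(p)\to\Z/(p)$ admitting a nonzero period must be constant. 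Transitivity holds because $d_{0,a}^{-1}\circ d_{0,0}$ is translation by $j_a-j_0$, and the hypothesis that some $j_i\neq j_k$ prevents all $j_a$ from equaling $j_0$; condition (ii) holds because $d_{i,k}=d_{i',k}$ for every $k$ forces $j$ to have period $i-i'$, so $i=i'$.

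To prove simplicity, let $f\colon((\Z/(p))^2,r)\to(Y,s)$ be an epimorphism of solutions with $|Y|>1$. Since the fibres of $f$ have equal size (see \cite[Lemma~1]{CCP}), $|Y|\in\{p,p^2\}$, so it suffices to exclude $|Y|=p$. Assume $|Y|=p$ and split into two subcases, as in Theorem \ref{newexample2}. In Subcase A there exist $i\neq k$ and $j$ with $f(i,j)=f(k,j)$. From $f\circ\sigma_{(i,j)}=f\circ\sigma_{(k,j)}$, specializing the first argument via $u=t^{-1}(a-j)$ and changing variables in the second coordinate yields
\begin{equation*}
f(a,x)=f\bigl(a,\,x+t(j_{a-i}-j_{a-k})\bigr)\quad\text{for all }a,x\in\Z/(p).
\end{equation*}
If $j_{a-i}=j_{a-k}$ for every $a$, then $j$ would be periodic of period $k-i\neq 0$ and hence constant --- a contradiction. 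So some $a_0$ yields a nonzero increment $t(j_{a_0-i}-j_{a_0-k})$, which generates $\Z/(p)$; iteration shows that $f(a_0,\cdot)$ is constant, say equal to $y_0$. For any $(k',l)$, applying the solution-homomorphism identity to $\sigma_{(a_0,v)}(k',l)$ with $v=a_0-tk'$ (so that the image has first coordinate $a_0$) gives
\begin{equation*}
y_0=f(a_0,t(l-j_0))=f(\sigma_{(a_0,v)}(k',l))=\sigma'_{y_0}(f(k',l)),
\end{equation*}
whence $f(k',l)=(\sigma'_{y_0})^{-1}(y_0)$ is a single element, contradicting $|Y|=p$.

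In Subcase B no such triple $(i,k,j)$ exists, so within each fibre the second coordinate determines the element; since every fibre has size $p$, the set of second coordinates in each fibre is all of $\Z/(p)$. At this point the Subcase~B argument in the proof of Theorem \ref{newexample2} applies essentially verbatim: examining the fibre $z=f(\sigma_{(0,0)}(0,0))=f(0,-tj_0)$ forces $\{k_l\}=\{ti_l\}$ to be the unique subgroup of $\Z/(p)$ of order $p$, and then $|\{j_{k_l-i_l}\}|=p$ combined with $j_i=j_{-i}$ forces $p=2$. The case $p=2$ is closed exactly as at the end of the proof of Theorem \ref{newexample2}: the unique indecomposable solution of order $2$ has $\sigma'_{y_1}=\sigma'_{y_2}$, which allows one to derive $f(1,j_0)=f(1,j_1)$; since $j_0\neq j_1$, this contradicts the structure of the two fibres established one line earlier. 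The main obstacle, relative to Theorem \ref{newexample2}, is Subcase A, where invertibility of $j_0-j_{i-k}$ is no longer a hypothesis; the remedy is to let $a$ vary in the derived identity and use non-constancy of $j$ over the prime field $\Z/(p)$ to rule out any nontrivial period.
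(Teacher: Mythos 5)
Your proof is correct, and the preliminary part (indecomposability and irretractability) matches the paper, which likewise reduces to the first half of Theorem~\ref{newexample2} after observing that nonzero elements of $\Z/(p)$ are invertible. For simplicity, however, you take a genuinely different route. The paper's own argument is much shorter: once $|Y|=p$ with all fibres of size $p$ is forced, it invokes the classification of indecomposable solutions of prime cardinality (\cite[Theorem 2.13]{ESS}), so that $\sigma'_y=\sigma$ is a single fixed $p$-cycle for every $y\in Y$ and hence $f(\sigma_{(i,j)}(k,l))=\sigma(f(k,l))$ independently of $(i,j)$; evaluating at $(0,0)$ along the two families $\sigma_{(k,0)}$ and $\sigma_{(j-k-1,j)}$ then places the $p+1$ distinct points $(0,-tj_k)$ and $(j,-tj_{k+1})$, $j\in\Z/(p)$ (for any $k$ with $j_k\neq j_{k+1}$, which exists since the $j_i$ are not all equal), into one fibre, an immediate contradiction. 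You instead transplant the two-subcase fibre analysis of Theorem~\ref{newexample2}, replacing its invertibility hypotheses by the observations that every nonzero element of $\Z/(p)$ generates the additive group and that a nonconstant function on $\Z/(p)$ has no nonzero period; your closing of Subcase A via $\sigma'_{y_0}(f(k',l))=y_0$ is in fact a cleaner finish than the iteration used there, and in Subcase B you correctly note that $m=p$ is automatic, so the subgroup argument that Theorem~\ref{newexample2} needs to pin down $m$ becomes superfluous. In short, the paper buys brevity by quoting the prime-order classification of indecomposable solutions, while your version is longer but makes explicit exactly which hypotheses of Theorem~\ref{newexample2} primality renders unnecessary; both are valid.
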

\begin{proof} It is easy to see that since $p$ is prime and there exist $i\neq k$ such that $j_i\neq j_k$, we have that for
every nonzero $u\in \Z/(p)$ there exists $l\in \Z/(p)$ such that
$j_{u+l}-j_l$ is invertible. Hence, by the first part of Theorem
\ref{newexample2}, $((\Z/(p))^2,r)$ is an indecomposable and
irretractable solution of the YBE.

Let $f: ((\Z/(p))^2,r) \rightarrow (Y,s)$ be an epimorphism of
solutions. Suppose that $f$ is not an isomorphism and that $|Y|>1$. Since $((\Z/(p))^2,r)$ is indecomposable, $(Y,s)$ also is indecomposable,
        and  by \cite[Lemma 1]{CCP}, $|Y|=p$ and $|f^{-1}(y)|=p$, for all $y\in Y$. We write $s(y,z)=(\sigma_y(z), \gamma_z(y))$, for all $y,z\in Y$.
        It is known that $\sigma_y=\sigma_z=\sigma$ for all $y,z\in Y$, where $\sigma\in\Sym_Y$ is a cycle of length $p$  (see \cite{ESS}). The epimorphism $f$ induces an
        epimorphism of groups
        $\tilde{f}\colon\mathcal{G}((\Z/(p))^2,r)\longrightarrow \mathcal{G}(Y,s)$, such that $\tilde{f}(\sigma_{(i,j)})=\sigma_{f(i,j)}=\sigma$,
        for all $i,j\in\Z/(p)$. Furthermore, $f(\sigma_{(i,j)}(k,l))=\sigma (f(k,l))$, for all
        $i,j,k,l\in\Z/(p)$.
        Now
        $\sigma_{(k,0)}(0,0)=(0,-tj_k)$
        and
        $\sigma_{(j-k-1,j)}(0,0)=(j,-tj_{k+1})$,
        for all $j,k\in \Z/(p)$. Therefore
        $$f(0,-tj_k)=f(\sigma_{(k,0)}(0,0))=\sigma (f(0,0))=f(\sigma_{(j-k-1,j)}(0,0))=f(j,-tj_{k+1}),$$
        for all $j,k\in\Z/(p)$. Since there exists $k\in \Z/(p)$ such that $j_k\neq j_{k+1}$, we have that
        $|f^{-1}(f(0,-tj_k))|>p$, a contradiction. Therefore
        either $f$ is an isomorphism or $|Y|=1$.
    Hence, $((\Z/(p))^2,r)$ is a simple solution of the YBE.
\end{proof}

\begin{remark}
The hypothesis in the statement of Theorem \ref{newexample}
implies that the multiplicative order of $t$ is odd. Indeed if the
multiplicative order of $t$ is even, then $p$ should be odd and
there exists a positive integer $s$ such that $t^s=-1$. In this
case, the condition $j_{t^si}=t^sj_i-(t^s-1)j_0,$ implies that
$j_{-i}=-j_{i}+2j_0$. Since $j_{-i}=j_i$ and $p$ is odd, we get
that $j_i=j_0$ for all $i$, in contradiction with the hypothesis
that $j_i\neq j_k$ for some $i\neq k$.
\end{remark}

 We are now ready for the main result of this section.
\begin{theorem}\label{thmmain}
Let $p$ be a prime number. Let $X=\Z/(p)\times \Z/(p)$. Let $(X,r)$ be an indecomposable and irretractable solution of the
YBE such that $r((i,j),(k,l))=(\sigma_{i,j}(k,l),\sigma^{-1}_{\sigma_{i,j}(k,l)}(i,j))$, where
\begin{eqnarray} \label{form2} \sigma_{ij}(k,l) = (\sigma_{j}(k),d_{i, \sigma_{j}(k)}(l)),
\end{eqnarray}
for all $i,j,k,l\in\Z/(p)$. Then $(X,r)$ is isomorphic to one of the
simple solutions constructed in Theorem~\ref{newexample}.
\end{theorem}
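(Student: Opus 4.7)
My plan is to extract the compatibility conditions of Proposition~\ref{sec6newind} and then exploit primality of $p$ to put $(\sigma_j, d_{i,k})$ into the normal form of Theorem~\ref{newexample}. First, I would analyse the shape of $d_{i,k}$. Setting $i=k$ in condition~4 and using $d_{i,k}=d_{k,i}$ yields $d_{\sigma_j^{-1}(w),\sigma_j^{-1}(u)} = d_{\sigma_l^{-1}(w),\sigma_l^{-1}(u)}$ for all $j,l,w,u$. Equivalently, writing $H := \langle\sigma_0^{-1}\sigma_j\mid j\in\Z/(p)\rangle \leq F := \langle\sigma_j\mid j\in\Z/(p)\rangle$, one has $d_{\tau(w),\tau(u)} = d_{w,u}$ for every $\tau\in H$. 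Irretractability~(i) forces $H\neq 1$, and since $F$ is transitive on a set of prime cardinality, $H$-orbits have a common size, so $H$ itself acts transitively. Meanwhile, condition~2 with $l=0$ gives $\sigma_{\psi_{i,k}(j)} = \sigma_0^{-1}\sigma_j\sigma_{\psi_{i,k}(0)}$, where $\psi_{i,k} := d_{i,k}^{-1}$; writing $\tau_j := \sigma_0^{-1}\sigma_j \in H$, this becomes $\tau_{\psi_{i,k}(j)} = \alpha(\tau_j)\,\tau_{\psi_{i,k}(0)}$ where $\alpha$ is conjugation by $\sigma_0^{-1}$. Since the right-hand side lies in $H$, $\alpha$ stabilises $H$, hence $H\triangleleft F$. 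Being a transitive normal subgroup of prime degree, $H$ contains a $p$-cycle $\rho$; after relabelling $\Z/(p)$ so that $\rho(k)=k+1$, we obtain $d_{i,w} = e_{w-i}$ for a function $a\mapsto e_a\in\Sym_{\Z/(p)}$ with $e_{-a}=e_a$.

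The second step is to show that $F\subseteq \AGL(1,p)$ with all $\sigma_j$ sharing a common slope $t\in(\Z/(p))^\ast$. The idea is that the closure relation $\tau_{\psi_{i,k}(j)} = \alpha(\tau_j)\tau_{\psi_{i,k}(0)}$, combined with the rigidity coming from irretractability~(ii) (which forbids $e$ from admitting a nontrivial period) and with the primality of $p$, forces $F$ to normalise the translation subgroup and thus to lie in $\AGL(1,p)$. Together with $\sigma_j = \sigma_0\tau_j$, this yields $\sigma_j(k) = tk + b_j$ for a common slope $t$ and $b_j$'s that are distinct by~(i). Relabelling the index by $j\mapsto b_j$ gives $\sigma_j(k) = tk + j$, and substituting back into condition~2 then pins down $e_a(l) = t(l - j_a)$ for some $j_a\in\Z/(p)$ with $j_{-a}=j_a$.

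Finally, with these explicit forms, condition~4 in its full generality becomes, by direct substitution, the identity $j_{t^s a} = t^s j_a - (t^s-1)j_0$ for all $s\in\Z$ and $a\in\Z/(p)$, while irretractability~(ii) translates into the assertion that $j_i\neq j_k$ for some $i\neq k$. These are exactly the hypotheses of Theorem~\ref{newexample}, so the composite coordinate change constructed in the previous two steps realises the desired isomorphism of solutions. The main obstacle is the second step: generic transitive subgroups of $\Sym_{\Z/(p)}$ need not lie inside $\AGL(1,p)$ (witness $\Sym_p$ and $\Alt_p$), so forcing $F\subseteq \AGL(1,p)$ with one common slope must be extracted genuinely from the explicit relation $\sigma_{\psi_{i,k}(j)} = \sigma_0^{-1}\sigma_j\sigma_{\psi_{i,k}(0)}$ and from the rigidity imposed on $e$ by condition~(ii); primality of $p$ enters crucially through $\gcd(|(\Z/(p))^\ast|, p) = 1$, which prevents any nontrivial multiplicative cocycle from surviving on the Sylow $p$-subgroup of $F$.
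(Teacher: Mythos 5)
Your first and third steps track the paper: deriving $d_{u,v}=d_{\tau(u),\tau(v)}$ for $\tau$ in the normal transitive subgroup generated by the $\sigma_l^{-1}\sigma_j$, conjugating a $p$-cycle of that subgroup into the translation $k\mapsto k+1$ so that $d_{i,w}=e_{w-i}$ with $e_{-a}=e_a$, and, at the end, reading off $j_{t^sa}=t^sj_a-(t^s-1)j_0$ and the non-constancy of the $j_a$ from condition 4 and condition (ii). The genuine gap is exactly where you place ``the main obstacle'': you assert, but do not prove, that $F$ lies in $\AGL(1,p)$ with all $\sigma_j$ sharing a common slope. Saying that the closure relation ``forces $F$ to normalise the translation subgroup'' restates the conclusion rather than arguing for it, and the remark that $\gcd(p-1,p)=1$ ``prevents any nontrivial multiplicative cocycle from surviving on the Sylow $p$-subgroup of $F$'' already presupposes that a Sylow $p$-subgroup of $F$ is normal of order $p$ --- which is precisely what must be shown; a transitive subgroup of $\Sym_{\Z/(p)}$ such as the alternating group of degree $p$, or $\mathrm{PSL}_2(11)$ acting on $11$ points, has non-normal Sylow $p$-subgroups.

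The paper closes this gap with an idea absent from your sketch. Condition 2, rewritten as $\sigma_j\sigma_m=\sigma_{d_{i,k}(m)}\sigma_{d_{i,k}^{-1}(j)}$, says that $j\mapsto\sigma_j$ extends to a group homomorphism onto $F$ from the structure group $G(\Z/(p),s)$ of the permutation solution attached to $d_{i,k}$; since that structure group embeds in $\Z^{p}\rtimes\langle d_{i,k}\rangle$ by \cite[Propositions~2.3 and~2.4]{ESS}, the group $F$ is abelian-by-cyclic, hence solvable, and its maximal abelian normal subgroup $A$ is transitive, so $A\cong\Z/(p)$ and $F=A\rtimes N$ with $p\nmid|N|$. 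Transitivity of prime degree then makes $F$ primitive, and \cite[Theorem~2.1.6]{S} conjugates it into $\AGL(1,p)$; a symmetric argument (using $d_{i,j}=d_{a^t(i),a^t(j)}$ to produce a second permutation solution mapping onto $W$) handles $W$, and a further argument --- that $A$ acts transitively by conjugation on $\{\sigma_j\mid j\in\Z/(p)\}$, so all $\sigma_j$ are $A$-conjugate and hence have the same multiplier $t$ --- is also needed and is missing from your proposal. Unless you supply a substitute for this chain (and I do not see an elementary one coming only from your closure relation together with condition (ii)), the proof is incomplete at its central point.
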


\begin{proof} The assumptions allow to use  Proposition~\ref{sec6newind}, in particular
\begin{eqnarray}\label{cond1new}
\sigma_{j}\circ \sigma_{d^{-1}_{i,k}(l)}= \sigma_{l}\circ
\sigma_{d^{-1}_{k,i}(j)},
\end{eqnarray}
\begin{eqnarray}\label{cond3new}
    d_{i,k}=d_{k,i},
\end{eqnarray}
\begin{eqnarray}   \label{cyclenew}
    d_{i,w} \circ d_{\sigma_{j}^{-1}(k),\sigma_{j}^{-1}(w)}=
    d_{k,w} \circ d_{\sigma_{l}^{-1}(i),\sigma_{l}^{-1}(w)}.
\end{eqnarray}
    If $i=k$ then it follows that $ d_{\sigma_{j}^{-1}(i),\sigma_{j}^{-1}(w)}=
    d_{\sigma_{l}^{-1}(i),\sigma_{l}^{-1}(w)}$ for all $i,j,l,w$.
    Thus
    \begin{eqnarray}  \label{Fprimenew}
    d_{u,v} &=& d_{\sigma_{l}^{-1}\sigma_{j}(u),\sigma_{l}^{-1}\sigma_j (v)},
    \end{eqnarray}
    for all $j,l,u,v$.

By Proposition~\ref{sec6newind} we also know that $F=\langle \sigma_{j}\mid j\in \Z/(p) \rangle \subseteq \Sym_{\Z/(p)}$
and $W=\langle d_{i,k}\mid i,k\in\Z/(p)\rangle\subseteq \Sym_{\Z/(p)}$ are transitive. Let $F'=\langle \sigma_j
        \sigma_{l}^{-1} \mid l,j\in \Z/(p)\rangle \subseteq F$. By
        (\ref{cond1new})
        we know that also $\sigma_{j}^{-1}\sigma_{l}\in F'$, thus
 $F'$ is a normal subgroup of $F$. If $F'$ does not act transitively on $\Z/(p)$, then by \cite[Proposition~4.4]{passman},
        we know that the $F'$-orbits on $\Z/(p)$ are of the same cardinality. So they are singletons, which means that $F'$ acts trivially on $\Z/(p)$.
        So $F'= \{ \id\}$, in particular $\sigma_j=\sigma_k$ for all $j,k\in Z/(p)$, in contradiction with condition $(i)$ in Proposition \ref{sec6newind}.  Thus $F'$ acts transitively on $\Z/(p)$.
        Let $S_{i}$ be the stabilizer of $i\in \Z/(p)$ in $F$. Then $[F:S_{i}]=p$ and $[F':(S_{i}\cap F')]=p$. So $F'$ contains all cycles of length $p$ in $F$.
        Notice that $\sigma_{l}\in \sigma_{j}F'$, so that
        $F=\langle \sigma_{j}\rangle F'$ for every $j$.

Let $W'=\langle d_{i,j}d^{-1}_{k,l}
 \mid i,j,k,l\in \Z/(p)\rangle \subseteq W$.  By
(\ref{cyclenew})  we know that also $d_{i,j}^{-1}d_{k,l}\in W'$,
thus $W'$ is a normal subgroup of $W$. As above one can see that $W'$ acts transitively on $\Z/(p)$, using condition $(ii)$ of Proposition \ref{sec6newind}.
Notice that $d_{i,j}\in d_{k,l}W'$, so that $W=\langle d_{i,j}\rangle W'$ for every $i,j\in\Z/(p)$.
\bigskip

First we shall prove that $F$ and $W$ are abelian-by-cyclic
groups.
\bigskip

Note that condition (\ref{cond1new}) can be written in the form
    $$ \sigma_{j}\sigma_{m} = \sigma_{d_{i,k}(m)}\sigma_{d_{i,k}^{-1}(j)} \quad \mbox{ for } i,k,j,m\in \Z/(p).$$
    So for every $i,k\in\Z/(p)$, we get the permutation solution $(\Z/(p),s)$ corresponding to the permutation $d_{i,k}$, where
$s: (j,m) \mapsto (d_{i,k}(m),d_{i,k}^{-1}(j))$. Then we have a natural homomorphism of groups $G(\Z/(p),s)\rightarrow F$ such that $z\mapsto \sigma_{z}$, for all $z\in\Z/(p)$.
    But the structure group $G(\Z/(p),s)$ embeds into $\Z^{p}\rtimes \langle d_{i,k} \rangle$ (see \cite[Propositions~2.3 and~2.4]{ESS}).
    So it is abelian-by-cyclic. Hence, so is $F=\langle \sigma_{j}\mid  j\in Z \rangle\subseteq \Sym_{\Z/(p)}$.
Recall that we know that $F$ is transitive. Now, $F$ has a maximal
abelian normal subgroup $A$ such that $F/A$ is cyclic and $|F/A|$
divides the order of $d_{i,k}$, for all $i,k\in\Z/(p)$.

Suppose that $A$ is not transitive on $\Z/(p)$.
 From \cite[Proposition 4.4]{passman},
    we know that the $A$-orbits on $\Z/(p)$ are of the same cardinality. So $A= \{ \id\}$.
    Then $F$ is cyclic, so $F=\{ \id\}$ by the maximality of $A$, in contradiction with the transitivity of $F$.
Therefore $A$ is transitive on $\Z/(p)$.

    By \cite[Proposition~3.2]{passman},  we must have $A\cong \Z/(p)$.
    Since $p^2 \nmid |F|$, it follows that $p \nmid |F/A|$.
    Hence $F=A\rtimes N$ for a cyclic subgroup $N$ such that the order of $N$
    divides the order of $d_{i,k}$, for all $i,k\in \Z/(p)$.
    And $|N|$ is not divisible by $p$.

    Recall that $p\nmid |F/F'|$. Hence  $A\subseteq F'$. Let $A=\langle a\rangle$. By (\ref{Fprimenew}), we have that $d_{i,j}=d_{a^t(i),a^t(j)}$, and by (\ref{cyclenew}),
   $$d_{i,w}\circ d_{a^t\sigma_j^{-1}(k),a^t\sigma_j^{-1}(w)}=d_{k,w}\circ d_{a^t\sigma_j^{-1}(i),a^t\sigma_j^{-1}(w)},$$
for all $t\in \Z/(p)$. Since $A$ is transitive, for every $w\in
\Z/(p)$ there exists $t_j\in \Z/(p)$ such that
$a^{t_j}\sigma_j^{-1}(w)=w$. Hence
        \begin{equation}\label{newcond0}
            d_{i,w}\circ d_{a^{t_j}\sigma_j^{-1}(k),w}=d_{k,w}\circ d_{a^{t_j}\sigma_j^{-1}(i),w},
        \end{equation}
for all $i,j,k,w\in \Z/(p)$. Taking $u=a^{t_j}\sigma_j^{-1}(k)$, we have that
        $$d_{i,w}\circ d_{u,w}=d_{\sigma_ja^{-t_j}(u),w}\circ d_{a^{t_j}\sigma_j^{-1}(i),w}.$$
Hence for every the permutation $\tau_j=\sigma_ja^{-t_j}$ and $w$,
we get a permutation solution $(\Z/(p),s')$ corresponding to the
permutation $\tau_j$, where $s': (i,u) \mapsto
(\tau_j(u),\tau_j^{-1}(i))$. Since $d_{i,j}=d_{a^t(i),a^t(j)}$, it
follows that $W=\langle d_{i,w}\mid i\in\Z/(p)\rangle$. Then we
have the natural homomorphism of groups $G(\Z/(p),s')\rightarrow
W$ such that $i\mapsto d_{i,w}$, and it is surjective. But the
structure group $G(\Z/(p),s')$ embeds into $\Z^{p}\rtimes \langle
\tau_j \rangle$. So it is abelian-by-cyclic. Hence, so is $W$.
Recall that we know that $W$ is transitive. Now, $W$ has a maximal
abelian normal subgroup $A'$ such that $W/A'$ is cyclic and
$|W/A'| $ divides the order of $\tau_j$, for all $j\in \Z/(p)$.

Since $A$ is a normal subgroup of $F$, $\tau_j^m\in \sigma_j^mA$,
for all positive integers $m$. Hence $\sigma_j^m\notin A$ if and
only if $\tau_j^m\notin A$.

Note that if $\sigma_j\notin A$, then the order of $\sigma_j$ is
not divisible by $p$, because in $\Sym_{\Z/(p)}$ an element has
order divisible by $p$ if and only if it has order $p$, and $A$ is
the Sylow $p$-subgroup of $F$. Thus in this case,
$A\cap\langle\sigma_j\rangle=\{\id\}$ and
$A\cap\langle\tau_j\rangle=\{\id\}$. Since  $\tau_j^m\in
\sigma_j^mA$, for all positive integers $m$, we have that the
order of $\tau_j$ is equal to the order of $\sigma_j$, in this
case.

 On the other hand, if $\sigma_j\in A$, then since
$a^{t_j}\sigma_j^{-1}(w)=w$ and $A$ is a transitive subgroup of
order $p$ of $\Sym_{\Z/(p)}$, we have that $a^{t_j}=\sigma_j$.
Thus, by (\ref{newcond0}), we have that
$$ d_{i,w}\circ d_{k,w}=d_{k,w}\circ d_{i,w},$$
for all $i,k,w\in \Z/(p)$. Since $W=\langle d_{i,w}\mid i\in\Z/(p)\rangle$,
we have that $W$ is abelian and thus $W=A'$ in this case.

By the maximality of $A'$ and the transitivity of $W$, one can see
that $A'$ is transitive and then $A'\cong \Z/(p)$. Hence $A'$ is
the Sylow $p$-subgroup of $W$, and  it follows that $p \nmid
|W/A'|$.
    Hence $W=A'\rtimes N'$ for a cyclic subgroup $N'$ such that the order of $N'$
    divides the order of $\tau_j$, for all $j\in \Z/(p)$.
    And $|N'|$ is not divisible by $p$.
\bigskip

Now we shall see that $F=A$ if and only if $W=A'$.
\bigskip

Suppose  first that $F=A\cong  \Z/(p)$.  Thus $\sigma_j\in A$
and we have seen above that, in this case, $W=A'\cong \Z/(p)$.

Now, suppose that $W=A'\cong\Z/(p)$. Since the order of $N$
divides the order of $d_{i,k}$ and $|N|$ is not divisible by $p$,
it follows that $|N|=1$ and therefore $F=A\cong \Z/(p)$.
\bigskip

We shall study two cases.
\bigskip

{\em Case 1.} Suppose that $F=A\cong \Z/(p)\cong A'=W$.
\bigskip

In this case,  $F=\langle \sigma_t\rangle$ for some $t\in \Z/(p)$.
Hence there exists a permutation $\eta\in\Sym_{\Z/(p)}$ such that
\begin{equation} \label{sigma} \sigma_t(\eta(l))=\eta(l+1)\ \mbox{ for all }
l\in\Z/(p).
\end{equation}
 Also
there exists a permutation $\nu\in \Sym_{\Z/(p)}$ such that
$\sigma_{\nu(l)}=\sigma_t^l=\sigma_{\nu(1)}^l$, for all
$l\in\Z/(p)$. Then condition (\ref{cond1new}) implies that
\begin{eqnarray*}\sigma_{\nu(1)}^{j-l}&=&\sigma_{\nu(l)}^{-1}\circ\sigma_{\nu(j)}\\
&=& \sigma_{\nu(\nu^{-1}(d^{-1}_{k,i}(\nu(j))))}\circ
\sigma_{\nu(\nu^{-1}(d^{-1}_{i,k}(\nu(l))))}^{-1}\\
&=&\sigma_{\nu(1)}^{\nu^{-1}(d^{-1}_{k,i}(\nu(j)))-\nu^{-1}(d^{-1}_{k,i}(\nu(l)))}.\end{eqnarray*}
Hence
\begin{equation}\label{newcond1}\nu^{-1}d_{i,k}^{-1}\nu(j)-\nu^{-1}d_{i,k}^{-1}\nu(l)=j-l,
\end{equation}
for all $i,k,j,l\in\Z/(p)$. In particular,
\begin{equation}\label{condadd}\nu^{-1}d_{i,k}^{-1}\nu(j)=j+\nu^{-1}d_{i,k}^{-1}\nu(0)\
\mbox{ for all } i,k,j\in\Z/(p). \end{equation} Note that
$\sigma_{\nu(0)}=\id$. Hence condition (\ref{Fprimenew}) implies,
in view of (\ref{sigma}),  that
\begin{equation}\label{condadd2}
d_{\eta(l),\eta(t)}=d_{\sigma_{\nu(j)}(\eta(l)),\sigma_{\nu(j)}(\eta(t))}=
d_{\eta(l+j),\eta(t+j)}, \end{equation}  for all $l,t,j\in
\Z/(p)$. In particular, for $j=-t$ we have that
\begin{equation}\label{newcond2}d_{\eta(l),\eta(t)}=d_{\eta(l-t),\eta(0)},
\end{equation}
for all $l,t\in\Z/(p)$. We define
$$j_i=-\nu^{-1}d_{\eta(i),\eta(0)}^{-1}\nu(0)$$
for all $i\in\Z/(p)$. Note that by (\ref{newcond2}),
$$j_i=-\nu^{-1}d_{\eta(i),\eta(0)}^{-1}\nu(0)=-\nu^{-1}d_{\eta(0),\eta(i)}^{-1}\nu(0)=-\nu^{-1}d_{\eta(-i),\eta(0)}^{-1}\nu(0)=j_{-i},$$
for all $i\in\Z/(p)$. Since $W\cong\Z/(p)$,
$d_{\eta(i),\eta(0)}=d_{\eta(k),\eta(0)}$ if and only if
$$d_{\eta(i),\eta(0)}^{-1}\nu(0)=d_{\eta(k),\eta(0)}^{-1}\nu(0),$$
and this is equivalent to $j_i=j_k$. By condition (ii) in
Proposition~\ref{sec6newind}, if $i\neq i'$, there exists
$k\in\Z/(p)$ such that $d_{\eta (i),\eta (k)}\neq d_{\eta (
i'),\eta (k)}$. Then, by condition (\ref{newcond2}), $d_{\eta
(i-k),\eta (0)}\neq d_{\eta (i'-k),\eta (0)}$, and therefore
$j_{i-k}\neq j_{i'-k}$. Hence the elements $j_i$ satisfy the
hypothesis of Theorem~\ref{newexample} for $t=1$. Consider the
corresponding simple solution $(X,r')$, thus
$$r'((i,j),(k,l))=(\sigma'_{(i,j)}(k,l),\left(\sigma'_{\sigma'_{(i,j)}(k,l)}\right)^{-1}(i,j)),$$
where
$$\sigma'_{(i,j)}(k,l)=(k+j,l-j_{k+j-i}),$$
for all $i,j,k,l\in\Z/(p)$. We shall prove that $(X,r)$ and $(X,r')$
are isomorphic.

Let $f:X\rightarrow X$ be the map defined by
$f(i,j)=(\eta(i),\nu(j))$ for all $i,j\in \Z/(p)$.  Note that
\begin{eqnarray*}f(\sigma'_{i,j}(k,l))&=&f(k+j,l-j_{k+j-i})\\
&=&(\eta(k+j),\nu(l-j_{k+j-i}))\\
&=&(\eta(k+j),\nu(l+\nu^{-1}d_{\eta(k+j-i),\eta(0)}\nu(0)))\\
&=&(\eta(k+j),\nu(\nu^{-1}d_{\eta(k+j-i),\eta(0)}\nu(l))) \ \mbox{ by (\ref{condadd})} \\
&=&(\eta(k+j),d_{\eta(i),\eta(k+j)}\nu(l)) \ \mbox{ by  (\ref{cond3new}) and (\ref{condadd2})}\\
&=&(\sigma_{\nu(j)}(\eta(k)),d_{\eta(i),\sigma_{\nu(j)}\eta(k)}(\nu(l))) \ \mbox{ by (\ref{sigma})}\\
&=&\sigma_{(\eta(i),\nu(j))}(\eta(k),\nu(l))\\
&=&\sigma_{f(i,j)}f(k,l).
\end{eqnarray*}
Hence $f$ is an isomorphism from $(X,r')$ to $(X,r)$.
\bigskip

{\em Case 2.} Suppose that $F\neq A$ and $W\neq A'$.
\bigskip

Recall that in this case $F=A\rtimes N$, where $N$ is a cyclic
group such that $|N|>1$ is a divisor of the order of $d_{i,k}$,
for all $i,k$, and $p$ is not a divisor of $|N|$. Also
$W=A'\rtimes N'$, where $N'$ is a cyclic group such that $|N'|>1$
is a divisor of the order of $\tau_j=\sigma_ja^{-t_j}$, for every
$j\in\Z/(p)$, where $A=\langle a\rangle$. In particular,
$\tau_j\notin A$ and we have seen that in this case the
order of $\sigma_j$ is equal to the order of $\tau_j$, for all
$j\in\Z/(p)$, and $p$ is not a divisor of $|N'|$. Hence
\bigskip

  - every $d_{i,k}\neq \id$ and has order not divisible by $p$,
\bigskip

  - also every $\sigma_{j}\neq \id$ and has order prime to $p$.  \bigskip

In this case, applying condition (\ref{cond1new})
twice,
    \begin{eqnarray}\label{solutionsigma}
    \sigma_j\circ\sigma_l=\sigma_{d_{i,k}(l)}\circ\sigma_{d^{-1}_{i,k}(j)} =\sigma_{d_{u,v}d^{-1}_{i,k}(j)}\circ\sigma_{d^{-1}_{u,v}d_{i,k}(l)},\end{eqnarray}
for all $j,l\in Z$ and $i,k,u,v\in \Z/(p)$. Since  $W'$ acts
transitively on $\Z/(p)$, for every $j,l\in \Z/(p)$ there exists
$j'\in \Z/(p)$ such that
    $$\sigma_j\circ\sigma_l=\sigma_{l}\circ\sigma_{j'}.$$
In particular $A$ acts by conjugation on the set $\{ \sigma_j\mid
j\in \Z/(p)\}$. Recall that $A=\langle a\rangle\cong\Z/(p)$. Let
$j_0\in \Z/(p)$. We shall see that
$$\{ a^t\sigma_{j_0}a^{-t}\mid t\in\Z/(p)\}=\{\sigma_j\mid j\in \Z/(p)\}.$$
Note that if $\sigma_{j_0}=a^t\sigma_{j_0}a^{-t}$ for some $t\neq
0$, then $\langle a,\sigma_{j_0}\rangle$ is abelian and transitive
on $\Z/(p)$, and therefore $\sigma_{j_0}\in A$, a contradiction
because $\sigma_{j_0}\neq\id$ and has order prime to $p$.

Thus indeed $\{ a^t\sigma_{j_0}a^{-t}\mid
t\in\Z/(p)\}=\{\sigma_j\mid j\in \Z/(p)\}$. Therefore
$\sigma^{-1}_j\sigma_l=a^{t_1}\sigma^{-1}_{j_0}a^{-t_1}a^{t_2}\sigma_{j_0}a^{-t_2}$ for some
$t_1,t_2\in\Z/(p)$. Since $A$ is normal in $F$, we have that
$F'=\langle\sigma^{-1}_{j}\sigma_l\mid j,l\in Z\rangle=A$.

Since $F=F'\langle\sigma_j\rangle=A\rtimes N$, for all $j\in
\Z/(p)$, and $F'=A\cong\Z/(p)$, we have that the order of
$\sigma_j$ is equal to the order of $N$, for all $j\in\Z/(p)$,
because in this case the order of $\sigma_j$ is
relatively prime to $p$.

Recall that $W=A'\rtimes N'$, with $A'\cong\Z/(p)$ and $|N'|>1$ is
a divisor of the order of $\tau_j$ which is equal to the order of
$\sigma_j$, and $p\nmid |N'|$. Since the order of $\sigma_j$ is
equal to de order of $N$, we have that $|N'|$ divides $|N|$.
Recall that $|N|$ divides the order of any $d_{i,k}$. Since the
order of $d_{i,k}$ is not divisible by $p$, clearly the order of
$d_{i,k}$ divides $|N'|$.  Therefore $|N'|=|N|$. Let $b\in A'$ be
an element of order $p$. Thus $A'=\langle b\rangle$.

Since $A\cong A'\cong\Z/(p)$, and $N$ and $N'$ are isomorphic to
subgroups of $\Aut(\Z/(p))$ of the same order and $\Aut(\Z/(p))$
is cyclic, we have that $F\cong W\cong A\rtimes N$.

Consider the map $\varphi \colon \Z/(p)\rtimes
\Aut(\Z/(p))\longrightarrow\Sym_{\Z/(p)}$ defined by
    $$\varphi(j,g)(l)=j+g(l),$$
for all $l\in\Z/(p)$. It is clear the $\varphi$ is a monomorphism
of groups. Since $F$ is a transitive subgroup of $\Sym_{\Z/(p)}$,
it is primitive. By \cite[Theorem 2.1.6]{S}, there exist  a
subgroup $H$ of $\Aut(\Z/(p))$ and a permutation
$\eta\in\Sym_{\Z/(p)}$ such that $\varphi(\Z/(p)\rtimes
H)=\eta^{-1}F\eta$.  Since $\Aut(\Z/(p))$ is cyclic, there exists
$h\in H$ such that $H=\langle h\rangle$. Note that
$\varphi(1,\id)$ has order $p$, thus $\eta\varphi(1,\id)\eta^{-1}$
is an element of order $p$ in $F$. Hence we may assume that
$a=\eta\varphi(1,\id)\eta^{-1}$. For every $j\in\Z/(p)$, since
$\sigma_j\notin A$, there exist integers $m_j,k_j$ such that
$\eta^{-1}\sigma_j\eta=\varphi(m_j,h^{k_j})$ and $h^{k_j}\neq\id$.
Since, for every $l\in\Z/(p)$,  $\{ a^k\sigma_{l}a^{-k}\mid
k\in\Z/(p)\}=\{\sigma_j\mid j\in \Z/(p)\}$, we have that for every
$j\in\Z/(p)$ there exists $m\in\Z/(p)$ such that
$\sigma_j=a^m\sigma_0a^{-m}$. Hence
\begin{eqnarray*}\varphi(m_j,h^{k_j})&=&\eta^{-1}\sigma_j\eta =\eta^{-1}a^m\sigma_0a^{-m}\eta\\
&=&\varphi((1,\id)^m(m_0,h^{k_0})(1,\id)^{-m})=\varphi((m+m_0,h^{k_0})(-m,\id))\\
&=&\varphi(m+m_0-h^{k_0}(m),h^{k_0}).
\end{eqnarray*}
It follows that $h^{k_j}=h^{k_0}$, for all $j\in\Z/(p)$. Hence
$$\{ \eta^{-1}\sigma_j\eta\mid j\in\Z/(p)\}=\{ \varphi(l,h^{k_0})\mid l\in\Z/(p)\}.$$
Since $F=\langle\sigma_j\mid j\in\Z/(p)\rangle$ we have
\begin{eqnarray*}\varphi(\Z/(p)\rtimes\langle h\rangle)&=&\eta^{-1}F\eta\\
    &=&\langle\eta^{-1}\sigma_j\eta\mid j\in\Z/(p)\rangle\\
    &=&\varphi(\langle(l,h^{k_0})\mid l\in\Z/(p)\rangle)\\
    &=&\varphi(\Z/(p)\rtimes\langle h^{k_0}\rangle),
\end{eqnarray*}
and therefore $\langle h\rangle=\langle h^{k_0}\rangle$. Thus we
may assume that $h=h^{k_0}$. So there exists $j\in\Z/(p)$ such
that $\eta^{-1}\sigma_j\eta=\varphi(0,h)$. Since $h\in
\Aut(\Z/(p))\setminus \{\id\}$, there exists
$t\in\Z/(p)\setminus\{0,1\}$ such that $h(k)=tk$ for all
$k\in\Z/(p)$.

Now there exists a permutation $\nu$ such that
$\sigma_{\nu(k)}=a^{(1-t)^{-1}k}\sigma_{j}a^{-(1-t)^{-1}k}$.
Since $\eta^{-1}a\eta=\varphi(1,\id)$, it follows that
$\eta^{-1}a^k\eta=\varphi(k,\id)$. Therefore
\begin{eqnarray*}\eta^{-1}\sigma_{\nu(k)}\eta&=&\eta^{-1}a^{(1-t)^{-1}k}\sigma_{j}a^{-(1-t)^{-1}k}\eta\\
    &=&\varphi(((1-t)^{-1}k,\id)(0,h)(-(1-t)^{-1}k,\id))\\
&=&\varphi((1-t)^{-1}k-h((1-t)^{-1}k),h)\\
&=&\varphi((1-t)^{-1}k-t(1-t)^{-1}k,h)\\
&=&\varphi(k,h).
\end{eqnarray*}
Hence
$\eta^{-1}\sigma_{\nu(k)}\eta(l)=\varphi(k,h)(l)=k+h(l)=k+tl$, and
thus
\begin{equation}\label{condnu}
\sigma_{\nu(k)}(\eta(l))=\eta(tl+k).
\end{equation}

By (\ref{cond1new}),
$$\sigma_u\sigma_v(l)=\sigma_{d_{k,0}(v)}\sigma_{d^{-1}_{k,0}(u)}(l),$$
for all $u,v,k,l\in\Z/(p)$. In particular,
$$\sigma_{\nu(u)}\sigma_{\nu(v)}(\eta(l))=\sigma_{d_{k,0}(\nu(v))}\sigma_{d^{-1}_{k,0}(\nu(u))}(\eta(l)).$$
Hence,  applying this and condition (\ref{condnu}) several
times, we get
\begin{eqnarray*}\eta(t(tl+v)+u)&=&\sigma_{\nu(u)}(\eta(tl+v))\\
    &=&\sigma_{\nu(u)}(\sigma_{\nu(v)}(\eta(l)))\\
    &=&\sigma_{d_{k,0}(\nu(v))}\sigma_{d^{-1}_{k,0}(\nu(u))}(\eta(l))\\
    &=&\sigma_{d_{k,0}(\nu(v))}(\eta(tl+\nu^{-1}(d^{-1}_{k,0}(\nu(u)))))\\
    &=&\eta(\nu^{-1}(d_{k,0}(\nu(v)))+t(tl+\nu^{-1}(d^{-1}_{k,0}(\nu(u))))),
\end{eqnarray*}
and thus
$$tv+u=\nu^{-1}(d_{k,0}(\nu(v)))+t\nu^{-1}(d^{-1}_{k,0}(\nu(u))),$$
for all $u,v,k\in \Z/(p)$. In particular, for $u=0$ we get
$$\nu^{-1}(d_{k,0}(\nu(v)))=t(v-\nu^{-1}(d^{-1}_{k,0}(\nu(0)))).$$
Let $j_k=\nu^{-1}(d^{-1}_{\eta(k),\eta(0)}(\nu(0)))$. Hence
\begin{equation} \label{vee}
d_{\eta(k),\eta(0)}(\nu(v))=\nu(t(v-j_k)).
\end{equation}
 As $\eta^{-1}a^k\eta(u)=\varphi(k,\id)(u)=u+k$,
 by (\ref{Fprimenew}) we have
\begin{equation} \label{de}
d_{\eta(u),\eta(v)}=d_{a^k(\eta(u)),a^k(\eta(v))}=d_{\eta(u+k),\eta(v+k)},
\end{equation}
for all $k,u,v\in \Z/(p)$. Hence
$$j_k=\nu^{-1}(d^{-1}_{\eta(k),\eta(0)}(\nu(0)))=\nu^{-1}(d^{-1}_{\eta(0),\eta(-k)}(\nu(0)))=j_{-k},$$
where the last equality follows by (\ref{cond3new}).

By (\ref{cyclenew}),
$$d_{\eta(i),\eta(0)}d_{\sigma^{-1}_{\nu(j)}(\eta(k)),\sigma^{-1}_{\nu(j)}(\eta(0))}
(\nu(l))=d_{\eta(k),\eta(0)}d_{\sigma^{-1}_{\nu(j')}(\eta(i)),\sigma^{-1}_{\nu(j')}(\eta(0))}(\nu(l)),$$
for all $i,j,j',k,l\in \Z/(p)$.  Now, using (\ref{condnu}),
we get
\begin{eqnarray*}
    d_{\eta(i),\eta(0)}d_{\sigma^{-1}_{\nu(j)}(\eta(k)),\sigma^{-1}_{\nu(j)}(\eta(0))}(\nu(l))
    &=&d_{\eta(i),\eta(0)}d_{\eta(t^{-1}(k-j)),\eta(-t^{-1}j)}(\nu(l)) \ \mbox{ by (\ref{de})}\\
    &=&d_{\eta(i),\eta(0)}d_{\eta(t^{-1}k),\eta(0)}(\nu(l)) \ \mbox{ by (\ref{vee})} \\
    &=&d_{\eta(i),\eta(0)}(\nu(t(l-j_{t^{-1}k}))) \ \mbox{ by (\ref{vee})}\\
    &=&\nu(t(t(l-j_{t^{-1}k})-j_i))\quad \mbox{and similarly }\\
  d_{\eta(k),\eta(0)}d_{\sigma^{-1}_{\nu(j')}(\eta(i)),\sigma^{-1}_{\nu(j')}(\eta(0))}(\nu(l))&=&\nu(t(t(l-j_{t^{-1}i})-j_k)).
\end{eqnarray*}
Hence $tj_{t^{-1}i}+j_k=tj_{t^{-1}k}+j_i$. In particular, for
$i=0$ and $t^{-1}k=u$, we get that
$$tj_0+j_{tu}=tj_{u}+j_0.$$
Thus $j_{tu}=tj_u-(t-1)j_0$. By induction on $s$ one can see that
$$j_{t^su}=t^sj_u-(t^s-1)j_0.$$
Finally, suppose that $i$ is a nonzero element of $\Z/(p)$.
Suppose that $j_{i+k}=j_k$ for all $k\in\Z/(p)$.  Then, using
(\ref{de}) and the definition of $j_{k}$ we get
$$d_{\eta(i),\eta(-k)}=d_{\eta(i+k),\eta(0)}=d_{\eta(k),\eta(0)}=d_{\eta(0),\eta(-k)},$$
for all $k\in \Z/(p)$.
But this is not possible because $(X,r)$ is irretractable and should
satisfy condition (ii) of Proposition \ref{sec6newind}. Therefore,
for every nonzero element $i\in\Z/(p)$, there exists $k\in\Z/(p)$
such that $j_{i+k}\neq j_k$. Hence $t$ and the $j_i$ satisfy the
hypothesis of Theorem~\ref{newexample}. Consider the corresponding
simple solution $(X,r')$, with
$r((i,j),(k,l))=(\sigma'_{(i,j)}(k,l),(\sigma'_{\sigma'_{(i,j)}(k,l)})^{-1}(i,j))$,
where
$$\sigma'_{(i,j)}(k,l)=(tk+j,t(l-j_{tk+j-i})),$$
for all $i,j,k,l\in\Z/(p)$. We shall prove that the solutions
$(X,r)$ and $(X,r')$ are isomorphic. Let $f\colon X\longrightarrow
X$ be the map defined by $f(i,j)=(\eta(i),\nu(j))$ for all $i,j\in
\Z/(p)$. Note that
\begin{eqnarray*}f(\sigma'_{(i,j)}(k,l))&=&f(tk+j,t(l-j_{k+j-i}))\\
&=&(\eta(tk+j),\nu(t(l-j_{tk+j-i})))\quad\mbox{by (\ref{vee}) }\\
&=&(\eta(tk+j),\nu(\nu^{-1}d_{\eta(tk+j-i),\eta(0)}\nu(l)))\quad\mbox{by (\ref{de})  and (\ref{cond3new})}\\
&=&(\eta(tk+j),d_{\eta(i),\eta(tk+j)}\nu(l))\quad\mbox{by (\ref{condnu})}\\
&=&(\sigma_{\nu(j)}(\eta(k)),d_{\eta(i),\sigma_{\nu(j)}\eta(k)}(\nu(l)))\\
&=&\sigma_{(\eta(i),\nu(j))}(\eta(k),\nu(l))\\
&=&\sigma_{f(i,j)}f(k,l).
\end{eqnarray*}
Hence $f$ is an isomorphism from $(X,r')$ to $(X,r)$. Therefore
the result follows.
\end{proof}

\begin{remark}
Clearly, if $t=1$ in
Theorem~\ref{newexample}, then one gets many examples as the
hypothesis on the relations between $j_i$ are satisfied trivially.
 The following example shows that there are simple solutions that
satisfy the hypothesis of Theorem \ref{newexample} with $t\neq 1$.
Indeed, for  $p=7$, we have that $\{2^j\mid j\in \Z\}=\{ 2,4,1\}$.
If we take $\sigma_j(l)=2l+j$ and $d_{0,0}(l)=2l$,
$d_{1,0}(l)=2(l-1)$, $d_{2,0}(l)=2(l-2)$, $d_{4,0}(l)=2(l-4)$ and
\begin{eqnarray*}d_{k,k}=d_{0,0},&&
d_{k+1,k}=d_{k,k+1}=d_{1,0},\\
d_{k+2,k}=d_{k,k+2}=d_{2,0} &\mbox{ and
}&d_{k+4,k}=d_{k,k+4}=d_{4,0},
\end{eqnarray*}
for all $k\in\Z/(7)$, one can check that $(\Z/(7)\times
\Z/(7),r)$, where
$r((i,j),(k,l))=(\sigma_{(i,j)}(k,l),\sigma^{-1}_{\sigma_{(i,j)}(k,l)}(i,j))$
and $\sigma_{(i,j)}(k,l)=(\sigma_j(k),d_{i,\sigma_j(k)}(l))$,
satisfies the hypothesis of Theorem \ref{newexample}, and thus it
is a simple solution of the YBE.
\end{remark}

\section{An alternative construction of simple solutions of order $n^2$.}\label{section6}
In this section we shall see an alternative construction of some of
the simple solutions constructed in Theorems~\ref{newexample}
and~\ref{newexample2bis}.

 Consider the standard basis $\mathcal{B}=(e_1,\dots ,e_n)$,
    $$e_1 =(1,0,\ldots , 0),\;\; e_2 = (0,1,0,\ldots , 0), \; \ldots \; , e_{n}=(0,\ldots , 0, 1)$$
    of the  free $\Z/(n)$-module  $\left( \Z/(n) \right)^n$.
    The indices are viewed as elements of
    $\Z/(n)$.
    Consider the wreath product
    $$\Z/(n) \wr S = \left( \Z/(n) \right)^{n} \rtimes_{\alpha} \Z/(n),$$
    where
    $$\alpha(i)(e_j)=e_{i+j},$$
    for all $i,j\in \Z/(n)$.
    For $j_0,j_1,\dots ,j_{n-1}\in \Z/(n)$, such that $j_i=j_{-i}$
 for all $i\in\Z/(n)$
    (interpreting the indices as elements of $\Z/(n)$),
     consider the bilinear form
    $$b_{j_0,\dots, j_{n-1}}: \left( \Z/(n)\right)^{n} \times \left( \Z/(n) \right)^{n} \rightarrow \Z/(n),$$
    with matrix with respect to $\mathcal{B}$
    $$M(b_{j_0,\dots,j_{n-1}},\mathcal{B})=\left(
    \begin{array}{ccccc}
    j_0&j_1&\ldots&\ldots&j_{n-1}\\
    j_{n-1}&j_0&\ddots&&\vdots\\
    \vdots&\ddots&\ddots&\ddots&\vdots\\
    \vdots&&\ddots&\ddots&j_1\\
    j_1&j_2&\ldots&j_{n-1}&j_0
    \end{array}\right).$$
    Note that $\alpha(i)$ is in the orthogonal group of $((\Z/(n))^n,b_{j_0,\dots,j_{n-1}})$ for all $i\in \Z/(n)$.
    Consider the asymmetric product  (\cite{CCS}) of the trivial braces $\left(\Z/(n)\right)^{n}$ and $\Z/(n)$ defined via $\alpha$ and $b_{j_0,\dots ,j_{n-1}}$:
    $$B_{j_0,\dots, j_{n-1}} = \left( \Z/(n)\right)^{n} \rtimes_{\circ} \Z/(n).$$
    Hence, $(B_{j_0,\dots ,j_{n-1}},\circ) \cong \left( \Z/(n) \right)^{n} \rtimes_{\alpha} \Z/(n)$ and the addition is defined by
    $$(u,i)+(v,j) =(u+v, i+j+b_{j_0,\dots ,j_{n-1}}(u,v)),$$
    for all $u,v \in  \left( \Z/(n) \right)^{n}$ and $i,j \in \Z/(n)$. We know that $(B_{j_0,\dots ,j_{n-1}},+,\circ)$ is a left brace.
    We denote the lambda map of $B_{j_0,\dots ,j_{n-1}}$ by $\lambda^{(j_0,\dots,j_{n-1})}$. We have
    \begin{eqnarray}
    \lambda^{(j_0,\dots ,j_{n-1})}_{(u,i)}(v,j) &=& (u,i) (v,j) -(u,i) \nonumber \\
    &=& (u+\alpha(i)(v), i+j)-(u,i) \nonumber \\
    &=& (\alpha (i)(v),j-b_{j_0,\dots ,j_{n-1}}(u,\alpha (i)(v))) . \label{lambdaui}
    \end{eqnarray}
    Note that $\lambda^{(j_0,\dots ,j_{n-1})}_{(u,i)}$ is determined by the action on $X=\{ (e_i,j) \mid i,j \in \Z/(n)\}$.
    Put
    $$x_{ij} =(e_i,j).$$
    Note that
    \begin{eqnarray} \label{action1}
    \lambda^{(j_0,\dots, j_{n-1})}_{x_{ij}} (x_{kl}) &=& \lambda_{(e_i,j)} (e_{k},l) = (e_{k+j},l-a_{k+j-i})
    = x_{k+j, l-j_{k+j-i}}.
    \end{eqnarray}
    Let
    $$r_{j_0,\dots ,j_{n-1}}:X\times X \rightarrow X\times X$$
    be the map defined by
    $$r_{j_0,\dots,j_{n-1}}(x_{ij}, x_{kl}) = (\lambda^{(j_0,\dots,j_{n-1})}_{x_{ij}} (x_{kl}),
    (\lambda^{(j_0,\dots,j_{n-1})}_{\lambda^{(j_0,\dots,j_{n-1})}_{x_{ij}}(x_{kl})})^{-1} (x_{ij} )). $$
    Note that $r_{j_0,\dots,j_{n-1}}$ is the restriction  to $X^{2}$ of the solution associated
    to the left brace $B_{j_0,\dots ,j_{n-1}}$ (see \cite[Lemma 2]{CJOComm}).
    Thus $(X,r_{j_0,\dots,j_{n-1}})$ is a solution of the YBE.
\bigskip

     Consider the map $f\colon (\Z(n))^2\rightarrow X$ defined by $f(i,j)=x_{ij}$, for all $i,j\in\Z/(n)$. Then, by (\ref{action1}),  $f$ is an isomorphism of the solution $((\Z(n))^2,r)$ to $(X,r_{j_0,\dots,j_{n-1}})$, where $r$ is defined as in Theorem \ref{newexample2bis}, that is
     $r_{(i,j)}(k,l)=(\sigma_{(i,j)}(k,l),\sigma^{-1}_{\sigma_{(i,j)}(k,l)})$, where $\sigma_{(i,j)}(k,l)=(k+j,l-j_{k+j-i})$, for all $i,j,k,l\in\Z/(n)$. Thus the following result is an easy consequence.

    \begin{proposition}\label{ind}
        The solution $(X,r_{j_0,\dots,j_{n-1}})$ is indecomposable if and only if $\Z/(n) =\langle j_0,\dots ,j_{n-1}\rangle$.

        The solution  $(X,r_{j_0,\dots,j_{n-1}})$ is irretractable if and only if
        the $n$ rows of $M(b_{j_0,\dots,j_{n-1}},\mathcal{B})$ are distinct (i.e. if $j_{i+k}=j_{i'+k}$ for every $k$ then $i=i'$).
    \end{proposition}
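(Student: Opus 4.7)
The plan is to reduce both statements to Proposition~\ref{sec6newind} via the isomorphism $f\colon ((\Z/(n))^2,r)\to (X,r_{j_0,\dots,j_{n-1}})$ already established in the text, where $r$ is the map from Theorem~\ref{newexample2bis} satisfying $\sigma_{(i,j)}(k,l)=(k+j,l-j_{k+j-i})$. Since $f$ is an isomorphism of solutions of the YBE, indecomposability and irretractability transfer directly, so it suffices to check the two conditions on the side of $(\Z/(n))^2$. Reading $\sigma_{(i,j)}(k,l)$ in the form $(\sigma_j(k),d_{i,\sigma_j(k)}(l))$ of Proposition~\ref{sec6newind}, I identify $\sigma_j\colon k\mapsto k+j$ and $d_{i,k}\colon l\mapsto l-j_{k-i}$; the cocycle conditions 1--4 together with (i) of that proposition were already verified in the proof of Theorem~\ref{newexample2bis}, so I only need to determine when $F=\langle\sigma_j\rangle$, $W=\langle d_{i,k}\rangle$ are transitive and when (ii) of Proposition~\ref{sec6newind} holds.

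For the first claim, by Proposition~\ref{sec6newind} the solution is indecomposable iff $F$ and $W$ both act transitively on $\Z/(n)$. The group $F$ consists of the translations $k\mapsto k+j$ for $j\in\Z/(n)$, hence equals the full cyclic group of translations and is automatically transitive. The group $W$ consists of the translations $l\mapsto l+c$ for $c$ ranging over $\langle -j_{k-i}\mid i,k\in\Z/(n)\rangle=\langle j_0,\dots,j_{n-1}\rangle$. Thus $W$ is transitive on $\Z/(n)$ if and only if $\langle j_0,\dots,j_{n-1}\rangle=\Z/(n)$, which is the stated criterion.

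For the second claim, condition (i) of Proposition~\ref{sec6newind} is immediate ($\sigma_j\neq\sigma_l$ whenever $j\neq l$, since these are distinct translations), so irretractability is equivalent to condition (ii): $d_{i,k}=d_{i',k}$ for every $k\in\Z/(n)$ forces $i=i'$. Since $d_{i,k}$ is translation by $-j_{k-i}$, the hypothesis $d_{i,k}=d_{i',k}$ for all $k$ is equivalent to $j_{k-i}=j_{k-i'}$ for all $k$, which after substituting $m=k-i$ becomes $j_m=j_{m+(i-i')}$ for all $m\in\Z/(n)$. This is precisely the statement that the rows of the circulant matrix $M(b_{j_0,\dots,j_{n-1}},\mathcal{B})$ indexed by $i$ and $i'$ coincide. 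Hence condition (ii) holds iff all $n$ rows of $M$ are distinct, which is the stated criterion.

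The proof is essentially a bookkeeping exercise once the identification with Proposition~\ref{sec6newind} is made, so there is no real obstacle; the only point requiring mild care is matching the index shift between $d_{i,k}$ and the rows of the circulant matrix, and noticing that the trivial direction of condition~(i) of Proposition~\ref{sec6newind} makes irretractability depend only on the $d_{i,k}$.
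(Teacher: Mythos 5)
Your proof is correct and follows essentially the same route as the paper, which establishes the isomorphism $f(i,j)=x_{ij}$ with the solution of Theorem~\ref{newexample2bis} and then declares the proposition ``an easy consequence''; you have simply filled in the bookkeeping via Proposition~\ref{sec6newind}. The only pedantic caveat is that the separate equivalences you invoke (indecomposable iff $F$ and $W$ transitive; irretractable iff (i) and (ii)) appear in the \emph{proof} of Proposition~\ref{sec6newind} rather than in its statement, which only characterizes the conjunction, but that is exactly how the paper uses it too.
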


    \begin{proposition}\label{Bc}
        The left braces $B_{j_0,\dots,j_{n-1}}$ and $\mathcal{G}(X,r_{j_0,\dots,j_{n-1}})$ are isomorphic if and only if $b_{j_0,\dots,j_{n-1}}$ is non-singular.
    \end{proposition}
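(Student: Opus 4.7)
The plan is to identify $\mathcal{G}(X,r_{j_0,\dots,j_{n-1}})$ with the quotient $B_{j_0,\dots,j_{n-1}}/\soc(B_{j_0,\dots,j_{n-1}})$ as a left brace; writing $B=B_{j_0,\dots,j_{n-1}}$ and $b=b_{j_0,\dots,j_{n-1}}$ for brevity, the proposition will then reduce to showing that $\soc(B)=0$ if and only if $b$ is non-singular, together with the observation that the finite brace $B$ is isomorphic to $B/\soc(B)$ exactly when $\soc(B)$ is trivial.

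For the first step, I will invoke the functoriality recalled after Lemma~\ref{lambda}. Formula~(\ref{action1}) exhibits $(X,r_{j_0,\dots,j_{n-1}})$ as a sub-solution of $(B,r_B)$, so the inclusion $\iota\colon X\hookrightarrow B$ is a homomorphism of solutions and induces a brace homomorphism $\bar\iota\colon\mathcal{G}(X,r)\to\mathcal{G}(B,r_B)$ sending $\sigma_x^X$ to $\lambda_x$. To show $\bar\iota$ is bijective, I will first check that $X$ generates $(B,\circ)$: indeed $x_{i,0}=(e_i,0)$ and $(0,j)=x_{0,0}^{-1}\circ x_{0,j}$, and these elements jointly generate $B$ multiplicatively. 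Since $\lambda$ is a group homomorphism and each $\lambda_{x_{kl}}$ preserves $X$ by~(\ref{action1}), the restriction map $\rho\colon\mathcal{G}(B,r_B)\to\mathcal{G}(X,r)$, $\lambda_b\mapsto\lambda_b|_X$, is a well-defined group homomorphism. On generators $\rho\circ\bar\iota(\sigma_x^X)=\sigma_x^X$, so $\bar\iota$ is injective, and $\rho$ is surjective because $\sigma_x^X=\rho(\lambda_x)$. Hence $\bar\iota$ is a brace isomorphism, and Lemma~\ref{known} yields $\mathcal{G}(X,r)\cong\mathcal{G}(B,r_B)\cong B/\soc(B)$.

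For the second step, formula~(\ref{lambdaui}) gives $\lambda_{(u,i)}(v,j)=(\alpha(i)(v),j-b(u,\alpha(i)(v)))$, so $(u,i)\in\soc(B)$ if and only if $\alpha(i)=\id$ and $b(u,v)=0$ for every $v$. Since $\alpha(i)(e_0)=e_i$, the first condition forces $i=0$; and because $\{e_0,\dots,e_{n-1}\}$ spans $(\Z/(n))^n$, the second amounts to $u$ lying in the left radical of $b$, i.e.\ $M(b,\mathcal{B})^T u=0$. Over the finite ring $\Z/(n)$, the endomorphism of $(\Z/(n))^n$ given by $M(b,\mathcal{B})^T$ is injective if and only if it is bijective, so this radical is trivial precisely when $M(b,\mathcal{B})$ is invertible, which is what non-singularity of $b$ means. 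Combining everything, $B\cong\mathcal{G}(X,r)$ if and only if the surjection $B\to B/\soc(B)\cong\mathcal{G}(X,r)$ is an isomorphism, which by finiteness of $B$ happens if and only if $\soc(B)=0$, and this is equivalent to $b$ being non-singular. The only delicate point is the bijectivity of $\bar\iota$, and the key input there is that $X$ multiplicatively (not additively) generates $B$, which is precisely why $\mathcal{G}(X,r)$ captures all of $\mathcal{G}(B,r_B)$.
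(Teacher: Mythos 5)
Your proof is correct and follows essentially the same route as the paper: compute $\soc(B)$ from formula (\ref{lambdaui}) to see it vanishes exactly when $b_{j_0,\dots,j_{n-1}}$ is non-singular, invoke Lemma~\ref{known} to get $B/\soc(B)\cong\mathcal{G}(B,r_B)$, and identify $\mathcal{G}(B,r_B)$ with $\mathcal{G}(X,r_{j_0,\dots,j_{n-1}})$ via the restriction map, which is inverse to the homomorphism induced by the inclusion $X\hookrightarrow B$. The only difference is that you justify the bijectivity of this identification explicitly (by checking that $X$ generates $(B,\circ)$ multiplicatively), a point the paper asserts without detail.
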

    \begin{proof}
        First we will show that $\soc(B_{j_0,\dots ,j_{n-1}})=\{ (0,0)\}$  if and only if $b_{j_0,\dots,j_{n-1}}$ is non-singular.
 Let $(u,i)\in \soc(B_{j_0,\dots ,j_{n-1}})$. By (\ref{lambdaui}) and the definition of $\alpha$, we have that $i=0$ and $b_{j_0,\dots,j_{n-1}}(u,v)=0$,
for all $v\in (\Z/(n))^n$. Thus   $\soc(B_{j_0,\dots ,j_{n-1}})=\{
(0,0)\}$ if and only if $b_{j_0,\dots,j_{n-1}}$ is non-singular.

Let
$(B_{j_0,\dots,j_{n-1}},r_{B_{j_0,\dots,j_{n-1}}})$ be the solution
associated to the left brace $B_{j_0,\dots,j_{n-1}}$, that is,
$r_{B_{j_0,\dots,j_{n-1}}}\colon B_{j_0,\dots,j_{n-1}}\times
B_{j_0,\dots,j_{n-1}}\longrightarrow B_{j_0,\dots,j_{n-1}}\times
B_{j_0,\dots,j_{n-1}}$ is defined by
        $$r_{B_{j_0,\dots,j_{n-1}}}((u,i),(v,j)) = (\lambda^{(j_0,\dots,j_{n-1})}_{(u,i)} (v,j),
        (\lambda^{(j_0,\dots,j_{n-1})}_{\lambda^{(j_0,\dots ,j_{n-1})}_{(u,i)}(v,j)})^{-1} (u,i)),$$
        for all $u,v\in (\Z/(n))^{n}$ and $i,j\in \Z/(n)$.
        By Lemma \ref{known}, $$B_{j_0,\dots,j_{n-1}}/\soc(B_{j_0,\dots,j_{n-1}})\cong\mathcal{G}(B_{j_0,\dots,j_{n-1}},r_{B_{j_0,\dots,j_{n-1}}})$$
        as left braces. Note that the map
       $$\varphi\colon \mathcal{G}(B_{j_0,\dots,j_{n-1}},r_{B_{j_0,\dots,j_{n-1}}})\longrightarrow \mathcal{G}(X,r_{j_0,\dots,j_{n-1}})$$
        defined by
        $\varphi(\lambda^{(j_0,\dots,j_{n-1})}_{(v,j)})= \lambda^{(j_0,\dots,j_{n-1})}_{(v,j)}|_{X}$,
for all $(v,j)\in B_{j_0,\dots,j_{n-1}}$, is an isomorphism of the
multiplicative groups.

Furthermore, the inclusion map
$X\longrightarrow B_{j_0,\dots,j_{n-1}}$ is a homomorphism of
solutions from $(X,r_{j_0,\dots,j_{n-1}})$ to
$(B_{j_0,\dots,j_{n-1}},r_{B_{j_0,\dots,j_{n-1}}})$. Hence it
induces a homomorphism of left braces
$$\mathcal{G}(X,r_{j_0,\dots,j_{n-1}})\longrightarrow
\mathcal{G}(B_{j_0,\dots,j_{n-1}},r_{B_{j_0,\dots,j_{n-1}}}),$$
        which is exactly $\varphi^{-1}$. Therefore the result follows.
    \end{proof}

    \begin{proposition}\label{iso}
        Let $a_0,\dots, a_{n-1},c_0,\dots ,c_{n-1}\in \Z/(n)$ be elements such that $a_i=a_{-i}$ and $c_i=c_{-i}$ for all $i\in \Z/(n)$.
    If there exists an invertible element $a\in \Z/(n)$ such that $aa_i=c_{ai}$ for all $i\in\Z/(n)$, then the solutions $$(X,r_{a_0,\dots,a_{n-1}})\mbox{ and }(X,r_{c_0,\dots,c_{n-1}})$$
        are isomorphic. Furthermore, if $b_{a_0,\dots,a_{n-1}}$ and $b_{c_0,\dots,c_{n-1}}$ are non-singular, then the converse holds.
    \end{proposition}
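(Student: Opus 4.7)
The plan has two parts, one for each direction.

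For the forward direction, I would define $f\colon X\to X$ by $f(x_{ij})=x_{ai,aj}$; this is a bijection because $a$ is a unit in $\Z/(n)$. The verification that $f$ is an isomorphism of solutions is a direct calculation using~(\ref{action1}): computing both $f(\lambda^{(a_0,\dots,a_{n-1})}_{x_{ij}}(x_{kl}))=f(x_{k+j,\,l-a_{k+j-i}})=x_{ak+aj,\,al-a\cdot a_{k+j-i}}$ and $\lambda^{(c_0,\dots,c_{n-1})}_{f(x_{ij})}(f(x_{kl}))=x_{ak+aj,\,al-c_{ak+aj-ai}}$, the hypothesis $a\cdot a_s=c_{as}$ (applied with $s=k+j-i$) makes the two expressions equal.

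For the converse, assume $b_{a_0,\dots,a_{n-1}}$ and $b_{c_0,\dots,c_{n-1}}$ are non-singular and let $\varphi\colon (X,r_{a_0,\dots,a_{n-1}})\to(X,r_{c_0,\dots,c_{n-1}})$ be an isomorphism of solutions. By Proposition~\ref{Bc}, the map $(v,j)\mapsto \lambda_{(v,j)}|_X$ is a brace isomorphism $B_{a_0,\dots,a_{n-1}}\cong\mathcal{G}(X,r_{a_0,\dots,a_{n-1}})$, and similarly for $B_{c_0,\dots,c_{n-1}}$. Since $\varphi$ induces a brace isomorphism $\bar\varphi\colon\mathcal{G}(X,r_{a_0,\dots,a_{n-1}})\to\mathcal{G}(X,r_{c_0,\dots,c_{n-1}})$ sending $\sigma_x\mapsto\sigma_{\varphi(x)}$, composing yields a brace isomorphism $\Phi\colon B_{a_0,\dots,a_{n-1}}\to B_{c_0,\dots,c_{n-1}}$ with $\Phi(x_{ij})=\varphi(x_{ij})$ for every $i,j\in\Z/(n)$.

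The key step is to exploit the additive structure. Using $a_i=a_{-i}$, a direct calculation in the twisted addition of $B_{a_0,\dots,a_{n-1}}$ shows that $x_{ij}=x_{0j}+x_{i0}-x_{00}$ for all $i,j$. Since $\Phi$ is additive, writing $\Phi(x_{ij})=x_{g(i,j),\,h(i,j)}=(e_{g(i,j)},h(i,j))$, and expanding $\Phi(x_{0j})+\Phi(x_{i0})-\Phi(x_{00})$ in $B_{c_0,\dots,c_{n-1}}$ (a bookkeeping exercise with the bilinear form $b_{c_0,\dots,c_{n-1}}$), the first coordinate equation becomes $e_{g(i,j)}=e_{g(0,j)}+e_{g(i,0)}-e_{g(0,0)}$ inside $(\Z/(n))^{n}$. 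For $n>2$, this can only hold if either $g(0,j)=g(0,0)$ (forcing $g(i,j)=g(i,0)$) or $g(i,0)=g(0,0)$ (forcing $g(i,j)=g(0,j)$); a short argument shows that one of these alternatives must hold uniformly in $(i,j)$, so $g$ depends only on $i$ or only on $j$. The second alternative plugged into the intertwining identity $g(k+j,l-a_{k+j-i})=g(k,l)+h(i,j)$ forces $h$ to be constant, contradicting the bijectivity of $\Phi|_X$; hence $g(i,j)=\tilde g(i)$.

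Finally, the second-coordinate part of the same additive identity yields $h(i,j)=h(0,0)+\beta(i)+\gamma(j)$ for suitable $\beta,\gamma$. Feeding this into the intertwining condition $g(k+j,l-a_{k+j-i})=g(k,l)+h(i,j)$ shows successively that $\beta$ is constant, $\tilde g(k)=\tilde g(0)+a'k$ is arithmetic, and $h(i,j)=a'j$; bijectivity of $\tilde g$ forces $a'\in(\Z/(n))^{*}$. Substituting these expressions into the second intertwining condition $h(k+j,l-a_{k+j-i})=h(k,l)-c_{g(k,l)+h(i,j)-g(i,j)}$ collapses (after cancellation) to $a'\cdot a_s=c_{a's}$ for all $s\in\Z/(n)$, giving the required invertible $a=a'$. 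The residual case $n=2$ is trivial: non-singularity of the circulant form forces $\{a_0,a_1\}=\{0,1\}=\{c_0,c_1\}$ (up to order), and the only invertible element is $a=1$. The main obstacle is the case analysis of the equation $e_{g(0,j)}+e_{g(i,0)}-e_{g(0,0)}=e_{g(i,j)}$ in $(\Z/(n))^{n}$, where one has to show globally (not just pointwise) that one of the two alternatives holds, and then rule out the ``$g$ depends only on $j$'' branch using the solution-intertwining identity.
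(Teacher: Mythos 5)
Your forward direction is exactly the paper's: the map $x_{ij}\mapsto x_{ai,aj}$ and the same computation via (\ref{action1}). For the converse you take a genuinely different route through the middle of the argument. Both you and the paper first upgrade the solution isomorphism to a brace isomorphism $\Phi\colon B_{a_0,\dots,a_{n-1}}\to B_{c_0,\dots,c_{n-1}}$ via Proposition~\ref{Bc}, but the paper then works in the common multiplicative group $(\Z/(n))^n\rtimes_\alpha\Z/(n)$: it shows that for $n>2$ the set $\{x_{i,0}\}$ is the unique conjugacy class contained in $X$, hence is preserved by $\Phi$, and from there pins down $\Phi(0,1)=(0,a)$ and $\Phi(x_{ij})=x_{\nu(i),ja}$ before invoking the intertwining relation. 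You instead exploit only the additive structure, via the identity $x_{ij}=x_{0j}+x_{i0}-x_{00}$ (which does hold, using $a_i=a_{-i}$), and a combinatorial analysis of when $e_{g(i,j)}+e_{g(0,0)}=e_{g(0,j)}+e_{g(i,0)}$ can hold in $(\Z/(n))^n$; your global dichotomy argument is sound for $n>2$ (if some $g(i,0)\neq g(0,0)$ and some $g(0,j)\neq g(0,0)$ the multiset equation fails at that pair), and the elimination of the ``$g$ depends only on $j$'' branch and the final extraction of $a'a_s=c_{a's}$ both check out. Your route avoids the conjugacy-class computation at the cost of a slightly longer bookkeeping phase; the paper's route gets the form of $\Phi$ on $X$ more quickly from group theory. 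The one real loose end is $n=2$: both arguments degenerate there (in yours because $e_s+e_t$ no longer determines the multiset $\{s,t\}$ over $\Z/(2)$), and your dismissal of this case does not actually finish it --- after observing $\{a_0,a_1\}=\{c_0,c_1\}=\{0,1\}$ and $a=1$, you still must show that $(X,r_{0,1})$ and $(X,r_{1,0})$ are \emph{not} isomorphic (otherwise the converse would fail for the pair $(a_0,a_1)=(0,1)$, $(c_0,c_1)=(1,0)$). The paper does this by identifying them with the non-isomorphic solutions of Examples~\ref{examp1} and~\ref{examp2}; you should add that verification.
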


    \begin{proof}
        Suppose that there exists an invertible element $a\in \Z/(n)$ such that $aa_{i}=c_{ai}$, for all $i\in\Z/(n)$.
        We define $h\colon X\longrightarrow X$ by $h(x_{ij})=x_{ai,aj}$, for all $i,j\in\Z/(n)$. We have
        $$h(\lambda^{(a_0,\dots,a_{n-1})}_{x_{i,j}}(x_{k,l}))=h(x_{k+j,l-a_{k+j-i}})=x_{a(k+j),a(l-a_{k+j-i})}$$
        and
        \begin{eqnarray*}\lambda^{(c_0,\dots,c_{n-1})}_{h(x_{i,j})}(h(x_{k,l}))
            &=&\lambda^{(c_0,\dots,c_{n-1})}_{x_{ai,aj}}(x_{ak,al})\\
            &=&x_{ak+aj, al-c_{ak+aj-ai}}=x_{a(k+j),a(l-a_{k+j-i})},
        \end{eqnarray*}
        for all $i,j,k,l\in \Z/(n)$. Therefore $h$ is an isomorphism of solutions from $(X,r_{a_0,\dots,a_{n-1}})$ to $(X,r_{c_0,\dots,c_{n-1}})$.

        Suppose that $b_{a_0,\dots,a_{n-1}}$ and $b_{c_0,\dots,c_{n-1}}$ are non-singular,
    and $(X,r_{a_0,\dots,a_{n-1}})$ and $(X,r_{c_0,\dots,c_{n-1}})$ are isomorphic.
Let $g\colon X\longrightarrow X$ be an isomorphism of solutions from
$(X,r_{a_0,\dots,a_{n-1}})$ to $(X,r_{c_0,\dots,c_{n-1}})$. Then $g$
induces an isomorphism of left braces
        $$\tilde{g}\colon\mathcal{G}(X,r_{a_0,\dots ,a_{n-1}}) \longrightarrow\mathcal{G}(X,r_{c_0,\dots,c_{n-1}}),$$
        such that $\tilde{g}(\lambda^{(a_0,\dots,a_{n-1})}_{x_{i,j}})=\lambda^{(c_0,\dots,c_{n-1})}_{g(x_{i,j})}$,
for all $x_{i,j}\in X$. Since $b_{a_0,\dots,a_{n-1}}$ and
$b_{c_0,\dots,c_{n-1}}$ are non-singular, by Proposition
\ref{Bc}, and its proof $g$ induces an isomorphism  of left
braces from $B_{a_0,\dots,a_{n-1}}$ to $B_{c_0,\dots,c_{n-1}}$
that we also denote by $g$. Note the multiplicative groups of
$B_{a_0,\dots ,a_{n-1}}$ and $B_{c_0,\dots,c_{n-1}}$ are the same,
that is $(\Z/(n))^n\rtimes_{\alpha}\Z/(n)$. In particular, $g$ is
an automorphism of this group. In this group we have
        $$(u,k)x_{i,0}(u,k)^{-1}=(u,k)(e_i,0)(-\alpha(-k)(u),-k)=(e_{i+k},0)=x_{i+k,0},$$
        for all $u\in (\Z/(n))^n$ and $i,k\in \Z/(n)$.
Hence $\{ x_{i,0}\mid i\in \Z/(n)\}$ is a conjugacy class contained
in $X$. We shall see that if $n>2$, then this is the only conjugacy
class of $(\Z/(n))^n\rtimes_{\alpha}\Z/(n)$ contained in $X$. So,
assume that $n>2$. Let $j\in \Z/(n)$ be a nonzero element. We have
        \begin{eqnarray*}x_{i+j,j}x_{i,j}x_{i+j,j}^{-1}&=&(e_{i+j},j)(e_i,j)(-e_{i},-j)\\
            &=&(2e_{i+j},2j)(-e_{i},-j)\\
            &=&(2e_{i+j}-e_{i+2j},j)\notin X,
        \end{eqnarray*}
        for all $i\in \Z/(n)$, because $2\neq 0$ in $\Z/(n)$.
Hence $\{ x_{i,0}\mid i\in \Z/(n)\}$ indeed is the only conjugacy
class of $(\Z/(n))^n\rtimes_{\alpha}\Z/(n)$ contained in $X$. Since
$g(X)=X$ and $g(C)$ is a conjugacy class of
$(\Z/(n))^n\rtimes_{\alpha}\Z/(n)$, for every conjugacy class $C$,
we have that $g(\{ x_{i,0}\mid i\in \Z/(n)\})\subseteq X$ is a
conjugacy class of $(\Z/(n))^n\rtimes_{\alpha}\Z/(n)$. Thus, we have
that
        $$g(\{x_{i,0}\mid i\in\Z/(n)\})=\{x_{i,0}\mid i\in\Z/(n)\}.$$
        Hence there exists $\nu\in\Sym_{\Z/(n)}$ such that $g(x_{i,0})=x_{\nu(i),0}$, for all $i\in \Z/(n)$.
Let $g(0,1)=(u,a)$, for some $u\in(\Z/(n))^{n}$ and $a\in \Z/(n)$.
Let $g(x_{1,1})=x_{k,l}$. Since
$x_{1,0}^{-1}x_{1,1}=(-e_1,0)(e_1,1)=(0,1)$ in
$B_{a_0,\dots,a_{n-1}}$, we have that
$(u,a)=g(x_{1,0})^{-1}g(x_{1,1})=x_{\nu(1),0}^{-1}x_{k,l}=(e_{k}-e_{\nu(1)},l)$
in $B_{c_0,\dots,c_{n-1}}$. Hence $u=e_k-e_{\nu(1)}$ and $a=l$.
Note that
        \begin{eqnarray*}g(x_{2,1})&=&g((e_{2},0)+(0,1))=g(e_{2},0)+g(0,1)\\
            &=&g(x_{2,0})+g(0,1)=x_{\nu(2),0}+(e_k-e_{\nu(1)},a)\\
            &=&(e_{\nu(2)}+e_k-e_{\nu(1)},a+b_{c_0,\dots,c_{n-1}}(e_{\nu(2)},e_k-e_{\nu(1)}))\in X.
        \end{eqnarray*}
        Hence $k=\nu(1)$, and thus $g(0,1)=(0,a)$ and
        \begin{eqnarray*}g(x_{i,j})&=&g((e_i,0)+j(0,1))=g(e_i,0)+jg(0,1)=g(x_{i,0})+j(0,a)\\
            &=&x_{\nu(i),0}+(0,ja)=x_{\nu(i),ja},
        \end{eqnarray*}
        for all $i,j\in\Z/(n)$.
        Since the additive order of $(0,1)$ in $B_{a_0,\dots,a_{n-1}}$ is $n$,
        the additive order of $(0,a)=g(0,1)$ in $B_{c_0,\dots,c_{n-1}}$ is $n$. Thus $a\in\Z/(n)$ is invertible.  Now, by (\ref{action1}),
        $$g(\lambda^{(a_0,\dots ,a_{n-1})}_{x_{i,j}}(x_{k,l}))=g(x_{k+j,l-a_{k+j-i}})=x_{\nu(k+j),a(l-a_{k+j-i})}$$
        and
        $$\lambda^{(c_0,\dots,c_{n-1})}_{g(x_{i,j})}(g(x_{k,l}))
        =\lambda^{(c_0,\dots,c_{n-1})}_{x_{\nu(i),aj}}(x_{\nu(k),al})=x_{\nu(k)+aj, al-c_{\nu(k)+aj-\nu(i)}}.$$
        Since $g(\lambda^{(a_0,\dots,a_{n-1})}_{x_{i,j}}(x_{k,l}))=\lambda^{(c_0,\dots,c_{n-1})}_{g(x_{i,j})}(g(x_{k,l}))$, we have that  $\nu(k+j)=\nu(k)+aj$, for all $k,j\in\Z/(n)$ and
        $$aa_{k+j-i}=c_{\nu(k)+aj-\nu(i)}=c_{\nu(i+k-i)+aj-\nu(i)}=c_{a(k-i)+aj},$$
        as desired.

        Finally, for $n=2$, there are only two solutions  $(X,r_{0,1})$ and $(X,r_{1,0})$ with $b_{0,1}$ and $b_{1,0}$ non-singular.
The solution $(X,r_{0,1})$ is isomorphic to the solution of Example
\ref{examp1}, and the solution $(X,r_{1,0})$ is isomorphic to the
solution of Example \ref{examp2}. Hence $(X,r_{0,1})$ and
$(X,r_{1,0})$ are not isomorphic.

        Thus the result follows.
    \end{proof}

\section{Open Questions} \label{sec7}
To conclude, we propose some open questions related to the
results obtained in the previous sections.

In Sections \ref{examples}, \ref{sec5} and \ref{section6} we study
finite indecomposable and irretractable solutions $(X,r)$ of the
YBE, where $X=Y\times Z$, the $X_y=\{ (y,z)\mid z\in Z\}$, for
$y\in Y$, are blocks of imprimitivity for the action of
$\mathcal{G}(X,r)$ on $X$, and
$r((y,z),(y',z'))=(\sigma_{(y,z)}(y',z'),\sigma^{-1}_{\sigma_{(y,z)}(y',z')}(y,z))$,
where
$\sigma_{(y,z)}(y',z')=(\sigma_z(y'),d_{y,\sigma_z(y')}(z'))$, for
some permutations $\sigma_z\in\Sym_Y$ and $d_{y,y'}\in\Sym_{Z}$,
for all $y,y'\in Y$ and $z,z'\in Z$.

\begin{question}\label{quest1}
    Let $Y,Z$ be finite sets of cardinality $>1$. Let $X=Y\times Z$. Study the indecomposable
    and irretractable solutions $(X,r)$, where the sets $X_y=\{ (y,z)\mid z\in Z\}$, for $y\in Y$,
    are blocks of imprimitivity for the action of $\mathcal{G}(X,r)$ on $X$, and
    $r((y,z),(y',z'))=(\sigma_{(y,z)}(y',z'),\sigma^{-1}_{\sigma_{(y,z)}(y',z')}(y,z))$, where
    $\sigma_{(y,z)}(y',z')=(\beta_{y,z}(y'),\alpha_{y,z,y'}(z'))$, for some permutations
    $\beta_{y,z}\in\Sym_Y$ and $\alpha_{y,z,y'}\in\Sym_{Z}$, for all $y,y'\in Y$ and $z,z'\in Z$.
\end{question}

\begin{question}
    With the same notation as in Question \ref{quest1}, assume that $|Y|=|Z|=p$ is prime.
    Suppose that $(X,r)$ is an indecomposable and irretractable solution of the YBE.
    Is it true that $\beta_{y,z}=\beta_{y',z}$, for all $y,y'\in Y$ and all $z\in Z$?
    Is it true that $\alpha_{y,z,y'}=\alpha_{y,z',y''}$ if and only if $\beta_{y,z}(y')=\beta_{y,z'}(y'')$?
\end{question}

\begin{question}
    Let $p$ be a prime. Let $(X,r)$ be an indecomposable and irretractable solution of the YBE of cardinality $p^2$.
    Is $(X,r)$ simple? Is it isomorphic to one of the simple solutions constructed in Theorem~\ref{newexample}?
\end{question}

\begin{question}
    Consider the indecomposable and irretractable solutions constructed in Theorem \ref{newexample2bis}.
    Can they be constructed using the asymmetric product of left braces?
\end{question}

Note that for $t=1$ we give such a construction in Section \ref{section6}.

\begin{question}
    Does there exist a simple solution $(X,r)$ of the YBE
    such that $|X|=p_1p_2\cdots p_n$, for $n>1$ distinct primes $p_1,p_2,\dots
    ,p_n$?
\end{question}
 Note that if $p_1,p_2,\dots, p_n$ are $n$ distinct primes, $k_1,\dots ,k_n$ are positive
 integers and $n>1$, then by Remark~\ref{413},
 there exist simple solutions of the YBE of cardinality $p_1^{k_1}p_2^{k_2}\cdots p_n^{k_n}$
 if $\sum_{i=1}^nk_{i}>n$.

 Note also that the finite simple solutions $(X,r)$ of the YBE, with $X=Y\times Z$,
 constructed in Sections \ref{examples}, \ref{sec5} and \ref{section6} satisfy that
 $1<|Y|,|Z|$ and $|Z|$ is a divisor of $|Y|$. Thus it seems natural to ask the following
 question.

\begin{question}
    Does there exist a finite simple solution $(X,r)$ of the YBE such that $X=Y\times Z$,
    the sets $X_y=\{(y,z)\mid z\in Z\}$, for $y\in Y$,
    are blocks of imprimitivity for the action of $\mathcal{G}(X,r)$ on $X$, and $|Z|$ is not a divisor of $|Y|$?
\end{question}

\section*{Acknowledgements}

The authors thank Eric Jespers for his comments and suggestions,
and Leandro Vendramin for the information about the indecomposable
and irretractable solutions of cardinality $9$.

\vspace{30pt}
 \noindent \begin{tabular}{llllllll}
  F. Ced\'o && J. Okni\'{n}ski \\
 Departament de Matem\`atiques &&  Institute of
Mathematics \\
 Universitat Aut\`onoma de Barcelona &&   Warsaw University \\
08193 Bellaterra (Barcelona), Spain    &&  Banacha 2, 02-097 Warsaw, Poland \\
 cedo@mat.uab.cat && okninski@mimuw.edu.pl\\

\end{tabular}

\end{document}